\newcommand{\Z}{{\mathbb{Z}}}
\newcommand{\heta}{\hat{\eta}}
\newcommand{\et}{\mathrm{\acute{e}t}}
\newcommand{\ok}{\bar{k}}
\newcommand{\Xok}{X_{\ok}}
\newcommand{\pro}{\mathrm{pro}}
\newcommand{\Aut}{\mathrm{Aut}}
\newcommand{\colim}{\operatorname*{colim}}
\newcommand{\hocolim}{\operatorname*{hocolim}}
\newcommand{\cosk}{\mathrm{cosk}}
\newcommand{\hEt}{\hat{\mathrm{Et}}\,}
\newcommand{\compl}{\hat{(\cdot)}}
\newcommand{\Gal}{\mathrm{Gal}}
\newcommand{\Hom}{\mathrm{Hom}}
\newcommand{\Ker}{\mathrm{Ker}\,}
\newcommand{\sk}{\mathrm{sk}}
\newcommand{\Ch}{{\mathcal C}}
\newcommand{\Eh}{{\mathcal E}}
\newcommand{\hEh}{\hat{\mathcal E}}
\newcommand{\Fh}{{\mathcal F}}
\newcommand{\Hh}{{\mathcal H}}
\newcommand{\Hhf}{{\mathcal H}_{0,\mathrm{fin}}}
\newcommand{\hHh}{\hat{{\mathcal H}}}
\newcommand{\hHhg}{\hat{{\mathcal H}}_G}
\newcommand{\Mh}{{\mathcal M}}
\newcommand{\Ph}{\mathcal{P}}
\newcommand{\Rh}{{\mathcal R}}
\newcommand{\Sh}{{\mathcal S}}
\newcommand{\Shp}{{\mathcal S}_{\ast}}
\newcommand{\hSh}{\hat{\mathcal S}}
\newcommand{\hShg}{\hat{\mathcal S}_G}
\newcommand{\hShp}{\hat{\mathcal S}_{\ast}}
\newtheorem{theorem}{Theorem}[section]
\newtheorem{lemma}[theorem]{Lemma}
\newtheorem{prop}[theorem]{Proposition}
\newtheorem{cor}[theorem]{Corollary}
\theoremstyle{definition}
\newtheorem{example}[theorem]{Example}
\newtheorem{remark}[theorem]{Remark}
\newtheorem{defn}{Definition}\numberwithin{defn}{section}
\title[Some remarks on profinite completion]
{Some remarks on profinite completion of spaces}
\author[Gereon Quick]{Gereon Quick}
\thanks{Supported by German Research Foundation (DFG) Fellowship QU 317/1}
\address{Department of Mathematics, Harvard University, Cambridge, MA 02138, USA}
\email{gquick@math.harvard.edu}
\subjclass[2000]{Primary 55P60; Secondary 14F35, 55Q70}
\keywords{Profinite homotopy; profinite completion; equivariant completion}
\begin{document}
\begin{abstract}
We study profinite completion of spaces in the model category of profinite spaces and construct a rigidification of the completion functors of Artin-Mazur and Sullivan which extends also to non-connected spaces. Another new aspect is an equivariant profinite completion functor and equivariant  fibrant replacement functor for a profinite group acting on a space. This is crucial for applications where, for example, Galois groups are involved, or for profinite Teichm\"uller theory where equivariant completions are applied. Along the way we collect and survey the most important known results about profinite completion of spaces.
\end{abstract}
\maketitle
\section{Introduction}
The use of profinite methods in homotopy theory arose in the context of applications to arithmetic and algebraic geometry in the work of Artin and Mazur \cite{artinmazur}, where they introduced an \'etale pro-homotopy type for schemes. Their motivation was to define \'etale topological invariants for schemes. It turned out that for applications a profinitely completed version of the \'etale type is more suitable or even necessary. Artin and Mazur defined the profinite completion $X\to \hat{X}$ of a connected space $X$ as the universal map from $X$ to objects in pro-$\Hh_{0, \mathrm{fin}}$, the pro-category of the homotopy category $\Hh_{0, \mathrm{fin}}$ of connected finite spaces, i.e. spaces $X$ whose homotopy groups $\pi_nX$ are finite for all $n\geq 0$ and are even trivial for almost all $n$. Artin and Mazur constructed $\hat{X}$ as a pro-object in the homotopy category of spaces.\\
In his fundamental work on homotopy theory and the Adams conjecture \cite{sullivan}, Sullivan revisited these methods and showed that this profinite completion as a pro-space can in fact be realized in the homotopy category of spaces $\Hh$ itself by proving that the Artin-Mazur pro-object  admits a homotopy limit in $\Hh$.\footnote{Sullivan's comment on page 2 of \cite{sullivan}: "... we had to domesticate the abstract beasts of \cite{artinmazur} to make them usable in ordinary algebraic topology."}
Later on, Rector rigidified the definition of Artin and Mazur and constructed a profinite completion of a connected space as a pro-object in the category of connected finite spaces.\\
By introducing a more suitable category, Morel opened a whole new perspective on the question. He significantly improved the previous results for the pro-$p$-completion of spaces based on a pro-$p$-model structure for any fixed prime $p$. He considered in \cite{ensprofin} the category $\hSh$ of simplicial objects of profinite sets, called profinite spaces. Profinite completion of sets induces a completion functor from $\Sh$ to $\hSh$. In order to obtain a pro-$p$-completion functor that is homotopy invariant and generalizes Artin-Mazur $p$-completion, Morel equipped $\hSh$ with a model structure in which weak equivalences are maps that induce isomorphisms in continuous $\Z/p$-cohomology. One of the main results of \cite{ensprofin} is that every profinite space $X$ is weakly equivalent in $\hSh$ to a limit of finite-$p$-spaces, i.e. spaces which have only a finite number of nontrivial homotopy groups each of which is a finite $p$-group. After composition with the functor $\Sh \to \hSh$, this rigidifies the constructions of Artin-Mazur and Sullivan for pro-$p$-completion and also generalizes the results of Rector in \cite{rector} to non-connected spaces. \\
The purpose of this paper is to generalize these ideas to the full profinite completion for non-connected spaces on the basis of a different model structure on $\hSh$. Moreover, we extend the results to profinite spaces with a continuous action by a profinite group.\\
In \cite{profinhom}, a model structure on $\hSh$ has been constructed in which weak equivalences are maps that induce isomorphisms on profinite fundamental groups and continuous cohomology with finite local coefficients. The fibrant replacement functor in this structure has not been made explicit in \cite{profinhom}. The first goal of this paper is to generalize the idea of Morel and Rector to construct an explicit fibrant replacement and thereby to show that every profinite space is weakly equivalent in $\hSh$ to a limit of finite spaces in the above sense. Composition with the functor $\Sh \to \hSh$ gives a rigidification and generalization of profinite completion of spaces.\\
We remark that although we do not treat these cases in this paper, the methods of \cite{profinhom} do not only work for the class of all finite groups but also for any choice of subclass of finite groups. The construction in this paper is also applicable for any such subclass. For example one could choose a set $L$ of primes and define a pro-$L$-model structure on $\hSh$. This would yield a rigid pro-$L$-finite completion functor, again generalizing Rector's approach. Note that the case of pro- $p$-completion is special in this context. The construction of the $\Z/p$-fibrant replacement in \cite{ensprofin} is based on the Bousfield-Kan $\Z/p$-completion of \cite{bouskan}. We do not know of any way to use this approach in the more general case of profinite or pro-$L$-completion if $L$ contains more than one prime. A very interesting comparison of pro-$p$- and Bousfield-Kan $\Z/p$-completion is given in \cite{goersscomp}.\\
Now let $G$ be a profinite group and let $\hShg$ be the category of simplicial objects in the category of profinite sets with a continuous $G$-action. We call the objects of $\hShg$ profinite $G$-spaces. In \cite{gspaces}, a model structure on $\hShg$ has been defined which is based on the underlying model structure on $\hSh$.\\
Such profinite $G$-spaces occur naturally when we look at the \'etale topological type of a scheme $X$ defined over a field $k$. Let $\ok$ be a separable closure of $k$ and $\Xok$ the base change of $X$ to $\ok$. The absolute Galois group $G=\Gal(\ok/k)$ acts by functoriality on the completed version $\hEt \Xok$ of the \'etale type of $\Xok$ and, with a little care, $\hEt \Xok$ can be viewed as an object in $\hShg$. \\ 
One of the fundamental operations for group actions is taking fixed points. Since taking fixed points is not homotopy invariant, one also considers the invariant version called homotopy fixed points. For a profinite $G$-space $X$, one would like to remember the continuity of the $G$-action. Therefore, one defines the continuous homotopy fixed point space $X^{hG}$ of $X$ to be the $G$-fixed points of the mapping space of continuous maps from $EG$ to $R_GX$, where $EG$ denotes a contractible profinite space with a free $G$-action and $R_GX$ is a fibrant replacement of $X$ in $\hShg$. Many mathematical problems involving a group action can be formulated in terms of homotopy fixed points. Hence to understand this fibrant replacement is of fundamental importance for continuous group actions.\\ 
The main result of this paper is that there is an explicit fibrant replacement functor for $\hShg$ based on the fibrant replacement functor for $\hSh$. 
In particular, we obtain that every profinite $G$-space is weakly equivalent in $\hShg$ to a limit of  continuous $G$-spaces each of which is also a finite space in the usual sense. \\ 
Moreover, let $|G|$ be the underlying abstract group of $G$ and let $\Sh_{|G|}$ be the category of simplicial objects in the category of $|G|$-sets. We will define a $G$-equivariant completion functor from $\Sh_{|G|}$ to $\hShg$. Then the explicit fibrant replacement functor in $\hShg$ will provide a $G$-equivariant rigid profinite completion functor for $|G|$-spaces extending the completion functors for spaces.\\
We would like to mention two interesting applications of profinite $G$-spaces in algebraic geometry that motivated this paper. Firstly, Boggi has formulated in \cite{boggi} and \cite{boggi2} some central conjectures in profinite Teichm\"uller theory using profinite spaces, profinite $G$-spaces and a slightly different $G$-equivariant completion functor. This has been further studied and reviewed by Lochak in \cite{lochak}. Since these conjectures are stated in terms of homotopy theoretical problems, an explicit fibrant replacement functor for profinite $G$-spaces seems crucial for this approach. For, the set-theoretic $G$-completion functor from $G$-spaces to profinite $G$-spaces just yields an object in $\hShg$. But if we want to understand the homotopy type of this object, we need a rigid and homotopy invariant $G$-completion. This is provided by the fibrant replacement functor in $\hShg$.\\ 
Secondly, continuous homotopy fixed points have been used in \cite{gspaces} to reinterpret the map of Grothendieck's section conjecture. Let $k$ be a number field $k$ and let $X$ be a smooth projective curve of genus at least $2$ over $k$. There is a short exact sequence of \'etale fundamental groups
$$
1 \to \pi_1^{\et}\Xok \to \pi^{\et}_1X\to \Gal(\ok/k) \to 1.
$$
Grothendieck's conjecture predicts that the map from the set of $k$-rational points of $X$ to the set of sections $\Gal(\ok/k) \to \pi^{\et}_1X$ up to conjugation by the action of $\pi_1^{\et} \Xok$ is a bijection. By generalizing a result of Cox, it has been shown in \cite{gspaces} that the latter set of sections is in fact in bijection with the set $\pi_0((\hEt \Xok)^{h\Gal(\ok/k)})$ of connected components of the continuous homotopy fixed points of $\hEt \Xok$ under the Galois action. If there is any hope to obtain some new information about the map of the section conjecture via this approach, one has to understand the shape of the $\Gal(\ok/k)$-equivariant fibrant replacement of $\hEt \Xok$ in $\hShg$.\\
Let us quickly outline the content of the paper. In the next section we introduce profinite spaces with the model structure on $\hSh$. We study the various types of profinite completion of spaces and construct the explicit fibrant replacement functor in $\hSh$. Along this way, we resume the relation between profinite completion of groups and spaces and give a survey of known results in the setting of profinite spaces.\\ 
Then we study profinite $G$-spaces under the action of a profinite group $G$. We introduce a $G$-equivariant completion functor from $G$-spaces and how it can be simplified when either $G$ satisfies the property of strong completeness, e.g. when $G$ is topologically finitely generated, or when the action is discrete. Finally, we explain how fibrant replacement functor in $\hSh$ yields a fibrant replacement in $\hShg$. \\ 
{\bf Acknowledgements}. I would like to express my gratitude to the organizers of the Kyoto conferences on Galois-Teichm\"uller theory and Arithmetic Geometry where I had the great opportunity to learn about profinite Teichm\"uller theory and related ideas. I'm especially grateful to Pierre Lochak for drawing my attention to this subject and many interesting discussions. I would also like to thank Mike Hopkins for helpful conversations.
%
%
\section{Spaces and profinite spaces}
The homotopy category of topological spaces has a good combinatorial model provided by simplicial sets. It is the category, denoted by $\Sh$, of simplicial objects in the category of sets. An object of $\Sh$ will be called a space. Instead of just looking at sets, one could consider simplicial objects in categories of sets with additional structure, e.g. simplicial objects in the category of groups or pro-$p$-groups etc. In this paper, we will study a topological condition and consider simplicial objects in the category of profinite sets with its limit topology. This category has first been studied by Morel in \cite{ensprofin}.\\ 
Let $\Eh$ denote the category of sets and let $\Fh$ be the full subcategory of finite sets. Let $\hEh$ be the category of compact Hausdorff and totally disconnected topological spaces. We may identify $\Fh$ with a full subcategory of $\hEh$ in the obvious way. The limit functor $\lim$: pro-$\Fh \to \hEh$ is an equivalence of categories. Moreover, the forgetful functor $\hEh \to \Eh$ admits a left adjoint $\compl:\Eh \to \hEh$ which is called profinite completion. For a set $X$, its profinite completion is defined as follows. Let $\Rh(X)$ be the set of equivalence relations on $X$ such that $X/R$ is a finite set. The set $\Rh(X)$ is ordered by inclusion. The profinite completion of $X$ is defined as the limit of the finite sets $X/R$ over all $R\in \Rh(X)$, i.e. $\hat{X}:=\lim_{R\in \Rh(X)} X/R$.\\ 
We denote by $\hSh$ the category of simplicial profinite sets, i.e. simplicial objects in $\hEh$. The objects of $\hSh$ will be called {\em profinite spaces} and will be our main object of study. The reader should note that, although $\hEh$ is equivalent to pro-$\Fh$, $\hSh$ is not equivalent to the pro-category of simplicial finite sets. A careful treatement of such phenomena is given in \cite{calclim}. 
If $X$ is a profinite space, we can decompose it as a limit of simplicial finite sets. We define the set $\Rh(X)$ of simplicial \emph{open} equivalence relations on $X$. An element $R$ of $\Rh(X)$ is a simplicial profinite subset of the product $X\times X$ such that, in each degree $n$, $R_n$ is an equivalence relation on $X_n$ and an open subset of $X_n\times X_n$. It is ordered by inclusion. For every element $R$ of $\Rh(X)$, the quotient $X/R$ is a simplicial finite set and the map $X \to X/R$ is a map of profinite spaces. The canonical map $X \to \lim_{R\in \Rh(X)} X/R$ is an isomorphism in $\hSh$, cf. \cite{ensprofin}, Lemme 1.\\
The completion of sets induces a functor $\compl:\Sh \to \hSh$, which is also called profinite completion. For a space $Z$, its profinite completion can be described as follows. Let $\Rh(Z)$ be the set of all simplicial equivalence relations $R$ on $Z$ such that the quotient $Z/R$ is a simplicial finite set, i.e. each $Z_n/R_n$ is a finite set for $n\geq 0$. Then $\Rh(Z)$ is again ordered by inclusion. The profinite completion $\hat{Z}$ of $Z$ is defined as the limit of the $Z/R$ for all $R\in \Rh(Z)$, i.e. $\hat{Z} := \lim_{R\in \Rh(Z)} Z/R$. Profinite completion of spaces is again left adjoint to the forgetful functor $|\cdot|:\hSh \to \Sh$ which sends a profinite space to its underlying simplicial set. \\
The category $\hSh$ can be equipped with different interesting model structures. Morel was the first to define a model structure based on continuous $\Z/p$-cohomology in \cite{ensprofin}. We will use another model structure that has been defined in \cite{profinhom} and that we will introduce now.\\
Let $X$ be a profinite space and let $\pi$ be a topological abelian group. The continuous cohomology $H_{\mathrm{cts}}^{\ast}(X;\pi)$ of $X$ with coefficients in $\pi$ is defined as the cohomology of the complex $C_{\mathrm{cts}}^{\ast}(X;\pi)$ of continuous cochains of $X$ with values in $\pi$, i.e. $C_{\mathrm{cts}}^n(X;\pi)$ denotes the set $\Hom_{\hat{\Eh}}(X_n,\pi)$ of continuous maps $\alpha:X_n \to \pi$ and the differentials $\delta^n:C_{\mathrm{cts}}^n(X;\pi)\to C_{\mathrm{cts}}^{n+1}(X;\pi)$ are the morphisms associating to $\alpha$ the map $\sum_{i=0}^{n+1}\alpha \circ d_i$, where $d_i$ denotes the $i$th face map of $X$. If $\pi$ is a finite abelian group and $Z$ a simplicial set, then the cohomologies $H^{\ast}(Z;\pi)$ and $H_{\mathrm{cts}}^{\ast}(\hat{Z};\pi)$ are canonically isomorphic by adjointness of profinite completion of sets and forgetful functor, cf. \cite{ensprofin}. \\
If $\Gamma$ is an arbitrary profinite group, we may still define the first cohomology of $X$ with coefficients in $\Gamma$ as done by Morel in \cite{ensprofin}, p. 355. The functor $X\mapsto \Hom_{\hEh}(X_0,\Gamma)$
is represented in $\hSh$ by a profinite space $E\Gamma$ given in degree $n$ by $E\Gamma_n=\Gamma^{n+1}$, the $n+1$-fold product of $\Gamma$. We define the $1$-cocycles $Z_{\mathrm{cts}}^1(X;\Gamma)$ to be the set of continuous maps $f:X_1 \to \Gamma$ such that $f(d_0x)f(d_2x)=f(d_1x)$ for every $x \in X_1$. The functor $X\mapsto Z_{\mathrm{cts}}^1(X;\Gamma)$ is represented by a profinite space $B\Gamma=E\Gamma/\Gamma$. Furthermore, there is a map $\delta:\Hom_{\hSh}(X,E\Gamma) \to Z_{\mathrm{cts}}^1(X;\Gamma)\cong \Hom_{\hSh}(X,B\Gamma)$ which sends $f:X_0 \to \Gamma$ to the $1$-cocycle $x\mapsto \delta f(x)=f(d_0x)f(d_1x)^{-1}$. We denote by $B_{\mathrm{cts}}^1(X;\Gamma)$ the image of $\delta$ in $Z_{\mathrm{cts}}^1(X;\Gamma)$ and we define the pointed set $H_{\mathrm{cts}}^1(X,\Gamma)$ to be the quotient $Z_{\mathrm{cts}}^1(X;\Gamma)/B_{\mathrm{cts}}^1(X;\Gamma)$. Finally, we define $\pi_0X$ to be the coequalizer in $\hEh$ of the diagram $d_0,d_1:X_1 \rightrightarrows X_0$.\\
The profinite fundamental group of $X$ is defined via covering spaces in the spirit of Grothendieck, see \cite{profinhom}. There is a universal profinite covering space $(\tilde{X},x)$ of $X$ at a vertex $x \in X_0$. Then $\pi_1(X,x)$ is defined to be the group of automorphisms of $(\tilde{X},x)$ over $(X,x)$. It has a natural structure of a profinite group as the limit of the finite automorphism groups of the finite Galois coverings of $(X,x)$.\\ 
The collection of vertices and morphisms between covering spaces over different vertices of $X$ defines a profinite fundamental groupoid $\Pi X$.\footnote{We will discuss the notion of profinite groupoids in the third section of the paper, in particular \S3.3 and \S3.4. The reader could jump to this section for precise definitions.} A profinite local coefficient system $\Mh$ on $X$ is a functor from $\Pi X$ to profinite abelian groups such that the action of $\pi_1(X,x)$ on $\Mh(x)$ is continuous. The cohomology of $X$ with coefficients in $\Mh$ is then defined as the cohomology of the complex $\hom_{\Pi X}((\tilde{X},-),\Mh)$ of continuous natural transformations. For any further details, we refer the reader to \cite{profinhom}. 
\begin{defn}\label{defnwe}
A morphism $f:X\to Y$ in $\hSh$ is called,\\
{\rm (1)} a {\em weak equivalence} if the induced map $f_{\ast}:\pi_0(X) \to \pi_0(Y)$ is an isomorphism of profinite sets, $f_{\ast}:\pi_1(X,x) \to \pi_1(Y,f(x))$ is an isomorphism of profinite groups for every vertex $x\in X_0$ and $f^{\ast}:H_{\mathrm{cts}}^q(Y;\Mh) \to H_{\mathrm{cts}}^q(X;f^{\ast}\Mh)$ is an isomorphism for every local coefficient system $\Mh$ of finite abelian groups on $Y$ for every $q\geq 0$;\\
{\rm (2)} a {\em cofibration} if $f$ is a level-wise monomorphism;\\
{\rm (3)} a {\em fibration} if it has the right lifting property with respect to every cofibration that is also a weak equivalence. 
\end{defn}
The following theorem has been stated in \cite{profinhom} (with the slight correction that in \cite{profinhom} the generating sets of fibrations and cofibrations had been chosen too small; the revised proof is given in \cite{gspaces}, Theorem 2.3). 
\begin{theorem}\label{modelstructure}
The above defined classes of weak equivalences, cofibrations and fibrations provide $\hSh$ with the structure of a fibrantly generated left proper simplicial model category. We denote the homotopy category by $\hHh$.
\end{theorem}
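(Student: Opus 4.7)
The plan is to verify the model category axioms by exhibiting explicit generating sets of fibrations and trivial fibrations and running a cosmall (dual) object argument, following the standard playbook adapted to the profinite setting. First I would fix candidate generating sets: for fibrations, the obvious choice is the collection of maps $E\Gamma \to B\Gamma$ and $K(\Mh,n) \to K(\Mh,n-1) \times_{\{*\}} *$-type path fibrations, together with maps of profinite $\pi_0$-sets, indexed over finite groups $\Gamma$ and finite abelian coefficient groups $\Mh$; for trivial fibrations, the analogous contractible-target variants. The idea is that a map $f\colon X \to Y$ has the RLP against all cofibrations (level-wise monomorphisms) if and only if it induces surjections on cohomology cochains, $\pi_1$-cocycles and $\pi_0$, which is exactly the cohomological/set-theoretic formulation one wants.

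The main steps are then the following. (i) Verify the retract and 2-out-of-3 closure of weak equivalences; this is automatic from the definition since $\pi_0$, $\pi_1$, and $H^*_{\mathrm{cts}}$ are functorial and respect isomorphisms. (ii) Show the class of cofibrations (level-wise monos) is closed under pushout, transfinite composition, and retracts, which is formal because monomorphisms in $\hEh$ are stable under these operations. (iii) Produce the two factorizations. For a map $f\colon X\to Y$, factor it into a trivial cofibration followed by a fibration, and a cofibration followed by a trivial fibration, by iterating the construction that solves lifting problems against the chosen generating sets. Because $\hSh$ is a category of simplicial objects in a category equivalent to $\mathrm{pro}\text{-}\Fh$, a cosmall object argument applies: every object is a cofiltered limit of finite ones, and the generators $B\Gamma, K(\Mh,n)$ etc.\ are cosmall with respect to level-wise monos, so the factorizations terminate.

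Left properness requires that for a cofibration $i\colon A \hookrightarrow B$ and a weak equivalence $w\colon A \to A'$, the pushout $B \to B\cup_A A'$ remains a weak equivalence. On cohomology with finite local coefficients this follows from a Mayer--Vietoris/excision argument: since $i$ is level-wise injective, the pushout decomposition gives a long exact sequence of $H^*_{\mathrm{cts}}(-;\Mh)$, and naturality with $w^*$ an isomorphism yields the conclusion by the five lemma. For $\pi_0$ and $\pi_1$, one uses the corresponding descriptions via coequalizers and via Galois coverings, together with the fact that covering theory behaves well under pushouts along monomorphisms. The simplicial structure is induced by the standard enrichment $\Hom_{\hSh}(X\otimes K, Y) \cong \Hom_{\hSh}(X,Y^K)$ with $K\in \Sh$ finite; Quillen's axiom SM7 reduces to checking that $X \otimes \partial\Delta^n \hookrightarrow X\otimes \Delta^n$ is a cofibration and a weak equivalence when appropriate, which can be verified factor-wise.

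The hard part will be step (iii), specifically the small-object-style construction of the factorization through a \emph{trivial} cofibration. The obstacle is that the generating fibrations $E\Gamma \to B\Gamma$ and the Eilenberg--MacLane path fibrations are only ``cosmall'' in the profinite sense (limits of finite pieces), and one must check carefully that the transfinite construction, run over the opposite of a sufficiently large ordinal, converges and that at the limit stage the resulting map actually has the required lifting property \emph{and} is a weak equivalence on the nose, i.e. induces isomorphisms on $\pi_0$, $\pi_1$ with all profinite coefficient systems, and on all $H^q_{\mathrm{cts}}$. This is where the technical correction cited from \cite{gspaces} becomes essential: the generating sets must be chosen large enough to detect isomorphisms on $\pi_1$ (all finite groups $\Gamma$, not just abelian ones) and on cohomology with \emph{all} finite local coefficient systems, and one must check that the resulting factorization lives in $\hSh$ (not merely in simplicial sets), using that $\hEh$ is closed under the relevant limits.
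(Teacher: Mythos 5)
First, note that the paper does not prove this theorem at all: it is quoted from \cite{profinhom} with the corrected proof in \cite{gspaces}, Theorem 2.3, so your sketch has to be measured against that cited argument. Your overall strategy (explicit generating sets of fibrations and trivial fibrations, a cosmall object argument exploiting that every profinite space is a cofiltered limit of simplicial finite sets, SM7 checked on the standard enrichment) is indeed the strategy of the cited proof. But there are two genuine gaps. The first is exactly the mistake the paper warns about: your candidate generating fibrations are untwisted objects $E\Gamma\to B\Gamma$ and path fibrations of Eilenberg--MacLane spaces $K(M,n)$ with \emph{abelian} finite coefficients. Lifting against these only detects cohomology with constant coefficients, whereas the weak equivalences of Definition \ref{defnwe} are defined via cohomology with arbitrary finite \emph{local} coefficient systems. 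The corrected generating sets of \cite{gspaces} consist of the twisted, homotopy-orbit fibrations $E\pi\times_{\pi}L(M,n)\to E\pi\times_{\pi}K(M,n+1)$ for finite groups $\pi$ acting on finite modules $M$ (together with $E\pi\to B\pi$ and maps of finite sets); these are precisely what encode local coefficients and non-abelian $\pi_1$. You flag at the end that the sets ``must be chosen large enough,'' but you do not supply the device that makes this possible, and with your stated sets the argument would fail: a map could have the RLP against all your generators without inducing isomorphisms on cohomology with twisted coefficients.

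The second gap is that the heart of the proof is missing, namely the identification of the lifting classes with the homotopically defined classes: one must show that a map with the left lifting property against the generating fibrations is a weak equivalence (and conversely that trivial fibrations have the RLP against all level-wise monomorphisms). Your assertion that RLP against all cofibrations is ``exactly'' surjectivity on cochains, cocycles and $\pi_0$ is the statement to be proved, not a proof; in \cite{gspaces} this is done by obstruction-theoretic and Postnikov/coskeleton arguments together with a cohomological characterization of weak equivalences of the kind recorded here as Proposition \ref{wepf}. By contrast, the step you single out as hardest (convergence of the dual small object argument) is comparatively routine, since profinite spaces are cosmall by compactness and $\hEh$ is closed under the relevant limits. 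Finally, your Mayer--Vietoris argument for left properness is unnecessary: every object of $\hSh$ is cofibrant (the map from the empty object is a level-wise monomorphism), and left properness is automatic in any model category in which all objects are cofibrant.
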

We consider the category $\Sh$ of simplicial sets with the usual model structure of \cite{homalg}. We denote its homotopy category by $\Hh$. Then the next result follows as in \cite{profinhom}, Proposition 2.28.
\begin{prop}\label{adjcompletion}
{\rm 1.} The completion functor $\compl: \Sh \to \hSh$ preserves weak equivalences and cofibrations.\\
{\rm 2.} The forgetful functor $|\cdot|:\hSh \to \Sh$ preserves fibrations and weak equivalences between fibrant objects.\\
{\rm 3.} The induced completion functor $\compl: \Hh \to \hHh$ and the right derived functor $R|\cdot|:\hHh \to \Hh$ form a pair of adjoint functors.
\end{prop}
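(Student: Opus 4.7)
The plan is to verify parts (1) and (2) directly; part (3) will then follow by standard Quillen adjunction formalism. The functors $\hat{(\cdot)}$ and $|\cdot|$ already form an adjoint pair at the level of profinite sets and simplicial sets, so the question is purely one of model-categorical compatibility.

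For (1), a cofibration in $\Sh$ is a levelwise injection, so it suffices to check that profinite completion of a set preserves injections; this holds because any finite partition of a subset $A\subset B$ extends to a finite partition of $B$ by adjoining one class for $B\setminus A$, so each finite quotient of $B$ surjects onto the corresponding one of $A$ and injectivity of $\hat A \to \hat B$ on the limit follows. For weak equivalences, I would establish three comparison isomorphisms between the invariants of $\hat Z$ in $\hSh$ and the classical invariants of $Z$. First, $\pi_0(\hat Z) \cong \widehat{\pi_0(Z)}$, since $\pi_0$ in $\hSh$ is defined as a coequalizer in $\hEh$ and the left adjoint $\hat{(\cdot)}$ commutes with coequalizers. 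Second, $\pi_1(\hat Z, z) \cong \widehat{\pi_1(Z,z)}$, because the finite Galois covers of $\hat Z$ in $\hSh$ correspond to the finite connected covers of $Z$ and hence classify the finite quotients of $\pi_1(Z,z)$. Third, for any finite local coefficient system $\Mh$, $H^q_{\mathrm{cts}}(\hat Z;\Mh) \cong H^q(Z;\Mh)$, which follows at the cochain level from the adjunction $\Hom_{\hEh}(\hat Z_n,\pi) = \Hom_{\Eh}(Z_n,\pi)$ recorded in the excerpt, promoted to local coefficients via the identification of fundamental groupoids. Since a classical weak equivalence $f:Z\to W$ induces isomorphisms on $\pi_0$, $\pi_1$ and on cohomology with every finite local coefficient system, and since profinite completion of groups and sets preserves isomorphisms, the three comparisons together show that $\hat f$ satisfies Definition \ref{defnwe}(1).

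For (2), the content of (1) together with the adjunction says exactly that $(\hat{(\cdot)},|\cdot|)$ is a Quillen adjunction: the left adjoint preserves cofibrations and trivial cofibrations, so by the adjoint lifting property the right adjoint $|\cdot|$ preserves fibrations and trivial fibrations. Preservation of weak equivalences between fibrant objects is then Ken Brown's lemma applied to the right Quillen functor $|\cdot|$. Part (3) is then formal: the Quillen pair descends to an adjunction of homotopy categories, and because $\hat{(\cdot)}$ already preserves all weak equivalences, its total left derived functor coincides with the induced functor $\compl:\Hh\to\hHh$, while $R|\cdot|$ is computed as $|\cdot|$ applied to a fibrant replacement in $\hSh$.

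The principal obstacle is the identification $\pi_1(\hat Z,z)\cong \widehat{\pi_1(Z,z)}$ and its groupoid-level refinement needed for local coefficients, since the profinite fundamental group is defined via a Grothendieck-style covering theory in $\hSh$ rather than via loops, and one must translate between the two frameworks and verify that the covering spaces of $\hat Z$ in $\hSh$ are exactly the inverse systems of finite covers of $Z$. The remaining steps are essentially formal consequences of adjunctions and Quillen adjunction theory.
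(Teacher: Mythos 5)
Your proposal is correct and takes essentially the route the paper intends: the paper simply defers to \cite{profinhom}, Proposition 2.28, whose argument is exactly your combination of the comparison isomorphisms for $\pi_0$, $\pi_1$ and finite (local) coefficient cohomology under completion, followed by the formal Quillen-adjunction and Ken Brown's lemma steps for parts (2) and (3). The one point to watch is that Definition \ref{defnwe} demands the $\pi_1$-condition at \emph{every} vertex of $\hat{Z}_0$, which is in general much larger than $Z_0$, so the covering-space/groupoid correspondence you yourself single out as the principal obstacle is needed not only for local coefficients but also to treat basepoints not in the image of $Z_0$.
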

Let $R_f$ be a fibrant replacement functor in $\hSh$ which exists by the general nonsense of fibrantly generated model structures.
\begin{defn}\label{defhomotopygroups}
Let $X$ be a pointed profinite space. We define the {\em $n$th profinite homotopy group of $X$} for $n \geq 2$ to be the profinite group
$$\pi_n(X):=\pi_0(\Omega^n(R_fX)).$$
\end{defn}
The homotopy groups in Definition \ref{defhomotopygroups} carry a natural profinite structure. In order to compute the homotopy groups of a profinite space $X$ we take a fibrant replacement of $X$ in $\hSh$, and then take the usual homotopy groups of the fibrant simplicial set $R_fX$.  
 The main goal of this paper is to get a better understanding of this fibrant replacement in $\hSh$. This is the task for the next section. \\
But let us remark that, after defining weak equivalences and constructing the model structure on $\hSh$, one gets that these homotopy groups detect weak equivalence, i.e. a map $f$ in $\hSh$ is a weak equivalence of connected profinite spaces if and only if $\pi_*(f)$ is an isomorphism of profinite (abelian) groups.  
%
%
\section{Profinite completion, fibrant replacements and pro-finite spaces}
\subsection{Profinite completion revisited}
In the previous section, we have defined a set-theoretic profinite completion functor that sends a space to a profinite space and we have seen that every profinite space can be decomposed as a limit of simplicial finite sets, i.e. simplicial objects of $\Fh$. But from a homotopy theoretic point of view this functor alone is not satisfactory. A \emph{finite space} in homotopy theory is a simplicial set $X$ that has only finitely many non-trivial homotopy groups and those being non-trivial are finite groups. On the other side, a simplicial finite set and even a finite simplicial set, i.e. one with only finitely many non-degenerate simplices, may have infinite homotopy groups. So being a profinite space in our terminology does not imply that the homotopy groups of the underlying simplicial set are profinite. But a profinite completion in terms of homotopy theory should be a functor that sends a space $X$ in a universal way to a limit $\hat{X}_f=\lim_i X_i$ or a cofiltering system of finite spaces $X_i$ such that each homotopy group of $\hat{X}_f$ (as a simplicial set) is the limit of the finite homotopy groups of the $X_i$. 
\begin{remark}
The reader should be aware of the overloaded terminology which might be confusing at first glance. We stick to the common and well-known notion of a \emph{finite space} for a simplicial set $X$ that has only finitely many non-trivial homotopy groups all of which are finite groups. The reader should not confuse it with the notion of a \emph{profinite space} for an object in $\hSh$, even when the profinite space happens to be a simplicial finite set. But we will see in \S 3.6 that every profinite space is weakly equivalent in $\hSh$ to a pro-object of finite spaces. Hence in the end the two notions are related in the way that one may expect.
\end{remark}
\begin{example}\label{circle}
The simplicial circle $S^1=\Delta^1/\partial \Delta^1$ is a simplicial finite set. But its fundamental group $\pi_1S^1=\Z$ is infinite. The point is that $S^1$ is not fibrant. So if we want to calculate its homotopy groups we have to replace it by a weakly equivalent fibrant simplicial set, for example the classifying space $B\Z$. The profinite fundamental group of $S^1$ as an object in $\hSh$ is $\hat{\Z}$, the profinite completion of $\Z$. One way to calculate this fundamental group is to replace $S^1$ by a profinite space that is fibrant in $\hSh$ and weakly equivalent to $S^1$ in $\hSh$, for example $B\hat{\Z}$. The set-theoretic profinite completion $\Sh \to \hSh$ functor of the previous section sends $S^1$ to itself because it is already a simplicial finite set. The homotopy-theoretic completion functor should rather send $S^1$ to $B\hat{\Z}=\lim_n B\Z/n$.
\end{example}
The first solution for the existence of a profinite completion functor, in the pro-homotopy category of connected spaces, has been given by Artin and Mazur in \cite{artinmazur}. Let us quickly recall their construction. 
For a category $\Ch$ with small limits, the pro-category of $\Ch$, denoted pro-$\Ch$, has as objects all cofiltering diagrams $X:I \to \Ch$. Its sets of morphisms are defined as
$$\Hom_{\pro-\Ch}(X,Y):=\lim_{j\in J}\colim_{i\in I} \Hom_{\Ch}(X_i,Y_j).$$
The functor sending an object $X$ of $\Ch$ to the constant pro-object with value $X$ makes $\Ch$ a full subcategory of pro-$\Ch$. The right adjoint of this embedding is the limit functor $\lim$: pro-$\Ch$ $\to \Ch$, which sends a pro-object $X$ to the limit in $\Ch$ of the diagram corresponding to $X$.\\ 
Let $\Hh_0$ denote the subcategory of connected spaces and $\Hhf$ be the subcategory of connected finite spaces in $\Hh$. Artin and Mazur showed in \cite{artinmazur} that, for every space $X\in \Hh_0$, the functor 
$$\Hhf \to \Eh,~F\mapsto [X,F],$$ 
is pro-representable in $\Hhf$. The representing pro-object $\hat{X}^{\mathrm{AM}}\in \mathrm{pro}-\Hhf$ is called the (Artin-Mazur) profinite completion of $X$. 
Then Sullivan showed in \cite{sullivan} that the underlying diagram in $\Hhf$ of $\hat{X}^{\mathrm{AM}}$ has a limit $\hat{X}^{\mathrm{Su}}$ in $\Hh$. Moreover, there are analogues of these functors for various subclasses of the class of finite groups, for which one replaces $\Hhf$ by its subcategory of connected finite spaces whose homotopy groups are all in this smaller class. For example, one could consider the class of finite $p$-groups for a fixed prime number $p$. \\
The drawback of these constructions is that they are only obtained in the homotopy category or even its pro-category. A first rigidification of $\hat{X}^{\mathrm{AM}}$ has been given by Rector in \cite{rector}. For a connected space $X$, he defined a rigid pro-space that is weakly equivalent in pro-$\Hh$ to $\hat{X}^{\mathrm{AM}}$.\\ 
We will generalize this rigidification to arbitrary spaces and reinterpret it in the language of model categories via the model structure of Theorem \ref{modelstructure} on the category $\hSh$. 
As Example \ref{circle} already suggests, a good profinite completion functor from the homotopy point of view can be obtained by composing the set-theoretic completion $\Sh \to \hSh$ with a functorial fibrant replacement in $\hSh$. So the model structure of Theorem \ref{modelstructure} is the crucial ingredient in order to get a rigid version of the full profinite completion functor.\\ 
The idea to use $\hSh$ as the category in which a pro-$p$-completion should take place is due to Morel. He paved the way. Morel proved in \cite{ensprofin} that there is a model structure on $\hSh$ for each prime number $p$ in which the weak equivalences are maps that induce isomorphisms on $\Z/p$-cohomology. The fibrant replacement functor $R_p$ of \cite{ensprofin} yields a rigid version of the pro-$p$-finite-completion of Artin-Mazur and Sullivan. The homotopy groups for this structure are pro-$p$-groups being defined as above using $R_p$. 
\begin{remark}
Let $L$ be a set of primes. A finite $L$-group is a finite group whose order is only
divisible by primes of $L$. The methods of \cite{profinhom} and \cite{gspaces} provide a framework for a rigid model of the Artin-Mazur and Sullivan pro-$L$-completion. In this paper, we will always treat the case of the class of all finite groups. All statements on profinite completion can be transferred to pro-$L$-completion by rephrasing the proofs in terms of finite $L$-groups instead of all finite groups.
\end{remark}
%
%
\subsection{Completion of spaces versus completion of groups I}
Before we start with the construction, we consider the question in which way the set-theoretic completion of spaces interacts with the well-known profinite completion of groups. Since this question will be a constant companion, we should be aware of the problem and should be equipped with some terminology. \\
We will also denote the profinite completion of a group $G$ by $\hat{G}$. (The context will always make clear which completion is applied.) It is defined as the limit $\lim_U G/U$ over all open normal subgroups of $G$. It is equipped with a natural map $G\to \hat{G}$ which is universal among maps from $G$ to profinite groups. \\ 
Given a pointed space $X\in \Shp$, the homotopy groups of Definition \ref{defhomotopygroups} of its profinite completion $\hat{X}\in \hShp$ are profinite groups. Hence the induced map $\pi_tX \to \pi_t\hat{X}$ factors through the group completion of $\pi_tX$, i.e. there is a commutative diagram
$$\xymatrix{
\pi_tX\ar[d] \ar[r] &\pi_t\hat{X}\\ 
 \widehat{\pi_tX}\ar[ur]_{\varphi_t}.}
$$
It is a fundamental question how the completions of spaces and of groups interact. For fundamental groups, we have the following result, which follows from the construction of the profinite fundamental group of profinite spaces via finite covering spaces, see \cite{profinhom} \S\,2.1.
\begin{prop}\label{profinitecompletion}
Let $X$ be a connected pointed simplicial set $X$. The profinite group $\pi_1(\hat{X})$ is equal to the group completion of $\pi_1(X)$, i.e. $\varphi_1:\widehat{\pi_1(X)}\stackrel{\cong}{\to} \pi_1(\hat{X})$ is an isomorphism of profinite groups. 
\end{prop}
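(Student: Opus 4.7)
My plan is to reduce both sides of the claimed isomorphism to the same indexing system of finite quotients of $\pi_1(X)$. On the left, by the very definition of profinite completion of a discrete group, $\widehat{\pi_1(X)} = \lim_N \pi_1(X)/N$, where $N$ ranges over the finite-index normal subgroups of $\pi_1(X)$. On the right, by the Grothendieck-style construction recalled in the excerpt, the profinite group $\pi_1(\hat{X},x)$ is the cofiltered limit $\lim_{\tilde Y} \Aut(\tilde Y/\hat X)$ taken over pointed connected finite Galois covers $\tilde Y \to \hat X$ in $\hSh$. So it suffices to produce an equivalence between the category of pointed connected finite Galois covers of $X$ in $\Sh$ and that of pointed connected finite Galois covers of $\hat X$ in $\hSh$, compatible with the Galois automorphism groups; then the map $\varphi_1$ of the statement is what one gets by passing to inverse limits.

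To set up this equivalence I would classify covers on each side by their monodromy. A pointed connected $F$-Galois cover of $X$, with $F$ finite, is the same data as a surjective homomorphism $\pi_1(X)\twoheadrightarrow F$, or equivalently a pointed homotopy class $X \to BF$ whose induced map on $\pi_1$ is surjective. The key observation is that, since $F$ is finite, $BF$ is a simplicial finite set, hence already an object of $\hSh$ equal to its own set-theoretic profinite completion. By Proposition \ref{adjcompletion} the adjunction $\compl \dashv |\cdot|$ then yields a natural bijection
\[
\Hom_{\Sh}(X,BF) \;\cong\; \Hom_{\hSh}(\hat X, BF),
\]
and unwinding the definitions shows that the right-hand side classifies pointed $F$-Galois covers of $\hat X$ in $\hSh$ in exactly the same way. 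Running this correspondence over all finite $F$ and all compatible surjections, and passing to the inverse limit of the resulting finite Galois groups, produces the map $\widehat{\pi_1(X)}\to \pi_1(\hat X)$; since the two indexing systems of finite quotients have been identified bijectively, this map is an isomorphism of profinite groups, and one checks that it agrees with the natural factorization $\varphi_1$.

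The step I expect to be the main obstacle is the transition from the set-level bijection above to the statement at the level of pointed homotopy classes of maps, compatible with the $F$-action. Concretely, one must show that $BF$ (with $F$ finite) is fibrant in the model structure of Theorem \ref{modelstructure} — so that homotopy classes can be computed in $\hSh$ — and that a pointed map $\hat X \to BF$ in $\hSh$ does in fact classify an $F$-Galois cover of $\hat X$ with continuous $F$-action, naturally in $F$. Once this verification is in place, together with the bookkeeping for base points, taking the inverse limit over the directed system of finite quotients $F$ of $\pi_1(X)$ mechanically produces the asserted isomorphism $\varphi_1$.
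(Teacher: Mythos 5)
Your proof is correct and takes essentially the same route as the paper, which justifies the proposition directly from the covering-space definition of the profinite fundamental group: one identifies the finite Galois covers of $\hat X$ with those of $X$ and passes to the limit of their Galois groups, exactly the reduction you carry out. Your mechanism for the cover comparison (classification by maps to $BF$, the adjunction of Proposition \ref{adjcompletion}, and fibrancy of $BF$ as in Proposition \ref{sgfib}) is just an explicit rendering of what the paper delegates to the construction in \cite{profinhom}, \S 2.1.
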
   
For higher homotopy groups, the profinite Hurewicz theorem implies the next result, see \cite{profinhom} Proposition 2.31.
\begin{prop}\label{hatpin}
Let $X$ be a pointed simplicial set. Suppose that $\pi_q(X)=0$ for $q<n$. Then $\pi_n(\hat{X})$ is the profinite group completion of $\pi_n(X)$.
\end{prop}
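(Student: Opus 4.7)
The plan is to reduce the statement to cohomology via the classical and profinite Hurewicz theorems, and then invoke the comparison $H^{\ast}(X;A)\cong H^{\ast}_{\mathrm{cts}}(\hat{X};A)$ recalled in \S2 for finite abelian coefficients $A$.

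For $n=1$ the result is already Proposition~\ref{profinitecompletion}, so I assume $n\geq 2$. Then $X$ is simply connected, so every coefficient system in play is untwisted, both classically and after completion, and I do not have to worry about profinite local coefficient systems. Next I would verify that $\hat{X}$ is $(n-1)$-connected as a profinite space, i.e.\ $\pi_q(\hat{X})=0$ for $q<n$. The cases $q=0,1$ follow from the hypothesis together with Proposition~\ref{profinitecompletion}. For $2\leq q<n$, classical Hurewicz gives $H^q(X;A)=0$ for every finite abelian group $A$, hence by the cohomology comparison $H^q_{\mathrm{cts}}(\hat{X};A)=0$ in that range. Inductive application of the profinite Hurewicz theorem of~\cite{profinhom} then yields the desired vanishing.

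With $\hat{X}$ now $(n-1)$-connected in the profinite sense, the profinite Hurewicz theorem applies in degree $n$ and produces, for every finite abelian group $A$, a natural isomorphism
$$
\Hom^{\mathrm{cts}}(\pi_n(\hat{X}),A)\;\cong\;H^n_{\mathrm{cts}}(\hat{X};A).
$$
On the other side, since $X$ is $(n-1)$-connected, classical Hurewicz together with universal coefficients (the $\Ext$ term vanishes because $H_{n-1}(X)=0$) gives $\Hom(\pi_n(X),A)\cong H^n(X;A)$. Combining these with the comparison isomorphism, I obtain
$$
\Hom^{\mathrm{cts}}(\pi_n(\hat{X}),A)\;\cong\;\Hom(\pi_n(X),A)\;\cong\;\Hom^{\mathrm{cts}}(\widehat{\pi_n(X)},A),
$$
naturally in $A$. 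Since a profinite abelian group is determined by its continuous Hom-functor into finite abelian groups (Pontryagin-style duality), the canonical map $\varphi_n\colon\widehat{\pi_n(X)}\to\pi_n(\hat{X})$ must itself be an isomorphism.

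The main obstacle is pinning down the precise form of the profinite Hurewicz theorem needed: namely, the fact that for an $(n-1)$-connected profinite space the degree-$n$ continuous cohomology with finite coefficients is corepresented by $\pi_n$, and that the identifications above are compatible with the maps induced by the completion $X\to\hat{X}$. Once those naturality checks are in place, the conclusion that $\varphi_n$ is the profinite completion of $\pi_n(X)$ is formal.
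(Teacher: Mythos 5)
Your outline is sound, and it rests on the same engine the paper invokes: the paper gives no argument of its own for Proposition~\ref{hatpin}, it simply notes that the profinite Hurewicz theorem implies the statement and cites \cite{profinhom}, Proposition 2.31. What you do differently is to run the comparison through continuous cohomology with finite coefficients and a Pontryagin-type duality: you first establish that $\hat{X}$ is $(n-1)$-connected in $\hSh$ (this step is genuinely needed, and your induction is fine, since a nontrivial profinite abelian group always has a nontrivial finite quotient, so vanishing of $H_{\mathrm{cts}}^{q}(\hat{X};A)$ for all finite abelian $A$ does force $\pi_q(\hat{X})=0$ at the lowest nonvanishing degree), and then identify $\Hom^{\mathrm{cts}}(\pi_n(\hat{X}),A)$ with $\Hom^{\mathrm{cts}}(\widehat{\pi_n(X)},A)$ naturally in $A$. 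The argument behind the citation is the homological dual and is a bit shorter: for an $(n-1)$-connected profinite space the Hurewicz map identifies $\pi_n(\hat{X})$ with continuous homology in $\hat{\Z}$-coefficients, which completion relates directly to $H_n(X;\Z)\cong\pi_n(X)$, so no duality or universal-coefficient step intervenes. Your version buys that it only uses the finite-coefficient comparison $H^{\ast}(X;A)\cong H_{\mathrm{cts}}^{\ast}(\hat{X};A)$ recalled in \S 2 of this paper, at the cost of needing the corepresentability statement $H_{\mathrm{cts}}^{n}(\hat{X};A)\cong\Hom^{\mathrm{cts}}(\pi_n(\hat{X}),A)$ for $(n-1)$-connected profinite spaces (a profinite universal-coefficient or Eilenberg--MacLane/Postnikov argument in $\hSh$, available in the framework of \cite{profinhom} but not quotable verbatim from \S 2 here) and the naturality check that the composite isomorphism is precomposition with $\varphi_n$; you flag both points explicitly, and neither is a genuine gap, just bookkeeping that the homological route avoids.
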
 
Unfortunately, $\varphi_t$ is not an isomorphism in general for $t\geq 2$. A related phenomenon is well known for group completion and cohomology. In \cite{serre}, this led Serre to call an abstract group $G$ {\em good} if the induced map $\psi:H_{\mathrm{cts}}^*(\hat{G};M)\to H^*(G;M)$ between continuous and discrete group cohomology is an isomorphism for every finite discrete $\hat{G}$-module $M$. It turns out that the notion of a good group is also crucial for the completion of spaces and its homotopy groups.\\
Let $G$ be an abstract group and let $BG$ be the simplicial classifying space of $G$ given in degree $n$ by a product of $n$ copies of $G$. It has a profinite analogue $B\hat{G}$ in $\hSh$ which is given in degree $n$ by the product of $n$ copies of the profinite group $\hat{G}$. We denote by $\widehat{BG} \in \hSh$ the (set-theoretic) profinite completion of the space $BG$. The universal property of profinite completion of spaces induces a commutative diagram 
$$\xymatrix{
BG \ar[r] \ar[d] & B\hat{G}  \\
\widehat{BG} \ar[ur]_{\varphi} & }
$$
in $\hSh$. As one might expect, the map $\varphi:\widehat{BG} \to B\hat{G}$ is in general neither an isomorphism nor a weak equivalence. The difference between the two spaces comes from the difference of the completion of $G$ as a set and its completion as a group. This difference is exactly on what Serre's notion of good groups is based on. For classifying spaces it can be rephrased as follows, see also \cite{artinmazur} \S 6. 
\begin{prop}\label{cor6.6}
The canonical map $\varphi:\widehat{BG} \to B\hat{G}$ of profinite spaces is a weak equivalence in $\hSh$ if and only if $G$ is good.
\end{prop}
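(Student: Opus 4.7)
The plan is to verify each of the three conditions in Definition \ref{defnwe} separately. Both $\widehat{BG}$ and $B\hat{G}$ are path-connected, so $\pi_0(\varphi)$ is trivially an isomorphism. For fundamental groups, Proposition \ref{profinitecompletion} applied to $BG$ gives $\pi_1(\widehat{BG}) \cong \widehat{\pi_1(BG)} = \hat{G}$, while a direct computation (using that $E\hat{G} \to B\hat{G}$ is a universal profinite covering) yields $\pi_1(B\hat{G}) = \hat{G}$. Under these identifications $\pi_1(\varphi)$ is the identity on $\hat{G}$. Thus $\varphi$ is a weak equivalence if and only if it induces an isomorphism on continuous cohomology with every finite local coefficient system on $B\hat{G}$.

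Next I would identify the relevant coefficient systems and cohomology groups. Since $\pi_1(B\hat{G}) = \hat{G}$, a local system $\Mh$ of finite abelian groups on $B\hat{G}$ corresponds to a finite abelian group $M$ equipped with a continuous $\hat{G}$-action; its pullback $\varphi^*\Mh$ on $\widehat{BG}$ is the same $M$ viewed as a $G$-module through $G \to \hat{G}$. The continuous cochain complex $C^*_{\mathrm{cts}}(B\hat{G};\Mh)$ unfolds, degree by degree, into $\Hom_{\hEh}(\hat{G}^n, M)$ with the bar differentials, which is by definition the complex computing the continuous group cohomology $H^*_{\mathrm{cts}}(\hat{G};M)$. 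For the source, since $M$ is finite, the canonical isomorphism $H^*(Z;\pi) \cong H^*_{\mathrm{cts}}(\hat{Z};\pi)$ recalled in Section~2 extends to finite local systems: a continuous cochain $(\widehat{BG})_n \to M$ factors through a finite quotient and so corresponds bijectively to an arbitrary cochain $(BG)_n \to M$, compatibly with differentials and the $\Pi(-)$-description of local-coefficient cohomology. Hence $H^*_{\mathrm{cts}}(\widehat{BG};\varphi^*\Mh) \cong H^*(BG;M) = H^*(G;M)$, the ordinary discrete group cohomology.

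Putting these identifications together, the induced map $\varphi^*:H^*_{\mathrm{cts}}(B\hat{G};\Mh) \to H^*_{\mathrm{cts}}(\widehat{BG};\varphi^*\Mh)$ is identified with Serre's comparison map $\psi:H^*_{\mathrm{cts}}(\hat{G};M) \to H^*(G;M)$. Consequently $\varphi$ is a weak equivalence in $\hSh$ if and only if $\psi$ is an isomorphism for every finite continuous $\hat{G}$-module $M$, which is precisely the definition of $G$ being good.

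The technical obstacle is the middle step: rigorously identifying $H^*_{\mathrm{cts}}(\widehat{BG};\varphi^*\Mh)$ with $H^*(G;M)$. The identification for trivial finite coefficients is immediate from the adjunction between $\compl$ and the forgetful functor, but for a nontrivial local system one has to match the complex $\hom_{\Pi(\widehat{BG})}((\widetilde{\widehat{BG}},-),\varphi^*\Mh)$ of continuous natural transformations on the universal profinite cover against the classical twisted cochain complex of $BG$, and to do so using only that $M$ is finite (so that continuity of cochains is automatic). Once this bookkeeping is in place the rest of the argument is a direct unraveling.
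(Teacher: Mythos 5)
Your argument is correct and follows essentially the same route as the paper: identify $\pi_1$ of both sides with $\hat{G}$ via group completion, and then identify the map on continuous cohomology with finite local coefficients with Serre's comparison map $H^\ast_{\mathrm{cts}}(\hat{G};M)\to H^\ast(G;M)$, so that the weak-equivalence condition is exactly goodness. The "technical obstacle" you flag (extending $H^\ast(BG;M)\cong H^\ast_{\mathrm{cts}}(\widehat{BG};M)$ from constant to finite local coefficients) is handled in the paper by the same implicit observation you sketch, namely that cochains with finite target factor through finite quotients.
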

\begin{proof}
We have seen that $\pi_1(\widehat{BG})\cong\hat{G}$ is the group completion of $\pi_1(BG) \cong G$. Moreover, the profinite fundamental group of $B\hat{G}$ is also equal to $\hat{G}$ and $\pi_1(\varphi)$ is an isomorphism.\\
The crucial point where the properties of $G$ come into play is the question wether $\varphi$ induces an isomorphism in cohomology. For every finite $G$-module $M$, $\varphi$ induces the sequence of maps 
$$H^q(G;M)\cong H^q(BG;M)\cong H_{\mathrm{cts}}^q(\widehat{BG};M) \to H_{\mathrm{cts}}^q(B\hat{G};M)\cong H_{\mathrm{cts}}^q(\hat{G};M)$$
between the usual group cohomology $H^q(G;M)$ and the continuous cohomology $H_{\mathrm{cts}}^q(\hat{G};M)$. This map is an isomorphism for every $q$ if and only if $G$ is good. 
\end{proof}

The same holds for Eilenberg-MacLane spaces $K(G,r)$ for $r>1$ and an abelian group $G$, i.e. the canonical map $\widehat{K(G,r)} \to K(\hat{G},r)$ is a weak equivalence in $\hSh$ if and only if $G$ is good. The proof of this statement is more complicated than the previous one, see \cite{artinmazur} \S 6.\\
We will see below that by modifying slightly the notion of good groups by considering the action of the fundamental group on the higher homotopy groups, one obtains a sufficient condition such that the completion of spaces commutes with the one of groups for all homotopy groups. This result is due to Sullivan \cite{sullivan} and we will translate it to our setting via the following fibrant replacement functor in $\hSh$.
\subsection{Simplicial groupoids}
The classifying space functor for groups given by the bar construction has a natural analogue $\bar{W}$ for simplicial groups, i.e. simplicial objects in the category of groups. If $\Gamma$ is a simplicial group, let $W\Gamma$ be the simplicial set with
$$(W\Gamma)_n = \Gamma_n \times \Gamma_{n-1} \times \ldots \times \Gamma_0.$$
Then $W\Gamma$ becomes a $\Gamma$-space if we define $\Gamma \times W\Gamma \to W\Gamma$ by:
$$(h_n, (g_n, g_{n-1}, \ldots, g_0)) \mapsto (h_ng_n, g_{n-1}, \ldots, g_0)$$
for $h_n\in \Gamma_n$. The classifying space $\bar{W}\Gamma$ is defined as the quotient of $W\Gamma$ by the left $\Gamma$-action. In degree $0$, $\bar{W}\Gamma_0$ has just one element, so it is a reduced space, and in degree $n$ it is given by 
$$(\bar{W}\Gamma)_n = \Gamma_{n-1} \times \ldots \times \Gamma_0.$$
The functor $\bar{W}$ from simplicial groups to reduced spaces has a left adjoint, the free loop group construction $\Gamma$. For a reduced space $X$, i.e. $X_0$ consists of a single vertex, $\Gamma X$ is the simplicial group given in degree $n$ by the free group on the set $X_{n+1} - s_0(X_n)$. In fact, the pair of functors $\bar{W}$ and $\Gamma$ induce an equivalence between the homotopy categories of simplicial groups and of reduced spaces, cf. \cite{gj}, V Corollary 6.4.\\
In \cite{dk}, Dwyer and Kan extended this equivalence of homotopy categories to the homotopy category of all spaces by considering simplicial groupoids instead of just simplicial groups. A groupoid $\Gamma$ is a small category in which all maps are invertible. For an object $x$ of $\Gamma$, we denote the set of automorphisms of $x$ by $\Gamma(x,x)$. Dwyer and Kan define a simplicial groupoid to be a simplicial object in the category of groupoids whose object sets are all equal to a given set of objects. In other words, a simplicial groupoid $\Gamma$ consists of groupoids $\Gamma_n$ for every $n\geq 0$ and a functor $\Gamma_m \to \Gamma_n$ for every ordinal number map $\theta:[n] \to [m]$ such that all sets of objects $\mathrm{Ob}(\Gamma_n)$ are equal to one set of objects $\mathrm{Ob}(\Gamma)$ and all the functors $\Gamma_m \to \Gamma_n$ induce the identity map on $\mathrm{Ob}(\Gamma)$. We denote the category of simplicial groupoids by $s\mathrm{Gd}$. We refer the reader to \cite{gj}, V \S 7, for a careful discussion on simplicial groupoids. \\
There is a classifying space functor $\bar{W}:s\mathrm{Gd} \to \Sh$ that extends the one on simplicial groups. If $\Gamma$ is a simplicial groupoid, the vertices of $\bar{W}\Gamma$ are the objects of $\Gamma$ and, for $n\geq 1$, $\bar{W}\Gamma_n$ is given by the set of sequences of maps in $\Gamma$
$$Y_n \stackrel{g_{n-1}}{\longrightarrow} \ldots \stackrel{g_1}{\longrightarrow} Y_1 \stackrel{g_{0}}{\longrightarrow} Y_0$$
where each $g_i$ is a morphism in $\Gamma_i$.\\ 
%
%
The classifying space functor $\bar{W}$ is right adjoint to the loop groupoid functor $\Gamma$. For a simplicial set $X$, the loop groupoid $\Gamma X$ on $X$ is the simplicial groupoid whose object set is the set of vertices of $X$ and whose morphisms are in degree $n$ given by the free groupoid on generators $[x]: x_1 \to x_0$ with $x \in X_{n+1}$, subject to the relations $s_0 x_0 = 1_{x_0}$, $x_0 \in X_n$. The face and degeneracy maps are defined in \cite{gj}, V \S 7.\\ 
Applying $\Gamma$ and then $\bar{W}$ to a space $X$ yields a space $\bar{W}\Gamma X$. Every $n$-simplex $x$ of $X$ determines a sequence of morphisms 
$$x_n \stackrel{[d_0^{n-1}x]}{\longrightarrow} \ldots \stackrel{[d_0x]}{\longrightarrow} x_1 \stackrel{[x]}{\longrightarrow} x_0$$
in $\Gamma X$. This defines a canonical map of spaces
$$\eta: X \to \bar{W}\Gamma X.$$
%
%
Dwyer and Kan show that $s\mathrm{Gd}$ has an important model structure. The consequence of their theorem that motivates our construction is that $\eta$ is a weak equivalence and $\bar{W}\Gamma X$ is a fibrant model of $X$ in $\Sh$. 
\subsection{Simplicial profinite groupoids}
We would like to extend these ideas to simplicial profinite groupoids. Let us first recall the construction of free profinite groups on a profinite set,  cf. for example \cite{ribes} \S 3.3.\\
The free profinite group on a profinite set $S$ is a profinite group $\hat{F}(S)$ equipped with a canonical continuous injection $\iota: S \to \hat{F}(S)$ which topologically generates $\hat{F}(S)$, i.e. $\hat{F}(S)=\overline{\langle \iota(S) \rangle}$, and satisfies the following universal property:\\
For any continuous map $\varphi: S \to H$ to a profinite group $H$ such that $\varphi(S)$ generates $H$ topologically, i.e. $H=\overline{\langle \varphi(S) \rangle}$, there is a unique continuous homomorphism $\bar{\varphi}:\hat{F}(S) \to H$ such that the diagram
$$
\xymatrix{
S \ar[d]_{\iota} \ar[dr]^{\varphi} \\
\hat{F}(S) \ar[r]_{\bar{\varphi}} & H}
$$
commutes.\\ 
 If $S$ is a finite set, then the free profinite group $\hat{F}(S)$ can be constructed by taking the free abstract group $F(S)$ on $S$ and then forming the profinite group completion of $F(S)$, i.e. $\hat{F}(S) = \widehat{F(S)}$.
If $S$ is a profinite set given as an inverse limit $S=\lim_i S_i$ of finite sets $S_i$, then the free profinite group $\hat{F}(S)$ on $S$ is not just the profinite completion of the abstract free group on the underlying set $S$. But Ribes and Zalesskii show in \cite{ribes}, Proposition 3.3.9, that $\hat{F}(S)$ can be constructed as 
$$\hat{F}(S) = \lim_i \hat{F}(S_i).$$
Now let $X$ be a reduced profinite space. We define its free profinite simplicial loop group to be the simplicial profinite group $\hat{\Gamma}X$, i.e. simplicial object in the category of profinite groups, that is given in degree $n$ by the free profinite group on the profinite set $X_{n+1} - s_0(X_n)$. 
The classifying space functors $W$ and $\bar{W}$ are defined for simplicial profinite groups in exactly the same way as above for simplicial groups. The only difference is that they have values in the category of profinite spaces. Moreover, $\bar{W}$ is the natural right adjoint to the free profinite loop group functor $\hat{\Gamma}$.\\
For a non-reduced profinite space, we need a notion of a free simplicial profinite groupoid on $X$ that extends the free simplicial groupoid on spaces. Therefore, we make the following definitions, see also \cite{ensprofin}, p. 367.
We call a groupoid $\Gamma$ finite, if the set of objects of $\Gamma$ is finite and, for each object $x$, the set of automorphisms $\Gamma(x,x)$ of $x$ is a finite group. We call $\Gamma$ a profinite groupoid if the set of objects of $\Gamma$ is profinite and, for each object $x$, the set of automorphisms $\Gamma(x,x)$ of $x$ is a profinite group. The profinite completion $\hat{\Gamma}$ of a groupoid $\Gamma$ is the limit as a groupoid of the filtered system of its quotient groupoids which are finite groupoids.\\
A simplicial profinite groupoid is a simplicial groupoid $\Gamma$ in the above sense such that each $\Gamma_n$ is a profinite groupoid. The profinite completion functor from simplicial groupoids to simplicial profinite groupoids is defined by forming the profinite completion of groupoids in each dimension.\\
If $X$ is a simplicial finite set, we define the free profinite groupoid $\hat{\Gamma}X$ on $X$ as the profinite completion of the free groupoid on $X$. If $X$ is a profinite space, $X$ is canonically isomorphic to the limit of simplicial finite sets $\lim_R X/R$. We define the free profinite groupoid $\hat{\Gamma}X$ on $X$ as 
$$\hat{\Gamma}X:= \lim_R \hat{\Gamma}(X/R).$$
The classifying space functor $\bar{W}$ on simplicial profinite groupoids is defined in the same way as for simplicial groupoids. If $\Gamma$ is a simplicial profinite groupoid, then $\bar{W}\Gamma$ is a profinite space. The canonical map $\eta$ defined for spaces, has a profinite analogue $\heta$ for any profinite space $X$. It is defined as the limit of maps 
$$\heta/R: X/R \to \bar{W}\Gamma X/R $$ 
for each open equivalence relation $R$ on $X$, i.e. 
$$\heta=\lim_R \heta/R: X\to\bar{W}\hat{\Gamma}X.$$
It is possible to define a model structure on simplicial profinite groupoids as for simplicial groupoids and show analogues of Theorem 2.5 and Theorem 3.3 of \cite{dk}. But for our purposes we need only a small piece of the cake. The significant fact for us is that these constructions produce fibrant profinite spaces. 
\begin{prop}\label{sgfib}
Let $\Gamma$ be a simplicial profinite group.\\ 
(1) The underlying profinite space of $\Gamma$ is fibrant in $\hSh$.\\
(2) The profinite spaces $W\Gamma$ and $\bar{W}\Gamma$ are fibrant in $\hSh$.\\ 
(3) The quotient map $W\Gamma \to \bar{W}\Gamma$ and every principal $\Gamma$-bundle map is fibration in $\hSh$.
\end{prop}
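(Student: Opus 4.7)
The plan is to reduce each assertion to the case of a simplicial finite group and then to transport the result to the profinite case by a cofiltered-limit argument. Write $\Gamma$ as a cofiltered limit $\Gamma=\lim_i \Gamma_i$ of simplicial finite groups, by applying the decomposition $X=\lim_R X/R$ recalled in Section 2 to each $\Gamma_n$ and restricting to those simplicial open equivalence relations that are compatible with the group operations. Since $W$ and $\bar W$ are computed levelwise from cartesian powers of $\Gamma$, they commute with this limit: $W\Gamma=\lim_i W\Gamma_i$ and $\bar W\Gamma=\lim_i \bar W\Gamma_i$, and any principal $\Gamma$-bundle $E\to B$ decomposes similarly as a cofiltered limit of principal $\Gamma_i$-bundles.

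Next, I would handle the finite case directly. If $\Gamma_i$ is a simplicial finite group, its underlying profinite space is a simplicial finite set, so any lifting problem against a monomorphism with $\Gamma_i$, $W\Gamma_i$ or $\bar W\Gamma_i$ as target is a finite, set-theoretic problem in $\Sh$. By Moore's theorem every simplicial group is a Kan complex, so $\Gamma_i$ is Kan; $W\Gamma_i$ is the contractible Kan total space and $\bar W\Gamma_i$ the Kan classifying space of $\Gamma_i$; and the projection $W\Gamma_i\to\bar W\Gamma_i$, along with every principal $\Gamma_i$-bundle map, is a Kan fibration in $\Sh$. It remains to observe that, for a simplicial finite set target, fibrancy in $\hSh$ follows from Kan-ness in $\Sh$: the generating trivial cofibrations constructed in \cite{profinhom} and \cite{gspaces} are built from horn-type inclusions between finite profinite spaces, so continuity of the required lifts against a finite target is automatic. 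This establishes (1)--(3) for each $\Gamma_i$.

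Finally, pass to the cofiltered limit. The key input is that the class of fibrations in $\hSh$ is closed under cofiltered limits of compatible diagrams: for any $A$ in $\hSh$ one has $\Hom_{\hSh}(A,\lim_i Y_i)=\lim_i \Hom_{\hSh}(A,Y_i)$, and so a lifting problem against a generating trivial cofibration can be solved at each stage $\Gamma_i$ and the compatible lifts assemble to a lift at the limit. Combined with the finite case, this yields fibrancy of $\Gamma$, $W\Gamma$ and $\bar W\Gamma$, and shows that the projection $W\Gamma\to\bar W\Gamma$ and every principal $\Gamma$-bundle map is a fibration in $\hSh$.

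The main obstacle is isolating a sufficiently explicit description of the generating trivial cofibrations in $\hSh$ so that both the reduction \emph{fibrant-on-finite-targets equals Kan} and the closure under cofiltered limits can be verified cleanly. Once these are in hand, the argument is a standard cofiltered-limit assembly of the classical Moore--Kan facts about simplicial groups, their classifying spaces, and principal bundles.
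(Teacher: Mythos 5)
There is a genuine gap, and it occurs at both key steps of your reduction. First, the finite case: you claim that for a simplicial finite set the lifting problems defining fibrancy in $\hSh$ reduce to Kan-ness in $\Sh$ because ``the generating trivial cofibrations are horn-type inclusions between finite profinite spaces'' and continuity of lifts into a finite target is automatic. The model structure on $\hSh$ is \emph{fibrantly} generated (Theorem \ref{modelstructure}); the explicit generating sets of \cite{gspaces} are sets of fibrations and trivial fibrations, and fibrations are \emph{defined} by the right lifting property against \emph{all} trivial cofibrations, i.e.\ against all monomorphisms that are weak equivalences in the profinite sense of Definition \ref{defnwe}. Such maps are in general not weak equivalences of underlying simplicial sets (the map $\heta\colon S^1\to B\hat{\Z}$, or more generally $X\to \bar{W}\hat{\Gamma}X$ of Theorem \ref{fibrep}, is a trivial cofibration in $\hSh$ but not in $\Sh$), so Kan-ness of $\Gamma_i$, $W\Gamma_i$, $\bar{W}\Gamma_i$ gives you no lift against them; moreover a set-theoretic map from a profinite set to a finite set need not be continuous, so even when a lift exists in $\Sh$ its continuity is not automatic. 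This is exactly the point of the Remark following Proposition \ref{sgdfib}: underlying fibrancy in $\Sh$ does not yield fibrancy in $\hSh$, and the paper's proof instead exploits special features of $\bar{W}\Gamma$ (minimality, finiteness of the homotopy groups, and the identification of the coskeletal tower with a Postnikov tower whose $k$-invariants are continuous), exhibiting each $\cosk_{n+1}\bar{W}\Gamma\to\cosk_n\bar{W}\Gamma$ as a pullback of the generating fibration $E\pi_1\times_{\pi_1}L(\pi_n,n+1)\to E\pi_1\times_{\pi_1}K(\pi_n,n+1)$.

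Second, the passage to the limit: it is not true in general that fibrations or fibrant objects are closed under cofiltered limits along arbitrary transition maps, and your assembly argument does not work as stated, since solvability of the lifting problem at each stage $\Gamma_i$ does not produce a \emph{compatible} system of lifts (the sets of lifts into a simplicial finite set can be infinite, so no compactness argument rescues this without further work). The paper avoids this by running the limit along the tower of coskeleta, which is a tower of fibrations, and by checking that for a simplicial profinite \emph{group} the relevant invariants commute with $\lim_U\Gamma/U$: the normalized complex, its homology (using exactness of cofiltered limits of finite groups), hence $\pi_1$ and the $\pi_1$-modules $\pi_n$, and finally the $k$-invariants, which therefore live in continuous cocycles and make diagram (\ref{coskpb}) a pullback in $\hSh$ with profinite coefficients (Proposition \ref{wepf}). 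That commutation is precisely the special property of simplicial profinite groups that a formal ``limit of fibrant is fibrant'' argument cannot see. For part (3), the paper also does not decompose a principal $\Gamma$-bundle as a limit: it proves $W\Gamma\to\bar{W}\Gamma$ is a fibration by a relative coskeleton argument and then uses that $\bar{W}\Gamma$ classifies principal $\Gamma$-bundles in $\hSh$, so every such bundle is a pullback of $W\Gamma\to\bar{W}\Gamma$ and hence a fibration; your route would again need the unproved limit-closure statement. To repair your proposal you would essentially have to reproduce the coskeletal/$k$-invariant argument, at which point the detour through Kan-ness of the finite stages buys nothing.
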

\begin{proof}
All the assertions follow from the decomposition of simplicial objects into their tower of coskeleta and the description of the generating sets of fibrations $P$ and trivial fibrations $Q$ in the model structure of $\hSh$ given in \cite{gspaces}, p. 1027. The $n$th coskeleton $\cosk_nY$ of a profinite space $Y$ is given in degree $m$ by 
$$(\cosk_nY)_m= \Hom(\sk_n\Delta^m,Y)= \lim_{[k] \to [m],k\leq n} Y_k.$$
We observe that when $Y$ is a profinite space, the usual construction of the coskeleton of $Y$ inherits a natural profinite structure from $Y$. Moreover, the profinite space $Y$ is isomorphic in $\hSh$ to the limit $\lim_n \cosk_n Y$ of its coskeleta. Since the limit of a tower of fibrations is again a fibration, the map $Y\to *$ is a fibration if the maps $\cosk_{n+1}Y \to \cosk_nY$ for every $n\geq 2$ and $\cosk_2Y\to *$ are fibrations. \\
Recall from \cite{gspaces}, Theorem 2.3, that the prototype of a fibration in $\hSh$ is the canonical map 
$$E\pi \times_{\pi} L(M,n) \to E\pi \times_{\pi} K(M,n)$$
of homotopy orbits under a finite group $\pi$ where $M$ is a finite $\pi$-module and $L(M,n)=WK(M,n)$ is the contractible space defined above associated to the simplicial group $K(M,n)$. This extends immediately to a profinite group $\pi$ and a continuous profinite $\pi$-module $M$ using Proposition \ref{wepf} below and the lifting property as in the proof of Theorem 2.3 in \cite{gspaces}.\\
So let us start with a simplicial \emph{finite} group $\Gamma$ and consider the simplicial finite set $\bar{W}\Gamma$. We know that the underlying simplicial finite set of $\bar{W}\Gamma$ is fibrant in $\Sh$ and that the collection $\{\cosk_n \bar{W}\Gamma \}_n$ is a Postnikov tower for $\bar{W}\Gamma$. The homotopy groups of $\bar{W}\Gamma$ satisfy
$$\pi_{n+1}\bar{W}\Gamma=\pi_{n} \Gamma$$
for $n \geq 0$. These groups are finite groups, since $\Gamma$ is a simplicial finite group. We recall that $\pi_n\Gamma$ is equal to the $n$th homology $H_n(N\Gamma)$ of the normalized complex $N\Gamma$, $N\Gamma_n=\cap_{i=1}^{n}\Ker(d_i:\Gamma_n\to \Gamma_{n-1}$), which is a complex of finite groups with finite homology groups. Let $\pi_i=\pi_i\bar{W}\Gamma$ denote the $i$th homotopy group of $\bar{W}\Gamma$. The abelian group $\pi_n$ is a $\pi_1$-module for every $n\geq 2$. Moreover, $\bar{W}\Gamma$ is a minimal fibrant space and \cite{gj}, V Corollary 5.13, shows that for every $n\geq 2$ there is a pullback square 
\begin{equation}\label{coskpb}
\xymatrix{
\cosk_{n+1}\bar{W}\Gamma \ar[d] \ar[r] & E\pi_1 \times_{\pi_1} L(\pi_n,n+1) \ar[d]^q \\
\cosk_n \bar{W}\Gamma \ar[r]_{k_n} & E\pi_1 \times_{\pi_1} K(\pi_n,n+1).}
\end{equation}
The map $k_n$ is called the $k$-invariant. It fits into a commutative diagram
\begin{equation}\label{kn}
\xymatrix{
\cosk_{n+1}\bar{W}\Gamma \ar[d] \ar[r] & K(\pi_n,n+1) \ar[d] \\
\cosk_n\bar{W}\Gamma \ar[r ]\ar[ur]_{k_n} & B\pi_1.}
\end{equation}
Viewed as a relative cocycle in $Z^{n+1}(\cosk_n\bar{W}\Gamma; \pi_n)$ the map $k_n$ corresponds to the map that assigns to every $n+1$-simplex $\sk_n\Delta^{n+1}\to \bar{W}\Gamma$ the corresponding element in $\pi_n=\pi_n\bar{W}\Gamma$. Since 
$$q:E\pi_1 \times_{\pi_1} L(\pi_n,n+1) \to E\pi_1 \times_{\pi_1} K(\pi_n,n+1)$$ 
is a fibration in $\hSh$ and since fibrations are stable under pullbacks, we conclude that each $\cosk_{n+1}\bar{W}\Gamma \to \cosk_n \bar{W}\Gamma$ is a fibration in $\hSh$ for $n\geq 2$. Moreover, $\cosk_2\bar{W}\Gamma\to *$ is equal to $B\pi_1 \to *$, another generating fibration in $\hSh$. Thus $\bar{W}\Gamma$ is a fibrant object in $\hSh$.\\
Now let $\Gamma$ be a simplicial \emph{profinite} group. Then $\Gamma$ is isomorphic as a simplicial profinite group to the limit $\lim_U \Gamma/U$ of its simplicial finite quotient groups $\Gamma/U$ where $U$ runs through the simplicial profinite subgroups of $\Gamma$ such that $U_n$ is an open normal subgroup of $\Gamma_n$ for each $n$. Moreover, since $\Gamma$ is a simplicial \emph{group}, all the constructions we applied commute with this limit. For, as we have remarked above, $\cosk_n\bar{W}\Gamma$ is the limit of the $\cosk_n(\bar{W}(\Gamma/U))$. Moreover, the normalized complex $N\Gamma$ is the limit of the normalized complexes $N\Gamma/U$ and so is a complex of profinite groups. The homology $H_n(N\Gamma)$ commutes with this limit as well and we get that $\pi_n=\pi_n\bar{W}\Gamma=H_{n-1}(N\Gamma)$ is the limit of finite groups $\pi_n/U:=H_{n-1}(N\Gamma/U)$.
Hence the fundamental group $\pi_1$ is the inverse limit of the finite groups $\pi_1/U$ and the $\pi_1$-module $\pi_n$, for $n\geq 2$, is the limit of the finite $\pi_1/U$-modules $\pi_n/U$. This implies that each $\pi_n$ is a continuous profinite $\pi_1$-module. Hence the $k$-invariant $k_n$ becomes an element in the group of continuous cocycles and corresponds to a map $\cosk_n\bar{W}\Gamma \to K(\pi_n,n+1)$ over $B\pi_1$ in $\hSh$. So diagram (\ref{coskpb}) for the simplicial profinite group $\Gamma$ is in fact a diagram in $\hSh$. Since $q$ is again a fibration in $\hSh$, the map of profinite spaces $\cosk_{n+1}\bar{W}\Gamma \to \cosk_n \bar{W}\Gamma$ is a fibration in $\hSh$ for $n\geq 2$. Similarly, $\cosk_2\bar{W}\Gamma=B\pi_1\to *$ is a fibration in $\hSh$ and we conclude as above that $\bar{W}\Gamma$ is a fibrant object in $\hSh$.\\
A similar argument applied to the relative coskeleton functor shows that $W\Gamma \to \bar{W}\Gamma$ is a fibration in $\hSh$. Hence $W\Gamma$ is also a fibrant profinite space. Furthermore, $\bar{W}\Gamma$ classifies principal $\Gamma$-bundles in $\hSh$, i.e. every principal $\Gamma$-bundle $E\to B$ is a pullback of $W\Gamma \to \bar{W}\Gamma$ via some classifying map $B\to \bar{W}\Gamma$ in $\hSh$. Hence $E\to B$ is also a fibration in $\hSh$. In particular, the map $\Gamma \to *$ is a principal $\Gamma$-bundle and hence a fibration in $\hSh$.
\end{proof}

It remains to generalize this result to simplicial profinite groupoids. If $\Gamma$ is a profinite groupoid, a profinite module $\Mh$ over $\Gamma$ is a functor from $\Gamma$ to the category of abelian profinite groups such that the profinite group $\Gamma(x,x)$ acts continuously on the profinite abelian group $\Mh(x)$ for every object $x$ of $\Gamma$. Given such a module $\Mh$ and an integer $n\geq 0$, the Eilenberg-MacLane object $K(\Mh,n)$ is the profinite space which has as $k$-simplices the pairs $(u,v)$ such that $u$ is a $k$-simplex $x_0 \to x_1 \to \ldots \to x_k$ of the profinite nerve $B\Gamma$ and $v$ is a $k$-simplex of the profinite Eilenberg-MacLane space $K(\Mh(x_0),n)$, see \cite{dk2}, 1.2 (iv). There is the forgetful map $K(\Mh,n)\to B\Gamma$ in $\hSh$. 
An example is given by the profinite fundamental groupoid $\Pi_1X$ of a profinite space $X$. The objects of $\Pi_1X$ are the vertices of $X$ and the higher homotopy groups define a profinite module $\Pi_nX$ over $\Pi_1X$ defined by sending $x\in X_0$ to the profinite $\pi_1(X,x)$-module $\Pi_nX(x)=\pi_n(X,x)$.\\
\begin{prop}\label{wepf}
A map $f:X\to Y$ is a weak equivalence in $\hSh$ if and only if the induced maps $f^0:H^0(Y;S) \to H^0(X;S)$ for every profinite set $S$, $f^1:H_{\mathrm{cts}}^1(Y;\Gamma) \to H_{\mathrm{cts}}^1(X;\Gamma)$ for every every profinite group $\Gamma$ and $f^{\ast}:H_{\mathrm{cts}}^q(Y;\Mh) \to H_{\mathrm{cts}}^q(X;f^{\ast}\Mh)$ is an isomorphism for every continuous local coefficient system $\Mh$ of profinite abelian groups on $Y$ for every $q\geq 0$.
\end{prop}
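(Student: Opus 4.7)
The backward direction is immediate: finite sets, finite groups, and finite local coefficient systems are special cases of profinite ones, so the three cohomology-isomorphism conditions of the proposition specialize directly to those of Definition \ref{defnwe}. For the forward direction, the plan is to realize each profinite coefficient object as a cofiltered inverse limit of its finite quotients, to identify the continuous cohomology of $X$ (resp.\ $Y$) with the inverse limit of its cohomologies with the finite quotient coefficients, and then to invoke the finite-coefficient isomorphisms granted by Definition \ref{defnwe} together with a Mittag--Leffler $\lim^1$-vanishing argument.

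For the $H^{0}$ part, the formula for $\delta^{0}$ identifies $H^{0}_{\mathrm{cts}}(X;S)$ with the equalizer of $d_0,d_1\colon X_1\rightrightarrows X_0$ landing in $S$, i.e.\ with $\Hom_{\hEh}(\pi_0 X,S)$, so the isomorphism $\pi_0(f)\colon\pi_0 X\cong\pi_0 Y$ of Definition \ref{defnwe} yields the claim. For $H^{1}$, write $\Gamma=\lim_U \Gamma/U$ over open normal subgroups. Representability $Z^{1}_{\mathrm{cts}}(X;\Gamma)\cong\Hom_{\hSh}(X,B\Gamma)$ together with $B\Gamma\cong\lim_U B(\Gamma/U)$ in $\hSh$ gives $Z^{1}_{\mathrm{cts}}(X;\Gamma)\cong\lim_U Z^{1}(X;\Gamma/U)$ and analogously for $B^{1}$. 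A Mittag--Leffler argument on the tower of finite pointed sets $\{H^{1}(X;\Gamma/U)\}_U$ then produces $H^{1}_{\mathrm{cts}}(X;\Gamma)\cong\lim_U H^{1}(X;\Gamma/U)$, which combined with the finite-coefficient isomorphism of Definition \ref{defnwe} yields that $f^{1}$ is an isomorphism.

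For $H^{q}$ with profinite local coefficients, write $\Mh=\lim_U \Mh/U$ as a cofiltered inverse limit of finite local coefficient systems, using that each stalk is a continuous profinite $\pi_1(Y,y)$-module and thus an inverse limit of finite submodules. The complex of continuous natural transformations then decomposes as $\hom_{\Pi Y}((\tilde Y,-),\Mh)\cong\lim_U \hom_{\Pi Y}((\tilde Y,-),\Mh/U)$, a tower of cochain complexes with levelwise surjective transition maps. The Milnor $\lim^{1}$-exact sequence reads
$$0 \longrightarrow {\lim}^{1}_U H^{q-1}(Y;\Mh/U) \longrightarrow H^{q}_{\mathrm{cts}}(Y;\Mh) \longrightarrow \lim_U H^{q}(Y;\Mh/U) \longrightarrow 0.$$
Each $H^{q-1}(Y;\Mh/U)$ is a finite abelian group, so Mittag--Leffler makes the $\lim^{1}$-term vanish, giving $H^{q}_{\mathrm{cts}}(Y;\Mh)\cong\lim_U H^{q}(Y;\Mh/U)$ and likewise for $X$ with the pulled-back system. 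Since $f^{*}$ is an isomorphism on each finite level by Definition \ref{defnwe}, passage to the inverse limit yields the desired isomorphism on continuous cohomology.

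The main obstacle is the non-abelian $H^{1}$ step: making rigorous the representability-and-limit identification and handling the Mittag--Leffler argument for pointed sets with surjective transitions, where one does not have the luxury of an exact $\lim^{1}$-sequence. A secondary care point is verifying that a continuous profinite local coefficient system genuinely admits a presentation as a cofiltered limit of finite local coefficient systems compatibly with the profinite fundamental groupoid action, which requires unpacking the continuity of the action stalkwise.
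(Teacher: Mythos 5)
Your reduction of the profinite-coefficient case to the finite one breaks down at its key step. You assert that each $H^{q-1}(Y;\Mh/U)$ is a finite abelian group and use Mittag--Leffler to kill the $\lim^1$-term; but continuous cohomology of a profinite space with \emph{finite} (local) coefficients need not be finite. Already $H^0_{\mathrm{cts}}(Y;\Z/p)=\Hom_{\hEh}(\pi_0Y,\Z/p)$ is infinite when $\pi_0Y$ is an infinite profinite set, and for the connected space $Y=B\Gamma$ with $\Gamma=\prod_{\N}\Z/p$ one has $H^1_{\mathrm{cts}}(Y;\Z/p)=\Hom_{\mathrm{cts}}(\Gamma,\Z/p)\cong\bigoplus_{\N}\Z/p$, which is infinite. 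So the $\lim^1$-vanishing is unjustified, and with it the identification $H^q_{\mathrm{cts}}(Y;\Mh)\cong\lim_U H^q(Y;\Mh/U)$. Moreover, the system of finite quotients of $\Mh$ is a general cofiltered system, not a countable tower, so the Milnor sequence you write down is not literally available; one must confront all higher derived limits $\lim^s$. The paper avoids precisely this: it expresses continuous cohomology through homotopy groups of mapping spaces, $H^{n-q}_{\mathrm{cts}}(X;\Mh)=\pi_q\hom_{\hSh/B\Pi}(X,K(\Mh,n))$ and $H^1(X;\Gamma)=\pi_0\hom_{\hShp}(X,B\Gamma)$, and then compares $X$ and $Y$ via a Bousfield--Kan spectral sequence (as in \cite{dwyfried}, Proposition 2.9) whose $E_2$-terms are the derived limits of the finite-coefficient cohomologies. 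Since the finite-coefficient hypotheses make those $E_2$-terms isomorphic, no vanishing (and no finiteness) is needed. Your argument could be repaired in this spirit --- compare the derived-limit terms for $X$ and $Y$ rather than making them vanish --- but as written the step fails; the same applies to your nonabelian $H^1$ step, which you rightly flag as the delicate one and which the paper also treats by the mapping-space/spectral-sequence device rather than by a pointed-set Mittag--Leffler argument.

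A second, smaller issue: the backward direction is not a matter of the conditions ``specializing directly'' to Definition \ref{defnwe}, because that definition asks for isomorphisms on $\pi_0$ and $\pi_1$, not on $H^0$ and $H^1$. To recover these you need $H^0(X;S)=\Hom_{\hEh}(\pi_0X,S)$ together with a Yoneda-type argument, and the fact that $\pi_1(f)$ is an isomorphism if and only if $H^1_{\mathrm{cts}}(f;\Gamma)$ is an isomorphism for every finite group $\Gamma$ (Lemma 2.9 of \cite{profinhom}); the latter is also what legitimizes your appeal to ``the finite-coefficient isomorphism of Definition \ref{defnwe}'' in the forward $H^1$ step. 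These are exactly the facts the paper cites at this point.
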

\begin{proof}
From $H^0(X;S)= \Hom_{\hEh}(\pi_0(X),S)$ for every finite set $S$, we conclude that $\pi_0(f)$ is an isomorphism if and only if $H^0(f;S)$ is an isomorphism for every profinite set $S$. So we can assume $X$ and $Y$ are connected. From \cite{profinhom}, Lemma 2.9, we get that $\pi_1(f)$ is an isomorphism if and only if $H_{\mathrm{cts}}^1(f;\Gamma)$ is an isomorphism for every finite group $\Gamma$. Hence the if-part of the assertion is proved. It remains to show that the statement extends from finite to profinite coefficients. This can be shown using a spectral sequence that relates local cohomology with finite and with profinite coefficients by noting that continuous cohomology can be expressed in the followingway by homotopy groups of mapping spaces. For any profinite module $\Mh$ over $\Pi=\Pi_1X$, there is an isomorphism 
$$H_{\mathrm{cts}}^{n-q}(X;\Mh)=\pi_q\hom_{\hSh/B\Pi}(X,K(\Mh,n)),$$
where $\pi_q$ denotes the usual homotopy group of the space $\hom_{\hSh/B\Pi}(X,K(\Mh,n))$ of maps in $\hSh$ over $B\Pi$. For an arbitrary profinite group $\Gamma$ there is a bijection of pointed sets 
$$H^{1}(X;\Gamma)=\pi_0\hom_{\hShp}(X,B\Gamma).$$ 
Then we can construct a Bousfield-Kan spectral sequence as in \cite{dwyfried}, Proposition 2.9., that yields the comparison of coefficients.
\end{proof}

\begin{prop}\label{sgdfib}
Let $\Gamma$ be a simplicial profinite groupoid. The profinite classifying space $\bar{W}\Gamma$ is fibrant in $\hSh$.
\end{prop}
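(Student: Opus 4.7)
The plan is to mimic the strategy of Proposition \ref{sgfib} but with the profinite fundamental groupoid $\Pi_1\bar{W}\Gamma$ playing the role of the fundamental group and the higher $\Pi_n\bar{W}\Gamma$ playing the role of the homotopy group modules. First I would reduce to the case of a simplicial finite groupoid by writing $\Gamma = \lim_U \Gamma/U$ as a cofiltered limit of its simplicial finite quotient groupoids, observing that $\bar{W}$, the normalized complex $N\Gamma$, the coskeleton $\cosk_n$, and formation of homotopy groups all commute with this limit exactly as in the group case. Since limits of towers of fibrations are fibrations in $\hSh$, it suffices to show that for each $U$ the space $\bar{W}(\Gamma/U)$ is fibrant and that the structure maps assemble into fibrations at each finite stage.

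Second I would use the Postnikov-type decomposition $\bar{W}\Gamma \cong \lim_n \cosk_n\bar{W}\Gamma$ and argue stage by stage. The map $\cosk_2\bar{W}\Gamma \to *$ factors through the classifying space $B\Pi_1\bar{W}\Gamma$ of the profinite fundamental groupoid, which decomposes as a disjoint union over the connected components $[x] \in \pi_0\Gamma$ of classifying spaces $B\Gamma(x,x)$ of profinite groups; each of those is fibrant by Proposition \ref{sgfib}, so their disjoint union in $\hSh$ is fibrant too. For $n \geq 2$, I would produce the analogue of the pullback square (\ref{coskpb}), namely
\begin{equation*}
\xymatrix{
\cosk_{n+1}\bar{W}\Gamma \ar[d] \ar[r] & L(\Pi_n\bar{W}\Gamma, n+1) \ar[d]^q \\
\cosk_n\bar{W}\Gamma \ar[r]_-{k_n} & K(\Pi_n\bar{W}\Gamma, n+1),
}
\end{equation*}
over $B\Pi_1\bar{W}\Gamma$, using the groupoid version of $K(\Mh,n)$ introduced just before the statement and its contractible cover $L(\Mh,n) = WK(\Mh,n)$. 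The module $\Pi_n\bar{W}\Gamma$ is continuous over the fundamental groupoid by the limit argument from the first step (each $\Pi_n(\bar{W}(\Gamma/U))$ is a finite module), so the $k$-invariant $k_n$ makes sense as a continuous map in $\hSh$.

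The main obstacle is showing that the vertical map $q \colon L(\Mh,n+1) \to K(\Mh,n+1)$ is a fibration in $\hSh$ when $\Mh$ is a continuous profinite module over a profinite groupoid, since the generating fibrations listed in \cite{gspaces} are formulated with a profinite group rather than a groupoid. I would handle this by decomposing $\Pi_1\bar{W}\Gamma$ into its components: over each component $[x]$, the restriction of $q$ is exactly the homotopy-orbit fibration $E\pi \times_\pi L(M,n+1) \to E\pi \times_\pi K(M,n+1)$ for $\pi = \Gamma(x,x)$ and $M = \Pi_n\bar{W}\Gamma(x)$, which is a generating fibration in $\hSh$. Taking the disjoint union over $\pi_0\Gamma$ (which is a profinite set, and disjoint unions over profinite index sets are built into $\hSh$) shows that $q$ is a fibration, and stability of fibrations under pullback then yields that $\cosk_{n+1}\bar{W}\Gamma \to \cosk_n\bar{W}\Gamma$ is a fibration.

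Combining the two steps gives that $\bar{W}\Gamma \to *$ is a composition of a fibration and a limit of a tower of fibrations, hence a fibration; equivalently, $\bar{W}\Gamma$ is fibrant in $\hSh$. I would check the generalized weak-equivalence criterion of Proposition \ref{wepf} along the way to confirm that the identification of homotopy groups with $H_{n-1}(N\Gamma)$ used in the argument is compatible with the continuous profinite structure, but this is essentially the same calculation as in the group case carried out levelwise.
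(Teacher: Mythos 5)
Your overall skeleton (coskeleton tower, groupoid Eilenberg--MacLane objects, a pullback square encoding the $k$-invariant) matches the paper's, but the step you yourself flag as the main obstacle is where the argument breaks. You propose to prove that $q\colon L(\Mh,n+1)\to K(\Mh,n+1)$ is a fibration for a profinite groupoid by splitting $\Pi_1\bar{W}\Gamma$ into its components and then ``taking the disjoint union over $\pi_0\Gamma$'', asserting that disjoint unions over profinite index sets are built into $\hSh$. They are not: an infinite profinite set is not the coproduct of its points in $\hEh$, and a profinite groupoid with infinite profinite $\pi_0$ is not the coproduct in $\hSh$ of its component subgroupoids. There need not even be a continuous choice of representatives $x\in[x]$ (a continuous section of $\mathrm{Ob}(\Gamma)\to\pi_0\Gamma$), nor a continuous choice of the retractions onto the vertex groups that your identification of the restriction of $q$ with $E\pi\times_{\pi}L(M,n+1)\to E\pi\times_{\pi}K(M,n+1)$ implicitly requires (even componentwise that restriction is not ``exactly'' the generating fibration, only a pullback of it after such a choice). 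So the componentwise argument is valid only when $\Gamma$ is a simplicial finite groupoid. For general profinite $\Gamma$ the paper argues differently: it shows that $q$ lies in the saturation of the generating set $P$ of fibrations directly, via the lifting property together with Proposition \ref{wepf}, which is precisely what permits profinite (rather than finite) coefficient systems over a profinite fundamental groupoid.

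Your first-step reduction does not close this gap either. The limit $\Gamma=\lim_U\Gamma/U$ is cofiltered, not a tower, and to deduce fibrancy of $\bar{W}\Gamma=\lim_U\bar{W}(\Gamma/U)$ from fibrancy of the finite stages you would need at least that the transition maps $\bar{W}(\Gamma/U')\to\bar{W}(\Gamma/U)$ are fibrations in $\hSh$ and that such cofiltered limits of fibrations are again fibrations; you assert neither, and the paper does not argue this way. Instead, the paper uses the decomposition $\Gamma=\lim_U\Gamma/U$ only to check that the Postnikov data of $\bar{W}\Gamma$ itself are continuous --- $\Pi_n$ is a profinite module over the profinite groupoid $\Pi_1$ and the $k$-invariant is a continuous relative cocycle --- so that diagram (\ref{coskpbgd}) is a pullback square in $\hSh$ for the profinite $\Gamma$; fibrancy then follows from the tower $\bar{W}\Gamma\cong\lim_n\cosk_n\bar{W}\Gamma$, whose maps are fibrations because $q$ is. Repairing your proof essentially amounts to replacing the component decomposition by this ``saturation plus Proposition \ref{wepf}'' step, keeping your finite-groupoid analysis as the first half of that argument.
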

\begin{proof}
The proof is basically the same as for a simplicial profinite group. 
We start with a simplicial finite groupoid $\Gamma$ and let $\Pi_1$ be the fundamental finite groupoid of $\bar{W}\Gamma$. We denote by $\Pi_n$ the finite $\Pi_1$-module of $\bar{W}\Gamma$ defined as in the example above. We know that the underlying simplicial set of $\bar{W}\Gamma$ is a fibrant object in $\Sh$ such that, for every object $x$, the profinite space $\bar{W}\Gamma(x,x)$ is minimal fibrant in $\hSh$. Together with the theory of Postnikov towers this implies that there is a pullback square of simplicial finite sets
\begin{equation}\label{coskpbgd}
\xymatrix{
\cosk_{n+1}\bar{W}\Gamma \ar[d] \ar[r] & \hocolim_{\Pi_1} L(\Pi_n,n+1) \ar[d]^q \\
\cosk_n \bar{W}\Gamma \ar[r]_{k_n} & \hocolim_{\Pi_1} K(\Pi_n,n+1).}
\end{equation}
The $k$-invariant $k_n$ is given as a relative cocycle in $Z^{n+1}(\cosk_n\bar{W}\Gamma; \Pi_n)$ over $B\Pi_1$ as the map that assigns to every $n+1$-simplex $w: \sk_n\Delta^{n+1}\to \bar{W}\Gamma$ the corresponding element in $\Pi_n(w(0))=\pi_n(\bar{W}\Gamma,w(0))$.
It follows from the definition of weak equivalences and the lifting properties that the map $q$ on the right is contained in the saturation of the generating set $P$ of fibrations in $\hSh$. Hence the map $\cosk_{n+1}\bar{W}\Gamma \to \cosk_n \bar{W}\Gamma$ is a fibration in $\hSh$ for $n\geq 2$. Moreover, $\cosk_2\bar{W}\Gamma$ is $B\Pi_1$ which is fibrant in $\hSh$. Hence $\bar{W}\Gamma$ is a fibrant profinite space.\\ 
%
%
For a simplicial profinite groupoid $\Gamma$, we observe again that all diagrams and objects involved commute with the profinite structure of $\Gamma$. Hence we obtain pullback diagrams in $\hSh$. Since the map $q$ in diagram (\ref{coskpbgd}) is contained in the saturation of the generating fibrations of $\hSh$ for profinite $\Pi_1$ and $\Pi_n$ by Proposition \ref{wepf}, we can conclude that all maps in the tower of coskeleta of $\bar{W}\Gamma$ are fibrations in $\hSh$. This finishes the proof.
\end{proof}
\begin{remark}
Note that the arguments above do not, of course, show that every profinite space, whose underlying simplicial set is fibrant in $\Sh$, is also fibrant as an object in $\hSh$. In the proofs of the two propositions we have used very special properties of the profinite space $\bar{W}\Gamma$. In particular, we used the minimality of the functor $\bar{W}$ and that the homotopy groups of $\bar{W}\Gamma$ (or rather its underlying simplicial set) commute with the limit structure of $\Gamma$ as a simplicial profinite group or groupoid. As we pointed out at numerous places, the last property is not satisfied by a general profinite space.
\end{remark}
\subsection{A fibrant replacement functor in $\hSh$}\label{secfibrep}
As indicated by Morel for pro-$p$-completion of spaces in \cite{ensprofin}, \S 2.1, p. 367, the constructions above yield an explicit fibrant replacement functor in $\hSh$. This idea is based on the work of Quillen in \cite{quillen2} and of Rector in \cite{rector}.\\
First, let $X$ be a reduced simplicial finite set and let $\hat{\Gamma}X$ be its free simplicial profinite loop group. Its profinite classifying space $\bar{W}\hat{\Gamma}X$ is a fibrant profinite space by Proposition \ref{sgfib} and is equipped with the canonical map $\heta: X \to \bar{W}\hat{\Gamma}X$.\\
Second, let $X$ be an arbitrary simplicial finite set and let $\hat{\Gamma}X$ be its free simplicial profinite loop groupoid. The profinite classifying space $\bar{W}\hat{\Gamma}X$ is fibrant in $\hSh$ by Proposition \ref{sgdfib} and is equipped with the canonical map $\heta: X \to \bar{W}\hat{\Gamma}X$ in $\hSh$. If $X$ is an arbitrary profinite space, it is isomorphic in $\hSh$ to the limit $\lim_R X/R$ where $R$ runs through the simplicial open equivalence relations on $X$. Its free simplicial profinite groupoid is $\hat{\Gamma}X = \lim_R \hat{\Gamma}(X/R)$.
Then we apply $\bar{W}$ to get a fibrant profinite space $R_fX$ equipped with a canonical map in $\hSh$ 
$$\heta: X\to R_fX:=\bar{W}\hat{\Gamma}X=\lim_R \bar{W}\hat{\Gamma}(X/R).$$
Since the construction of $\heta$ is natural in $X$, the following theorem justifies to call $R_fX$ a functorial fibrant replacement of $X$. 
\begin{theorem}\label{fibrep}
Let $X$ be a profinite space. The map  $\heta: X \to \bar{W}\hat{\Gamma}X$ is a trivial cofibration in $\hSh$ and $\bar{W}\hat{\Gamma}X$ is a fibrant profinite space.
\end{theorem}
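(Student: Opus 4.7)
The plan splits into three parts. Fibrancy of $\bar{W}\hat{\Gamma}X$ is already Proposition~\ref{sgdfib} applied to the simplicial profinite groupoid $\hat{\Gamma}X$, so nothing further is required there. For the cofibration property I would check levelwise injectivity. By construction $\heta$ is the cofiltered limit over $R \in \Rh(X)$ of the maps $\heta/R : X/R \to \bar{W}\hat{\Gamma}(X/R)$, and each $\heta/R$ factors as the classical map $X/R \to \bar{W}\Gamma(X/R)$ (a levelwise monomorphism by a standard check on generators and faces) followed by the groupoid-level profinite completion $\bar{W}\Gamma(X/R) \to \bar{W}\hat{\Gamma}(X/R)$. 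In each simplicial degree the latter is a product of completion maps $F \to \hat{F}$ for free groups $F$, which is injective by residual finiteness of free groups. So $\heta/R$ is levelwise injective, and monomorphisms of profinite sets are stable under cofiltered limits, so $\heta$ itself is a cofibration.

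For the weak equivalence I would first treat the case when $X$ is a simplicial finite set and then pass to the limit. In the simplicial finite case, $\heta$ factors as the classical Dwyer--Kan weak equivalence $\eta : X \to \bar{W}\Gamma X$ in $\Sh$ followed by the simplicial-groupoid completion $c : \bar{W}\Gamma X \to \bar{W}\hat{\Gamma}X$. Using the testing criterion of Definition~\ref{defnwe}, the composition clearly induces an isomorphism on $\pi_0$, since both sides have $\pi_0 X$ as set of components. On the profinite $\pi_1$ at a vertex $x$, the target group is $\hat{\Gamma}X(x,x)$, which is by construction the profinite completion of the free group $\Gamma X(x,x) = \pi_1(X,x)$; the source is $\pi_1$ of the profinite space $X$, which equals $\widehat{\pi_1(X,x)}$ by Proposition~\ref{profinitecompletion}, so $\pi_1(\heta)$ is an isomorphism. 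The hard step is the isomorphism on continuous cohomology with finite local coefficients. Here I would invoke Serre's theorem that free groups are good, together with the Postnikov tower for $\bar{W}\hat{\Gamma}X$ built in the proof of Proposition~\ref{sgdfib}: at each stage the classifying $k$-invariant lives in continuous cohomology with values in a profinite $\pi_1$-module $\Pi_n$ that is the profinite completion of the corresponding $\pi_1(\bar{W}\Gamma X)$-module, and goodness of the free fiber groups together with an induction on the Postnikov stage let one compare the two towers level by level via Proposition~\ref{cor6.6}.

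For an arbitrary profinite $X$, the identity $X \cong \lim_R X/R$ and the definition $\bar{W}\hat{\Gamma}X = \lim_R \bar{W}\hat{\Gamma}(X/R)$ exhibit $\heta$ as a cofiltered limit of the maps $\heta/R$, each of which is a weak equivalence between a simplicial finite set and a fibrant profinite space by the previous step. Applying the refined testing criterion of Proposition~\ref{wepf}, which allows profinite coefficients, I would verify that each relevant invariant commutes with this cofiltered limit: $\pi_0$ is exact on cofiltered limits of profinite sets; continuous $H^1$ with profinite group coefficients is computed by a mapping space into $B\Gamma$ and commutes with the limit; and continuous higher cohomology with profinite local coefficients inherits the limit structure from the tower of coskeleta already analyzed in Proposition~\ref{sgdfib}. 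The main obstacle is the usual one in such limit arguments, namely potential $\lim^1$ contributions in cohomology; these vanish automatically because each $H^q_{\mathrm{cts}}(X/R; M)$ with finite $M$ is a finite abelian group, so the Mittag-Leffler condition holds for the relevant inverse system.
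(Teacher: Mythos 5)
Most of your outline is sound: fibrancy via Proposition~\ref{sgdfib}, the levelwise injectivity of $\heta$ (you give more detail than needed), the $\pi_0$ and $\pi_1$ statements (modulo the slip that $\Gamma X(x,x)$ is a simplicial group, not a group, and that $\pi_1(X,x)$ is not free in general -- what you actually need is Proposition~\ref{profinitecompletion} together with the fact that $\pi_0$ commutes with the cofiltered limit of the finite quotients of $\hat{\Gamma}X$), and the passage to the limit over $R\in\Rh(X)$, which the paper handles by quoting homotopy invariance of limits in $\hSh$ and which your colimit/finiteness argument could replace. The genuine gap is in the cohomology comparison for a simplicial finite set $X$, which is the heart of the theorem. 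You propose to compare the Postnikov towers of $\bar{W}\Gamma X$ and $\bar{W}\hat{\Gamma}X$ stage by stage via Proposition~\ref{cor6.6}, using that the coefficient module $\Pi_n$ of $\bar{W}\hat{\Gamma}X$ is the profinite completion of $\pi_n(\bar{W}\Gamma X)\cong\pi_n X$. That identification is false in general: $\pi_n(\bar{W}\hat{\Gamma}X)=\lim_U\pi_{n-1}(\Gamma X/U)$, and the canonical map $\widehat{\pi_n X}\to\pi_n(\bar{W}\hat{\Gamma}X)$ need not be an isomorphism for $n\geq 2$ -- this is precisely the failure of completion of spaces to commute with completion of groups emphasized in \S 3.2, \S 3.6 and \S 3.7 (for nilpotent spaces there is already a derived-functor correction term $\hat{L}_1(\pi_{n-1}X)$). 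Moreover, a level-by-level comparison in the style of Proposition~\ref{cor6.6} would require goodness of $\pi_1X$ and of the higher homotopy modules of $X$ in the sense of Sullivan, which fails for a general (finite) simplicial set; and the ``free fiber groups'' you invoke are not the fibers of the Postnikov tower -- those are Eilenberg--MacLane objects on the (non-free) homotopy groups.

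The correct use of the goodness of free groups is at the level of the simplicial group(oid), not the Postnikov tower, and this is the paper's key step: following Quillen, call a simplicial group $\Gamma$ good if $H_{\mathrm{cts}}^*(\hat{\Gamma};M)\to H^*(\Gamma;M)$ is an isomorphism for all finite discrete coefficients; a spectral sequence over the simplicial degree shows that $\Gamma$ is good as soon as each $\Gamma_n$ is a good group, and since free groups are good and $\Gamma X$ is degreewise free, $\Gamma X$ is a good simplicial group. This yields directly
$$H_{\mathrm{cts}}^*(\bar{W}\hat{\Gamma}X;M)\cong H_{\mathrm{cts}}^*(\hat{\Gamma}X;M)\cong H^*(\Gamma X;M)\cong H^*(\bar{W}\Gamma X;M)\cong H^*(X;M)$$
for every finite local coefficient system $M$, with no identification of higher homotopy groups needed (one first reduces to connected, or reduced, $X$ using the deformation retract of $\hat{\Gamma}X$ onto the disjoint union of its automorphism groups, as in Lemma~\ref{sghe}). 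Without this ingredient, or an equivalent substitute, your induction does not close, so the central step of the proof is missing.
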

Before we start the proof, we need the following fact about simplicial groupoids due to Goerss and Jardine, cf. \cite{gj}, V \S 7, pp. 316-317. Let $\Gamma$ be a simplicial groupoid. Picking a representative $x\in [x]$ for each $[x] \in \pi_0\Gamma$, defines a map of simplicial groupoids
$$i: \bigsqcup_{[x]\in \pi_0\Gamma} \Gamma(x,x) \to \Gamma$$
from the disjoint union of simplicial groups $\Gamma(x,x)$ to $\Gamma$. This map is not only a weak equivalence of simplicial groupoids but $\bigsqcup_{[x]\in \pi_0\Gamma} \Gamma(x,x)$ is a deformation retract of $\Gamma$. In particular, the map $i$ is a homotopy equivalence of simplicial groupoids, see \cite{gj}, V \S 7, pp. 316-317, for a definition of a groupoid homotopy. We will need the following consequence for the completion of a groupoid.
\begin{lemma}\label{sghe}
Let $\Gamma$ be a simplicial groupoid with a finite set of objects and let $\hat{\Gamma}$ be its profinite completion. Then, for any choice of representatives $x\in [x]$ for $[x] \in \pi_0\Gamma=\pi_0\hat{\Gamma}$, the induced map 
$$i: \bigsqcup_{[x]\in \pi_0\hat{\Gamma}} \hat{\Gamma}(x,x) \to \hat{\Gamma}$$
is still a homotopy equivalence of simplicial groupoids.
\end{lemma}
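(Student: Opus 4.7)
The plan is to lift the classical deformation retraction recalled just above from $\Gamma$ to its profinite completion by noticing that both the retraction and the homotopy it fits into are built from data that is preserved by the cofiltered system of finite quotients defining $\hat\Gamma$. First, I fix the combinatorial data explicitly: for every object $y\in\Ob\Gamma$, choose a morphism $h_y\colon y\to x_{[y]}$ in $\Gamma_0$ with $h_{x_{[x]}}=\id$ on the chosen representatives. Iterated degeneracies produce morphisms $h_y^{(n)}\colon y\to x_{[y]}$ in each $\Gamma_n$, and the retraction $r\colon \Gamma\to \bigsqcup_{[x]}\Gamma(x,x)$ is defined on an $n$-simplex $f\colon y\to z$ (necessarily with $[y]=[z]$) by $f\mapsto h_z^{(n)}\circ f\circ (h_y^{(n)})^{-1}$. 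The homotopy $i\circ r\Rightarrow \id_\Gamma$ is then the simplicial natural transformation whose component at $y$ in degree $n$ is $(h_y^{(n)})^{-1}$, which is precisely the structure underlying the deformation retraction cited from Goerss--Jardine.

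Second, I would observe that the only operations used to build $r$ and this natural transformation are composition, inversion, and the chosen morphisms $h_y$ themselves. Hence every finite quotient $\bar\Gamma$ appearing in the inverse system defining $\hat\Gamma$ inherits from the images $\bar h_y$ a compatible retraction $\bar r$ and a compatible homotopy. The finiteness of $\Ob\Gamma$ is crucial here: it implies that $\pi_0$ is not altered when passing to any finite quotient nor, by taking limits, to $\hat\Gamma$, so the identification $\pi_0\hat\Gamma=\pi_0\Gamma$ and the choice of representatives $x\in[x]$ make sense uniformly across the whole cofiltered system.

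Third, I would pass to the inverse limit. Since each $\hat\Gamma_n$ is by definition the limit of its finite quotient groupoids and since the $\bar r$ and $\bar h_y^{-1}$ are natural in the quotient, they assemble into a simplicial functor $\hat r\colon \hat\Gamma\to \bigsqcup_{[x]}\hat\Gamma(x,x)$ together with a simplicial natural isomorphism $\hat i\circ\hat r\Rightarrow \id_{\hat\Gamma}$. The equality $\hat r\circ\hat i=\id$ holds already at the finite level from the normalization $h_{x_{[x]}}=\id$, and hence persists in the limit.

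The point to verify, rather than a genuine obstacle, will be that the natural transformation produced in this way really is a homotopy of simplicial groupoids, i.e. is encoded by a simplicial functor out of $\hat\Gamma\times I$ for the interval groupoid $I$. This reduces to the observation that profinite completion of groupoids with finite object set preserves finite products, so that the homotopy functors $\bar\Gamma\times I\to\bar\Gamma$ at finite stages glue to a well-defined functor on $\hat\Gamma\times I$.
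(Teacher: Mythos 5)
Your proof is correct and takes essentially the route the paper intends: the paper states this lemma without a separate argument, presenting it as a consequence of the Goerss--Jardine deformation retraction, and your write-up just makes explicit that the retraction $r$ and the conjugating homotopy are built from composition with the (degeneracies of the) chosen morphisms $h_y$, data which descends to every finite quotient in the system defining $\hat{\Gamma}$ and hence assembles continuously in the limit. Nothing essential is missing.
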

Now we can start the proof of Theorem \ref{fibrep}.
\begin{proof}
It is clear that $\heta$ is a cofibration, i.e.\,a monomorphism in each level. That $\bar{W}\hat{\Gamma}X$ is a fibrant profinite space follows from Proposition \ref{sgdfib}.
In order to show that it is a weak equivalence in $\hSh$ we can assume that $X$ is a simplicial finite set. For, $\heta$ is defined as the limit of the maps $\heta/R:X/R \to \bar{W}\hat{\Gamma}(X/R)$. If we show that each $\heta/R$ is a weak equivalence, then the homotopy invariance of limits in $\hSh$, Proposition 2.14 in \cite{profinhom}, implies that $\heta$ is a weak equivalence in $\hSh$ as well.\\ 
So let $X$ be a simplicial finite set. 
The set of objects $\mathrm{Ob}(\Gamma X)$ is equal to the set of vertices $X_0$ of $X$. Since this set is finite, we also have $\mathrm{Ob}(\hat{\Gamma} X)=X_0$ and $\pi_0\hat{\Gamma}X=\pi_0X$. Since the functor $\bar{W}$ preserves disjoint unions, there is a commutative diagram in $\hSh$
$$\xymatrix{
\bigsqcup_{[x]\in \pi_0X} X_x \ar[d] \ar[r] & \bigsqcup_{[x]\in \pi_0\hat{\Gamma}X} \bar{W}(\hat{\Gamma}X(\heta(x), \heta(x))) \ar[d]^{\bar{W}(i)} \\
X \ar[r] & \bar{W}\hat{\Gamma}X}$$
where $X_x$ denotes the connected component of $X$ corresponding to $x$. By Lemma \ref{sghe}, the vertical map $i$ on the right hand side is a homotopy equivalence of simplicial groupoids. As explained in \cite{gj}, V Proof of Theorem 7.8, this implies that $\bar{W}(i)$ is a homotopy equivalence of simplicial sets. Hence $\bar{W}(i)$ is also a weak equivalence in $\hSh$ by invariance of fundamental groups and cohomology under homotopy. Since the left vertical map is a weak equivalence, we conclude that it suffices to prove the assertion for each vertex of $X$ separately. Thus we can assume that $X$ is a reduced simplicial finite set.\\
We know from \cite{gj}, Proposition 6.3, that $\eta: X \to \bar{W}\Gamma X$ is a weak equivalence in $\Sh$. To be able to deduce from this a statement about $\heta$ we have to take into account the effect of profinite completion.\\
For this proof only, we will use the notation $\hat{\pi}_1X$ to denote the profinite fundamental group of $X$ considered as an object in $\hSh$ to distinguish it from its fundamental group as an object in $\Sh$. 
We know that $\eta$ induces an isomorphism of fundamental groups of simplicial sets $\pi_1X \cong \pi_1 \bar{W}\Gamma X = \pi_0 (\Gamma X)$. 
The profinite fundamental group $\hat{\pi}_1X$ of $X$ is the profinite completion of the fundamental group of $X$ as an object in $\Sh$ by Proposition \ref{profinitecompletion}. Similarly, the profinite fundamental group of $\bar{W}\hat{\Gamma}X$ is just the completion of $\pi_1\bar{W}\Gamma X$, since $\pi_0$ commutes with filtered inverse limits of simplicial finite groups and 
$$\hat{\pi}_1 \bar{W}\hat{\Gamma}X= \pi_0\hat{\Gamma}X = \lim_U \pi_0 (\Gamma X)/U = \lim_U (\pi_1 X)/U.$$
Hence $\heta$ induces an isomorphism on profinite fundmental groups.\\
It remains to show that $\heta$ induces an isomorphism on cohomology with finite local coefficients. Since $X$ is reduced, finite local coefficient systems on $X$ are just finite discrete $\hat{\pi}_1X$-modules. 
If $\hat{\Gamma}$ a simplicial profinite group and $M$ is a continuous discrete $\pi_0\hat{\Gamma}$-module, the continuous cohomology of $\hat{\Gamma}$ with coefficients in $M$ is given by
$$H_{\mathrm{cts}}^*(\hat{\Gamma}; M) = \colim_U H^*(\bar{W}(\hat{\Gamma}/U); M^{\pi_0U}),$$
where the limit is taken over all open normal subgroups of $\hat{\Gamma}$ and $M^{\pi_0U}$ denotes the module of fixed elements under $\pi_0U$. Furthermore, if $\Gamma$ is a simplicial group, the profinite group completion map $\Gamma \to \hat{\Gamma}$ induces a canonical map 
$$H_{\mathrm{cts}}^*(\hat{\Gamma};M) \longrightarrow H^*(\Gamma;M)$$
for every finite discrete $\pi_0\hat{\Gamma}$-module. As for groups, this map is not an isomorphism in general. Quillen calls the simplicial group $\Gamma$ \emph{good}, if this map is an isomorphism for every finite discrete $\pi_0\hat{\Gamma}$-module, see \cite{quillen2}. Using a spectral sequence argument, one can show that a simplicial group $\Gamma$ is good if $\Gamma_n$ is a good group in the sense of Serre for all $n$.\\
Coming back to the proof of Theorem \ref{fibrep}, the crucial observation is that free groups are good, cf. \cite{quillen2}, Proposition 3.1. Since $\Gamma X_n$ is by definition a free group in each degree, we conclude that $\Gamma X$ is a good simplicial group. Thus we get an isomorphism
$$H_{\mathrm{cts}}^*(\hat{\Gamma}X;M) \stackrel{\cong}{\longrightarrow} H^*(\Gamma X;M)$$
for all continuous finite $\pi_0\Gamma X=\hat{\pi}_1X$-module $M$. 
We have seen above that the map $X\to \bar{W}\Gamma X$ is a weak equivalence of simplicial sets and hence $H^*(\Gamma X;M)=H^*(\bar{W}\Gamma X;M)=H^*(X;M)$. Finally, when $M$ is finite and $X$ is a simplicial finite set, the continuous cohomology $H_{\mathrm{cts}}^*(X;M)$ of $X$ agrees with the cohomology $H^*(X;M)$.
This completes the proof that $X\to \bar{W}\hat{\Gamma}X$ is a weak equivalence of profinite spaces.
\end{proof}
%
%
\subsection{Relationship to the work of Artin-Mazur, Morel and Sullivan}\label{secrelation}
In the previous subsection we have constructed a functorial fibrant replacement in $\hSh$. For a simplicial finite set $X\in \hSh$ it is given as the map $\eta: X\to \bar{W}\hat{\Gamma}X$. 
Now $\hat{\Gamma}X$ is by definition given as the (simplicial) profinite groupoid completion of the free simplicial groupoid $\Gamma X$. Hence $\hat{\Gamma}X$ is by definition a limit of simplicial finite groupoids. By taking the classifying space functor $\bar{W}$ we get a decomposition $\bar{W}\hat{\Gamma}X$ as a limit of simplicial finite sets fibrant in $\hSh$
$$\bar{W}\hat{\Gamma}X = \lim_U \bar{W}((\Gamma X)/U)$$
where $U$ runs through the simplicial normal subgroupoids of $\Gamma X$ such that the quotient $(\Gamma X)/U$ is a simplicial finite groupoid. As we have seen before, each of the $\bar{W}((\Gamma X)/U)$ is fibrant in $\hSh$ and hence in $\Sh$. Moreover, the homotopy groups of each $\bar{W}(\Gamma X)/U$ are finite by Lemma \ref{finitehomgroups} below. So after taking Postnikov sections $\cosk_n \bar{W}(\Gamma X)/U$ we get a decomposition into a limit of finite spaces which are also simplicial finite sets, i.e. a weak equivalence in $\hSh$
$$X \stackrel{\simeq}{\longrightarrow} \lim_{n,U} \cosk_n (\bar{W}((\Gamma X)/U)).$$
\begin{lemma}\label{finitehomgroups}
Let $X$ be a fibrant simplicial finite set. Then its homotopy groups $\pi_n(X,x)$ are finite groups for every $n\geq 0$ and every vertex $x$.
\end{lemma}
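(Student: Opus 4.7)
The plan is to use the combinatorial description of the homotopy groups of a Kan complex and then observe that the defining set of simplices is contained in a finite set. This makes the statement essentially immediate.

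First, since $X$ is fibrant in $\Sh$, it is a Kan complex, so its simplicial homotopy groups agree with the usual ones. For $n=0$, $\pi_0(X)$ is the quotient of $X_0$ by the equivalence relation $x\sim y$ iff there exists $\alpha\in X_1$ with $d_0\alpha=x$ and $d_1\alpha=y$; since $X_0$ is finite by hypothesis, $\pi_0(X)$ is a quotient of a finite set, hence finite.

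For $n\geq 1$ and a vertex $x\in X_0$, I would recall the standard combinatorial description of $\pi_n(X,x)$ for a Kan complex (see, e.g.\ \cite{gj}, I Definition 7.2), namely
\[
\pi_n(X,x)\;=\;\{\alpha\in X_n\,:\,d_i\alpha=s_0^{n-1}x\text{ for all }0\leq i\leq n\}\,/\sim,
\]
where $\alpha\sim\beta$ if there is a simplicial homotopy $H\in X_{n+1}$ between them rel $\partial\Delta^n$. The set on the right is a subset of $X_n$, which is finite. Hence $\pi_n(X,x)$ is a quotient of a finite set and therefore finite.

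There is essentially no obstacle: the only point to be careful about is that the group structure on $\pi_n(X,x)$ (for $n\geq 1$) requires the Kan condition to even make sense, and we are given that $X$ is fibrant precisely so that this combinatorial definition coincides with the usual homotopy groups. Once that is noted, finiteness follows at once from the finiteness of $X_n$ in every degree.
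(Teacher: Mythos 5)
Your proof is correct and follows essentially the same route as the paper: both identify $\pi_n(X,x)$ with a quotient of a subset of the finite set $X_n$ (the paper phrases this via $\Hom_{\Sh}(\Delta^n,X)=X_n$ and maps killing $\partial\Delta^n$, you via the standard combinatorial description for Kan complexes), and conclude finiteness immediately. Your explicit treatment of the $n=0$ case is a harmless addition.
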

\begin{proof}
The $n$th homotopy group $\pi_n(X,x)$ of a fibrant simplicial set $X$ is defined to be the set of homotopy classes of maps $\alpha:\Delta^n \to X$ (relative $\partial \Delta^n$) which fit into diagrams 
$$\xymatrix{
\Delta^n \ar[r]^{\alpha} & X \\
\partial \Delta^n \ar[u] & \ar[l] \Delta^0 \ar[u]_{x}.}
$$
Since $X$ is fibrant and $\Delta^n$ cofibrant, the set of homotopy classes of maps is just the quotient $\Hom_{\Sh}(\Delta^n,X)/\sim$ of maps modulo the simplicial homotopy relation. But since  the set $X_n$ of $n$-simplices of $X$ is finite by our assumption and $\Hom_{\Sh}(\Delta^n,X)=X_n$, there are only finitely many maps and $\pi_n(X,x)$ is a finite group.
\end{proof}

Now let $X$ be an arbitrary profinite space. Then the previous construction yields a decomposition of $X$ as a limit of finite spaces
$$X \stackrel{\simeq}{\longrightarrow} \hat{X}_f:=\lim_{n,R,U} \cosk_n (\bar{W}((\Gamma(X/R)/U).$$
Since the groups $\pi_k \bar{W}(\Gamma(X/R)/U)$ are finite, their higher lim-terms vanish and a spectral sequence argument using sequence (3) and Lemma 2.18 of \cite{profinhom} shows for every $k\geq 0$
$$\pi_kX=\lim_{n,R,U} \pi_k(\cosk_n \bar{W}(\Gamma(X/R)/U))$$
where $\pi_kX$ on  the left denotes the profinite homotopy group of the profinite space $X$.\\
In particular, we have constructed a functor from $\hSh$ to the category of pro-objects of finite spaces defined by
$$X \mapsto \{\cosk_n\bar{W}(\Gamma(X/R)/U)\}_{n,R,U}.$$
By applying this to the set-theoretic completion $\hat{X}$ of a space $X$, we get a functor 
$$F:\Sh \to \hSh \to \mathrm{pro}-\Sh_{\mathrm{fin}}$$ 
where $\Sh_{\mathrm{fin}}$ is the subcategory of $\Sh$ of finite spaces. 
We can consider this functor on the homotopy level 
$$\Hh \to \hHh \to \mathrm{pro}-\Hh_{\mathrm{fin}}.$$ 
It follows immediately from the results on profinite spaces and \cite{artinmazur}, Theorem 4.3, that $F$ is isomorphic to the Artin-Mazur completion functor. Moreover, this implies that the fibrant replacement of $\hat{X}$ in $\hSh$ is a rigid model for the Sullivan completion of $X$, i.e. that $|\hat{X}_f|$ is isomorphic to $\hat{X}^{\mathrm{Su}}$ in $\Hh$. Hence $X\mapsto \hat{X}_f= \lim F(X)$ provides a rigid model for the profinite completion of a space $X$.
%
%
%
\subsection{Completion of spaces versus completion of groups II}\label{sullivan}
We return to the question how completion of spaces and groups are related to each other. We have seen that this a subtle problem. It turns out that after modifying slightly the notion of good groups for higher homotopy groups, one gets a sufficient condition such that the completion of spaces commutes with the one of groups. This result is due to Sullivan. We state it in our terminology to complete the picture for the reader.\\ 
Following \cite{sullivan}, for a pointed space $X$, we call $\pi_1:=\pi_1X$ a {\em good fundamental group}, if it is a good group and has finite cohomology groups, i.e. if the map $H_{\mathrm{cts}}^i(\hat{\pi}_1;M) \to H^i(\pi_1;M)$ is an isomorphism and if these groups are finite for all finite $\pi_1$-modules $M$ and all $i\geq0$.\\
Let $\pi_n:=\pi_nX$, $n \geq 2$, be a higher homotopy group of $X$. It carries a canonical action of $\pi_1$. Let $\Ph$ be the filtered set of finite $\pi_1$-quotients of $\pi_n$. We denote by $\hat{\pi}^{\pi_1}_n:=\lim_{Q\in\Ph}\pi_n/Q$ the $\pi_1$-completion of $\pi_n$. This is, in particular, a profinite group on which $\pi_1$ acts. The $\pi_1$-module $\pi_n$ is called a {\em good higher homotopy group} if
$$H_{\mathrm{cts}}^i(\hat{\pi}^{\pi_1}_n;A) \cong H^i(\pi_n;A)$$ 
and if these groups are finite for all finite coefficient groups $A$ and all $i\geq0$. With these definitions there is the following result of Sullivan \cite{sullivan}, Theorem 3.1. It holds for our rigid model $\hat{X}_f$ of the profinite completion of $X$, since $\hat{X}_f$ is isomorphic to $\hat{X}^{\mathrm{Su}}$ in $\Hh$. 
\begin{theorem}
Let $X$ be a connected pointed space. If $X$ has a good fundamental group and good higher homotopy groups, then the canonical map $\varphi_t: \widehat{\pi_tX} \to \pi_t\hat{X}$ is an isomorphism of profinite groups for every $t$.
\end{theorem}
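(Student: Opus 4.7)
The plan is to proceed by induction up the Postnikov tower of $X$ and use the goodness hypotheses to identify, for each $n$, the profinite completion of the $n$-th Postnikov section $X[n]$ with the corresponding section of $\hat{X}_f$ (which, by the description in \S\ref{secfibrep}, is given by $\cosk_n\bar{W}\hat{\Gamma}\hat{X}$). The case $t=1$ is Proposition \ref{profinitecompletion}, and the goodness of $\pi_1=\pi_1X$ together with Proposition \ref{cor6.6} ensures that the $1$-section $B\pi_1$ of $X$ completes correctly to $\widehat{B\pi_1}\simeq B\hat{\pi}_1$ in $\hSh$, which one identifies with the $1$-section of $\hat{X}_f$.

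For the inductive step, assume $\widehat{X[n-1]}\simeq (\hat{X}_f)[n-1]$ in $\hSh$ and that $\varphi_s$ is an isomorphism for every $s<n$. The Postnikov tower of $X$ presents $X[n]\to X[n-1]$ as a principal fibration with fiber $K(\pi_n,n)$, classified by a twisted cocycle $k_n\in H^{n+1}(X[n-1];\pi_n)$ with $\pi_n=\pi_nX$ viewed as a $\pi_1$-module. The analogous profinite description, as used in the proof of Proposition \ref{sgfib} via diagram (\ref{coskpb}), exhibits $(\hat{X}_f)[n]\to (\hat{X}_f)[n-1]$ as a pullback classified by a continuous cocycle $\hat{k}_n\in H^{n+1}_{\mathrm{cts}}((\hat{X}_f)[n-1];\pi_n\hat{X}_f)$. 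What has to be checked is that the coefficient modules match, that is, $\pi_n\hat{X}_f\cong\widehat{\pi_nX}$, and that $\hat{k}_n$ corresponds to $k_n$ under the induced isomorphism of cohomology groups.

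The main obstacle is precisely this comparison of $k$-invariants and coefficient systems, and the goodness hypotheses are tailored for it. The good higher homotopy group condition provides $H^{i}_{\mathrm{cts}}(\hat{\pi}^{\pi_1}_n;A)\cong H^i(\pi_n;A)$, with both groups finite, for every finite abelian $A$ and every $i\geq 0$. Running the Serre spectral sequence of the fibration $K(\pi_n,n)\to X[n]\to X[n-1]$ with finite local coefficients and comparing with its profinite analogue via Proposition \ref{wepf}, while using the induction hypothesis on the base and the finiteness of all groups involved to kill $\lim^1$ terms, shows simultaneously that $\pi_n\hat{X}_f\cong\hat{\pi}^{\pi_1}_n$, that under the goodness hypotheses this latter group coincides with $\widehat{\pi_nX}$ (every finite quotient is equivariantly dominated by a finite $\pi_1$-quotient once the cohomology of $K(\pi_n,n)$ is finite), and that the classifying cocycles match after completion. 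Assembling the inductive steps and passing to the inverse limit yields a weak equivalence $\hat{X}_f\simeq\holim_n\widehat{X[n]}$ in $\hSh$, and therefore the isomorphism $\varphi_t$ of profinite groups for every $t$.
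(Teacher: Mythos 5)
The paper does not actually prove this statement: it is quoted as Sullivan's Theorem 3.1, and the only thing the paper adds is the observation that the identification of the rigid model with Sullivan's completion, $|\hat{X}_f|\cong \hat{X}^{\mathrm{Su}}$ in $\Hh$ from \S\ref{secrelation}, lets one transfer Sullivan's statement verbatim to $\hat{X}_f$. Your proposal therefore takes a genuinely different and more ambitious route, namely re-deriving Sullivan's theorem inside $\hSh$ by induction on the Postnikov tower. The skeleton you describe (base case via Propositions \ref{profinitecompletion} and \ref{cor6.6}, inductive comparison of principal Postnikov fibrations and their $k$-invariants, then a $\lim^1$-free passage to the limit) is indeed the standard shape of Sullivan's own argument, so the strategy is sound in principle; but judged as a proof, the decisive steps are asserted rather than established.

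Concretely, the heart of the inductive step is the claim that profinite completion carries the fibre sequence $K(\pi_n,n)\to X[n]\to X[n-1]$ to a homotopy fibre sequence of profinite spaces, i.e.\ that $\widehat{K(\pi_n,n)}\to K(\hat{\pi}^{\pi_1}_n,n)$ is a weak equivalence and that the completed total space is weakly equivalent to the pullback of the completed $k$-invariant. This is exactly where both halves of the goodness hypotheses enter: the cohomology isomorphisms \emph{and} the finiteness, which is what allows the Serre spectral sequence of the fibration to be compared with its continuous analogue and to commute with the (co)limits defining continuous cohomology of the completion. Your sentence saying the spectral sequence comparison ``shows simultaneously'' the three desired conclusions conceals precisely this work, which is the actual content of Sullivan's proof. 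Two further points are glossed over: the identification $\hat{\pi}^{\pi_1}_n\cong\widehat{\pi_nX}$ does not follow from your parenthetical remark --- in general the $\pi_1$-completion is only a quotient of the full profinite completion, and one needs an argument (for instance cofinality of characteristic finite-index subgroups when $\pi_n$ is finitely generated, which is the situation in Sullivan's examples); and the matching of $k$-invariants should be extracted from the naturality of the completion map $X\to\hat{X}_f$ applied to the whole Postnikov tower rather than from the spectral sequence itself. The final step $\hat{X}_f\simeq\holim_n\widehat{X[n]}$ is fine once the homotopy groups of the completed sections are known to be profinite at each stage, but that finiteness is an output of the earlier steps, so as written the proposal is a reasonable outline of Sullivan's argument rather than a complete proof; alternatively, you could simply argue as the paper does, by citing Sullivan and invoking the comparison of \S\ref{secrelation}.
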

Sullivan shows that groups which are commensurable with solvable groups in which every subgroup is finitely generated are good fundamental groups and that finitely generated abelian groups are good higher homotopy groups. In particular, we have the following immediate consequence of the previous theorem.
\begin{cor}\label{finitecompletion}
Let $X$ be a space whose homotopy groups are all finite. Then profinite completion induces an isomorphism $\pi_tX=\pi_t\hat{X}$ for every $t\geq 0$.
\end{cor}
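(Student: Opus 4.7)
The plan is to reduce to the preceding theorem of Sullivan by checking that the two goodness hypotheses are automatically satisfied when all homotopy groups of $X$ are finite, and then to observe that profinite completion acts as the identity on a finite group.

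First I would handle the fundamental group. Since $\pi_1:=\pi_1 X$ is finite, its profinite completion $\hat{\pi}_1$ coincides with $\pi_1$ itself (every finite-index subgroup of a finite group is automatically open in the discrete topology, so $\pi_1$ is already profinite). Consequently, the comparison map $H_{\mathrm{cts}}^i(\hat{\pi}_1;M) \to H^i(\pi_1;M)$ is literally the identity for every finite $\pi_1$-module $M$, so $\pi_1$ is good in Serre's sense. The finiteness of $H^i(\pi_1;M)$ for all $i\geq 0$ and all finite $M$ is a standard fact about cohomology of finite groups (for instance via a finite free resolution of $\mathbb{Z}$ over $\mathbb{Z}[\pi_1]$ in each degree, or from Noetherianness of $\mathbb{Z}[\pi_1]$). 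Hence $\pi_1$ is a good fundamental group.

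Next, for $n\geq 2$, I would treat the higher homotopy groups $\pi_n:=\pi_n X$ in essentially the same manner. Since $\pi_n$ is finite, $\pi_n$ itself appears as the terminal object in the filtered system $\Ph$ of finite $\pi_1$-quotients of $\pi_n$, so the $\pi_1$-completion $\hat{\pi}_n^{\pi_1}=\lim_{Q\in \Ph}\pi_n/Q$ is canonically isomorphic to $\pi_n$. Therefore the comparison map $H_{\mathrm{cts}}^i(\hat{\pi}_n^{\pi_1};A)\to H^i(\pi_n;A)$ is again an identity, and the groups in question are finite for all finite coefficient modules $A$ by the same cohomological finiteness result for finite abelian groups. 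Thus every $\pi_n$ is a good higher homotopy group in Sullivan's sense.

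With both hypotheses verified, I would invoke the preceding theorem to conclude that the canonical map $\varphi_t:\widehat{\pi_t X}\to \pi_t\hat{X}$ is an isomorphism of profinite groups for every $t\geq 0$. Finally, since $\pi_t X$ is finite, its profinite completion $\widehat{\pi_t X}$ equals $\pi_t X$ itself; composing with $\varphi_t$ yields the desired isomorphism $\pi_t X \cong \pi_t\hat{X}$. The only substantive point along the way is the finiteness of group cohomology with finite coefficients, which is standard, so no real obstacle arises.
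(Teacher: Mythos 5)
Your proof is correct and follows essentially the same route as the paper: both deduce the corollary from the preceding theorem of Sullivan, the only (minor) difference being that you verify the goodness hypotheses directly from the definitions (finite groups are their own profinite completions, and their cohomology with finite coefficients is finite), whereas the paper quotes Sullivan's general classes of good fundamental and higher homotopy groups, of which finite groups are a special case.
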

\subsection{Completion of nilpotent spaces}
There is another condition for the homotopy groups of $X$ that allows to get our hands on the relation between $\pi_nX$ and $\pi_n\hat{X}$. Therefore let $X$ be a nilpotent space. This means that $\pi_1X$ is a nilpotent group and the action of $\pi_1X$ on the abelian groups $\pi_nX$ for $n\geq 2$ is also nilpotent, i.e. $\pi_nX$ has a finite filtration such that $\pi_1X$ acts trivially on each quotient of the filtration.\\
So let $X$ be a connected nilpotent space and let $p$ be a prime number. Paul Goerss has shown in \cite{goersscomp}, Proposition 5.9, that the homotopy groups of the pro-$p$-completion $\hat{X}_p$ of $X$, i.e. the fibrant replacement of $\hat{X}$ in the $\Z/p$-model structure on $\hSh$ of Morel \cite{ensprofin}, fit in a splittable short exact sequence
$$0 \to \widehat{\pi_nX}^p \to \pi_n\hat{X}_p \to \hat{L}^p_1(\pi_{n-1}X)\to 0$$
for every $n\geq 1$. Here $\hat{L}^p_1$ denotes the first left derived functor of the pro-$p$-group completion functor, which can be defined for non-abelian groups as in \cite{goersscomp}, Definition 5.6, via Eilenberg-MacLane spaces.\\
This result has independently been proven for the full profinite completion in terms of pro-spaces by Rector in \cite{rector}, Theorem 5.11, for a slightly restricted class of connected nilpotent spaces. In loc.\,cit., Rector shows that there is a natural short exact sequence for every $n\geq 1$
$$0 \to \widehat{\pi_nX} \to \pi_n\hat{X}_f \to \hat{L}_1(\pi_{n-1}X)\to 0$$
where $\hat{L}_1$ denotes the first left derived functor of the pro-p-group completion functor.\\
Another nice property of a connected nilpotent space $X$ is that its profinite homotopy type is determined by its pro-$p$-types. For any space $X$ there is a canonical natural map 
$$\hat{X}_f \to \prod_p \hat{X}_p.$$
This map is an equivalence if $X$ is nilpotent and of finite type by \cite{sullivan}, p. 53. 
%
%
\section{Profinite $G$-spaces}
We turn our attention to the equivariant setting. Let $G$ be a fixed profinite group and let $S$ be a profinite set on which $G$ acts continuously, i.e.\,there is a continuous map $\mu:G \times S \to S$ satisfying $\mu(e,s)=s$ and $\mu(gh,s)=\mu(g,\mu(h,s))$ for all $s\in S$, $g,h \in G$ and $e\in G$ being the neutral element. In this situation we say that $S$ is a profinite $G$-set. If $X$ is a profinite space and $G$ acts continuously on each $X_n$ such that the action is compatible with the structure maps, then we call $X$ a {\em profinite $G$-space}. We denote by $\hShg$ the category of profinite $G$-spaces with $G$-equivariant maps of profinite spaces as morphisms.\\ 
While a discrete $G$-space $Y$ is characterized as the colimit over the fixed point spaces $Y^U$ over all open subgroups, a profinite $G$-space X is the limit over its orbit spaces $X/U$. More explicitly, for an open and hence closed normal subgroup $U$ of $G$, let $X/U$ be the quotient space under the action by $U$, i.e.\,the quotient $X/\sim$ with $x \sim y$ in $X$ if both are in the same orbit under $U$. 
\begin{lemma}\label{actionlemma}
Let $G$ be a profinite group and $X$ a profinite space with a $G$-action. Then $X$ is a profinite $G$-space if and only if the canonical map $\phi:X \to \lim_U X/U$ is an isomorphism, where $U$ runs through the open normal subgroups of $G$. 
\end{lemma}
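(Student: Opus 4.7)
The plan is to reduce to a degreewise statement about profinite sets: fix $n \geq 0$ and let $Y := X_n$, so that I must show $G$ acts continuously on $Y$ if and only if $\phi_n : Y \to \lim_U Y/U$ is an isomorphism in $\hEh$. Both directions require identifying $Y/U$ as a genuine profinite set, so as a preliminary step I would observe that the orbit relation $\{(y, uy) : y \in Y,\, u \in U\}$ is the image of the compact space $U \times Y$ under a continuous map into the Hausdorff space $Y \times Y$, hence closed; so $Y/U$ is the quotient of a profinite set by a closed equivalence relation, and is itself profinite.

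For the forward direction (continuous action implies $\phi_n$ is an iso), I would prove injectivity via continuity at the identity of $G$: if $\phi_n(y) = \phi_n(y')$ then for each open normal $U \subseteq G$ there exists $g_U \in U$ with $g_U y = y'$. Since the open normal subgroups form a neighborhood basis of $e$, the net $(g_U)$ converges to $e$, so by continuity $g_U y \to y$, forcing $y = y'$. For surjectivity I would lift a compatible thread $(\bar{y}_U) \in \lim_U Y/U$ by the finite intersection property: the preimages $\pi_U^{-1}(\bar{y}_U)$ are nonempty (as $\pi_U$ is surjective) closed subsets of the compact space $Y$, directed under inclusion, so their intersection is nonempty and any element of it is a lift. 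The resulting continuous bijection $\phi_n$ from the compact space $Y$ to the Hausdorff space $\lim_U Y/U$ is automatically a homeomorphism, hence an isomorphism in $\hEh$.

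For the reverse direction, suppose $\phi_n$ is an isomorphism of profinite sets. Each $Y/U$ carries a $G$-action factoring through the finite quotient $G/U$, so the action map decomposes as $G \times Y/U \twoheadrightarrow G/U \times Y/U \to Y/U$ and is continuous. Since $Y \cong \lim_U Y/U$ carries the limit topology, a map into $Y$ is continuous if and only if its composition with every projection $Y \to Y/U$ is continuous; applying this to $G \times Y \to Y$ and noting that this composition agrees with the already-continuous action on $Y/U$, I conclude that the $G$-action on $Y$ is continuous. Compatibility across $n$ with the simplicial structure maps is automatic because $\phi$ is simplicial.

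The step I expect to be the main obstacle is showing that each $Y/U$ is a profinite set: this is where the continuity hypothesis in the forward direction is used in an essential, non-formal way, converting compactness of $U$ into closedness of the orbit relation, which in turn is what lets both the Hausdorffness of $Y/U$ and the finite-intersection surjectivity argument go through. Once that is in hand, both halves are standard profinite limit arguments.
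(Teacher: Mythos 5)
Your forward direction follows the paper's proof in all essentials: reduce to a fixed degree, use joint continuity and compactness of $U$ to control the quotients $X_n/U$, get injectivity from the fact that the open normal subgroups form a neighborhood basis of $e$ (your net argument is the same fact the paper extracts by finding $U\subseteq \mu_x^{-1}(V)$), get surjectivity from a compactness argument (your finite-intersection lifting of threads versus the paper's dense-and-closed-image argument), and conclude with the compact-to-Hausdorff continuous-bijection criterion. You also spell out the converse, which the paper's written proof does not treat explicitly; your argument (the induced action on each $X_n/U$ factors through the finite discrete group $G/U$, and continuity of a map into $\lim_U X_n/U$ is tested on the projections) is correct, granted the natural reading that each $g\in G$ already acts by an automorphism of the profinite space $X$, which is what makes the induced maps on the quotients continuous.

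The one step that cannot stand as written is the preliminary claim that $X_n/U$ is profinite \emph{because} it is the quotient of a profinite set by a closed equivalence relation. Closedness of the relation in a compact Hausdorff space does give a compact Hausdorff quotient, but it does not give total disconnectedness: the Cantor set maps onto $[0,1]$ by a closed equivalence relation. So the general principle you invoke, and which you single out as the crux, is false. It is repairable in two ways. Either observe that your argument only needs $X_n/U$ to be Hausdorff: then $\lim_U X_n/U$ is compact Hausdorff, $\phi_n$ is a continuous bijection onto it and hence a homeomorphism, and the limit is totally disconnected a posteriori because it is homeomorphic to $X_n$. Or, if you want each quotient to be genuinely profinite (as the paper asserts, citing Bourbaki), use the group action and not just closedness: for a clopen $W\subseteq X_n$ the invariant core $\{x : Ux\subseteq W\}$ is closed, $U$-invariant, and open by the tube lemma applied to the continuous action of the compact group $U$; since disjoint orbits can be separated by a clopen $W$, these invariant clopen sets separate points of $X_n/U$, which gives both Hausdorffness and total disconnectedness of the quotient. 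With that correction the proposal is sound and matches the paper's route, while adding the converse direction the paper leaves implicit.
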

\begin{proof}
It suffices to prove this for each $X_n$, so let $X$ be a profinite $G$-set. The equivalence relation on $X$ induced by the action of $U$ is an open and closed relation, see e.\,g.\,\cite{bourb} Chapter I-III for the topological results we use. Hence the quotients $X/U$ are again Hausdorff spaces. Since the cofiltered limit of compact Hausdorff spaces is so again, we deduce that the limit $\lim_U X/U$ is a totally disconnected compact Hausdorff space. Now each map $X \to X/U$ is surjective and hence the image of $\phi:X\to \lim_U X/U$ is dense. Since $X$ is compact and $\lim_U X/U$ is a compact Hausdorff space, $\phi(X)$ is already closed and $\phi$ is an open and surjective map. For the injectivity, let $x\neq y$ be two distinct points in $X$. Since $X$ is Hausdorff, there is an open subset $V$ of $X$ that contains $x$ but does not contain $y$. The preimage $\mu_x^{-1}(V)$ of $V$ under the continuous map $\mu_x: G \to X,~g \mapsto \mu(g,x)$ is an open subset of $X$. Now $G$ being a profinite group, the open normal subgroups of $G$ form a basis of the topology on $G$. Hence $\mu_x^{-1}(V)$ contains at least one open normal subgroup $U$. Then $y$ is not in the orbit $Ux$ of $x$ under $U$. Hence $\phi(x)\neq \phi(y)$ in $\lim_U X/U$. This shows that $\phi$ is a continuous bijection between compact Hausdorff spaces and hence $\phi$ is a homeomorphism. 
\end{proof}

Moreover, every profinite $G$-set is in fact a limit of finite $G$-quotients by \cite{ribes}, Lemma 5.6.4. This yields an analogue decomposition of a profinite $G$-space.
\begin{lemma}\label{5.6.4}
Let $G$ be a profinite group and $X$ a profinite $G$-space. There is $G$-invariant decomposition of $X$ as an inverse limit of simplicial finite $G$-sets 
$$X =\lim_i X_i~\mathrm{in}~\hShg.$$
\end{lemma}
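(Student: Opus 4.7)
The plan is to bootstrap the decomposition from two ingredients already at hand: Lemma \ref{actionlemma}, which writes $X$ as $\lim_U X/U$ over the open normal subgroups $U$ of $G$, and the non-equivariant decomposition $Y \cong \lim_R Y/R$ of an arbitrary profinite space as a cofiltered limit of simplicial finite sets via its simplicial open equivalence relations. The $G$-equivariance will be produced by averaging a given $R$ over the \emph{finite} group $G/U$.

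First, I would fix an open normal subgroup $U \trianglelefteq G$ and consider $Y := X/U$, on which $G$ acts through the finite quotient $G/U$. Writing $Y = \lim_R Y/R$ with $R$ running through $\Rh(Y)$, the key step is to pass from $R$ to a $G/U$-invariant refinement
\[
R' := \bigcap_{g \in G/U} gR \subseteq Y \times Y.
\]
Since $G/U$ is finite, $R'$ is a finite intersection of simplicial open equivalence relations on $Y$, hence is itself a simplicial open equivalence relation, and it is by construction $G/U$-stable. The quotient $Y/R'$ is therefore a simplicial finite set carrying an induced action of $G/U$, i.e. a simplicial finite $G$-set on which $U$ acts trivially. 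The collection of such $G/U$-invariant $R'$ is cofinal in $\Rh(Y)$, so
\[
X/U \;\cong\; \lim_{R' \text{ $G/U$-invariant}} (X/U)/R'
\]
as profinite $G$-spaces.

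Combining this with Lemma \ref{actionlemma}, I would index the total system by pairs $i = (U, R')$ where $U$ is an open normal subgroup of $G$ and $R'$ is a $G/U$-invariant simplicial open equivalence relation on $X/U$; define $X_i := (X/U)/R'$, which is a simplicial finite $G$-set. The indexing category is cofiltered (one refines $U$ to a smaller open normal subgroup and pulls back and further refines $R'$), and the maps $X \to X_i$ are $G$-equivariant by construction. Iterating the two isomorphisms gives
\[
X \;\cong\; \lim_U X/U \;\cong\; \lim_i X_i \quad \text{in } \hShg.
\]

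The only point that requires a little care, and which I would highlight as the main technical obstacle, is checking that $R'$ really is open in $(X/U)_n \times (X/U)_n$ in each simplicial degree: this uses that the $G/U$-action on each level of $X/U$ is by homeomorphisms (so that each $gR$ is open), together with the finiteness of $G/U$ to conclude that the intersection remains open. Once openness of $R'$ is in place, everything else is formal from Lemma \ref{actionlemma} and the standard decomposition of a profinite space recalled in the earlier discussion of $\Rh(X)$.
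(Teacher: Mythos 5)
Your proposal is correct, but it reaches the decomposition by a somewhat different route than the paper. The paper reduces at once to profinite $G$-sets and invokes Lemma 5.6.4 of \cite{ribes} directly: for an arbitrary open equivalence relation $R$ on $X$ one forms $S=\bigcap_{g\in G}gR$, an intersection over the whole (typically infinite) group, and the real content is a compactness argument using the profinite topologies on $G$ and $X$ to show that this infinite intersection is still open; the $G$-invariant relations obtained this way are cofinal, so one gets a single-indexed limit over $\Rh_G(X)$ without ever passing through the quotients $X/U$. You instead feed Lemma \ref{actionlemma} in first, so that the averaging takes place over the finite groups $G/U$, where openness of $R'=\bigcap_{g\in G/U}gR$ is immediate; the price is a doubly indexed cofiltered system and, more importantly, the need to know that each $X/U$ is itself a simplicial \emph{profinite} set, since otherwise the non-equivariant decomposition $X/U\cong\lim_{R}(X/U)/R$ you quote is simply not available (a merely compact Hausdorff quotient could have no open equivalence relations with finite quotient at all). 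That fact -- each $(X/U)_n$ is compact, Hausdorff and totally disconnected -- is where the compactness argument you avoided reappears in a mild form: one separates two distinct $U$-orbits by a $U$-saturated clopen set, using compactness of $U$ and of the orbits. This is consistent with how the paper treats $X/U$ in Lemma \ref{actionlemma} and Definition \ref{defGcomp}, but it deserves an explicit sentence in your write-up, since you single out openness of $R'$ as the only delicate point when in fact it is the easy one on your route. Granting that, the rest is sound: $R'$ is a simplicial open $G/U$-invariant equivalence relation with finite quotient (either because its classes are open and $X/U$ is compact, or because $(X/U)/R'$ embeds in the finite product $\prod_{g\in G/U}(X/U)/gR$), such $R'$ are cofinal in $\Rh(X/U)$, and the passage to the limit over pairs $(U,R')$ gives the asserted isomorphism in $\hShg$.
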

\begin{proof}
Again, it suffices to prove the assertion for a profinite $G$-set $X$. This is done in Lemma 5.6.4 of \cite{ribes}. The idea is to show that for any open equivalence relation $R$ on $X$, considered as an open subset of $X \times X$, there is a $G$-invariant open equivalence relation $S\subseteq R$. One defines $S$ to be the intersection of all open subsets $gR$ in $X$, i.e.
$$S=\bigcap_{g\in G} gR.$$
Using that $G$ and $X$ carry a profinite topology, one can show that $S$ is in fact an open subset of $X\times X$. This implies that each quotient $X/S$ is a finite $G$-set. Finally, one shows that $X$ is equal to the limit $\lim_S X/S$ as in the proof of the previous lemma.
\end{proof}

In order to get a model structure on $\hShg$ one can find explicit sets of generating fibrations and trivial fibrations. They arise naturally by considering $G$-actions on the corresponding generating sets for the model structure on $\hSh$. The following result has been proven in \cite{gspaces}, Theorem 2.9.
\begin{theorem}\label{Gmodelunstable}
There is a fibrantly generated left proper simplicial model structure on the category of profinite $G$-spaces such that a map $f$ is a weak equivalence (respectively fibration) in $\hShg$ if and only if its underlying map is a weak equivalence (respectively fibration) in $\hSh$. A map $f:X\to Y$ is a cofibration in $\hShg$ if and only if $f$ is a level-wise injection and the action of $G$ on $Y_n -f(X_n)$ is free for each $n \geq 0$.
We denote its homotopy category by $\hHhg$. 
\end{theorem}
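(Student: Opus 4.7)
The plan is to transfer the fibrantly generated structure of Theorem \ref{modelstructure} to $\hShg$ along the forgetful functor $\hShg \to \hSh$ by specifying explicit $G$-equivariant analogues of the generating sets. Let $P$ and $Q$ denote the generating fibrations and trivial fibrations for $\hSh$ described in \cite{gspaces}, which are assembled from principal-bundle-type maps of the form $E\pi \times_\pi L(M,n) \to E\pi \times_\pi K(M,n)$ together with $B\pi \to \ast$ for finite groups $\pi$ and finite $\pi$-modules $M$. I would define $P_G$ and $Q_G$ to consist of such maps in which $(\pi,M)$ additionally carries a continuous $G$-action; since $\pi$ and $M$ are finite and $G$ is profinite, each such action factors through some open normal subgroup $U$, so that $P_G$ and $Q_G$ are genuine sets indexed by tuples $(U,\pi,M)$.

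The weak equivalences and fibrations of $\hShg$ are then declared via the forgetful functor, so that the $2$-out-of-$3$ property, retract closure, and the characterization modulo $\hSh$ are immediate consequences of Theorem \ref{modelstructure}. Factorizations are produced by Morel's cosmall object argument (\cite{ensprofin}, Lemme 2) applied to $P_G$ and $Q_G$: this is legitimate because Lemmas \ref{actionlemma} and \ref{5.6.4} exhibit every profinite $G$-space as a cofiltered limit of simplicial finite $G$-sets in $\hShg$, while the codomains of the generating maps are themselves simplicial finite $G$-sets and therefore cosmall. Matching the resulting factorizations with the classes of (trivial) fibrations is routine: a $G$-equivariant lifting problem against a map in $P_G$ (respectively $Q_G$) is the same as an underlying lifting problem in $\hSh$ with the additional constraint that the solution be $G$-equivariant, which is satisfied exactly when the underlying map is a fibration (respectively trivial fibration) in $\hSh$.

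The subtle point is the cofibration characterization. If $f\colon X \to Y$ is a level-wise injection with $G$ acting freely on $Y_n \setminus f(X_n)$, one constructs lifts against any map in $Q_G$ by choosing orbit representatives in the complement, lifting them via the underlying cofibration property provided by Theorem \ref{modelstructure}, and then extending $G$-equivariantly by freeness of the $G$-action on the complement. Conversely, a map with the left lifting property against $Q_G$ is necessarily a level-wise monomorphism (tested against the underlying generators) and must have $G$-free complement, which is forced by testing against those elements of $Q_G$ whose targets $E\pi \times_\pi K(M,n)$ detect the $G$-orbit structure on $Y_n$ through their $G$-action. I expect this matching to be the main technical obstacle, because it requires the choice of $P_G$ and $Q_G$ to be rich enough to simultaneously see the underlying cofibration property and the orbit structure.

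Left properness is then inherited directly from Theorem \ref{modelstructure}: pushouts along cofibrations in $\hShg$ are computed by the forgetful functor (cofibrations being in particular monomorphisms), and weak equivalences are detected on underlying profinite spaces, so left properness of $\hSh$ transfers verbatim. The simplicial enrichment is defined by $\Map(X,Y)_n := \Hom_{\hShg}(X \times \Delta^n, Y)$ with trivial $G$-action on $\Delta^n$, and axiom SM7 reduces to its non-equivariant counterpart because the pushout-product of a map $f\colon X \to Y$ in $\hShg$ with a cofibration $\partial\Delta^n \hookrightarrow \Delta^n$ preserves both the level-wise injectivity and the freeness of the $G$-action on the relevant complements.
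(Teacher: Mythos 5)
You should first note that this paper contains no proof of Theorem \ref{Gmodelunstable} at all: it is quoted from \cite{gspaces}, Theorem 2.9. Your outline does share the broad strategy of that reference (weak equivalences and fibrations declared through the forgetful functor, factorizations by a cosmall object argument applied to equivariant generating sets, left properness and SM7 inherited from $\hSh$), but the two steps that carry all the content are asserted rather than proved, and as stated they do not hold. The sentence claiming that a $G$-equivariant lifting problem against a map of $P_G$ (resp.\ $Q_G$) ``is satisfied exactly when the underlying map is a fibration (resp.\ trivial fibration)'' is precisely the point at issue and is false as literally written: an underlying (trivial) fibration only provides non-equivariant lifts, and upgrading them to $G$-equivariant ones is exactly where freeness of the action on the complement must enter. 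Your sketch of that upgrade --- ``choose orbit representatives in the complement and extend equivariantly'' --- ignores that $Y_n\setminus f(X_n)$ is a profinite set with a continuous $G$-action: choosing representatives means choosing a continuous section of the orbit map, which for free continuous actions of profinite groups is a genuine theorem (cf.\ \cite{ribes}) and not a set-theoretic triviality; alternatively one has to reduce to simplicial finite $G$-sets as in Lemma \ref{5.6.4} and verify that this decomposition is compatible with $f$ and with the freeness hypothesis on the complement. Neither route appears in your argument.

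The second, more serious gap is the one you yourself flag as ``the main technical obstacle'': that having the left lifting property against your $Q_G$ forces the $G$-action on $Y_n\setminus f(X_n)$ to be free. This is never addressed, and your choice of $Q_G$ is unlikely to make it true. The $G$-actions on $E\pi\times_\pi L(M,n)\to E\pi\times_\pi K(M,n)$ that are induced functorially from continuous $G$-actions on the pair $(\pi,M)$ always preserve the canonical basepoints, so these test objects are poorly suited to detecting non-free simplices. The objects that actually do the detecting are of free (translation) and cofree type relative to the finite quotients $G/U$: for instance $E(G/U)\to\ast$, with $G$ acting through $G/U$ by translation, is an underlying trivial fibration against which the map $\emptyset\to\ast$ (trivial action, $G\neq 1$) has no equivariant lift --- correctly so, since its complement is not free --- even though its underlying map is a cofibration in $\hSh$; maps of this translation type do not arise from actions on $(\pi,M)$ and are absent from your set. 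Until LLP$(Q_G)$ is identified with the class of level-wise injections with free complement (and LLP$(P_G)$ with the trivial cofibrations), neither the factorization axioms nor the retract argument identifying the cofibrations goes through; this identification, carried out with generating sets adapted to the quotients $G/U$ and with the finiteness/continuity reductions above, is the substance of the proof in \cite{gspaces}. (The remaining points --- cosmallness of the level-wise finite codomains in the equivariant category, left properness, and the SM7 axiom --- are plausible but should also be checked equivariantly; they are minor compared with the issues above.)
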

%
%
\subsection{Equivariant completion}
Let $G$ be a profinite group and $X$ a simplicial $G$-set, i.e. a simplicial object in the category of $G$-sets (without any topological condition). We are going to construct a functorial completion such that the output is a profinite space with a continuous $G$-action, i.e. an object of $\hShg$.\\ 
Therefore, we start with a finite group $F$ and a simplicial $F$-set $Y$. Let $\Rh_F(Y)$ be the set of $F$-equivariant equivalence relations on $Y$ with finite quotients, i.e. each $R\in \Rh_{F}$ is an $F$-invariant simplicial subset of $Y\times Y$ such that $(Y_n\times Y_n)/R_n$ is finite. The limit $\hat{X}_F:=\lim_{R\in \Rh_{F}} Y/R$ is an $F$-invariant profinite completion.\\
Now let $G$ be again our profinite group and $X$ a simplicial $G$-set. If we defined $\hat{X}_G$ in the same way as for $F$, the induced $G$-action on $\hat{X}_G$ would in general not be continuous. Instead, we first consider the quotient $X/U$ by an open normal subgroup $U$ of $G$. Via the just defined $G/U$-equivariant completion we obtain a profinite $G/U$-space $\widehat{(X/U)}_{G/U}$. The limit over all open normal $U$ is a profinite space with a continuous $G$-action. 
\begin{defn}\label{defGcomp}
We define the profinite $G$-completion of $X$ to be 
$$\hat{X}_G:=\lim_U \widehat{(X/U)}_{G/U}$$
where $U$ runs through the open normal subgroups of $G$.
\end{defn}
It is equipped with a $G$-equivariant map $\varphi:X\to \hat{X}_G$ and has the following expected universal property.
\begin{lemma}
Let $Z$ be a profinite $G$-space and let $f:X\to Z$ be a map of simplicial $G$-sets. Then there is a unique map $\hat{f}$ such that $f$ factors as $f= \hat{f}\circ \varphi$.
\end{lemma}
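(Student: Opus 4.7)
The plan is to reduce the universal property to the finite-quotient situation by combining the decompositions of Lemma \ref{actionlemma} and Lemma \ref{5.6.4}. First, since $Z$ is a profinite $G$-space, Lemma \ref{actionlemma} gives an isomorphism $Z \cong \lim_U Z/U$ in $\hShg$, where $U$ runs through the open normal subgroups of $G$. Because $f$ is $G$-equivariant, it descends, for each such $U$, to a $G/U$-equivariant map $f_U : X/U \to Z/U$ of simplicial $G/U$-sets, whose target is a profinite $G/U$-space (with $G/U$ finite).

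The main step is to show that for each open normal $U$ there is a unique $G/U$-equivariant map
$$\hat{f}_U : \widehat{(X/U)}_{G/U} \longrightarrow Z/U$$
factoring $f_U$ through the canonical completion map $X/U \to \widehat{(X/U)}_{G/U}$. To establish this, I would apply Lemma \ref{5.6.4} to the profinite $G/U$-space $Z/U$, writing $Z/U = \lim_j W_j$ as a cofiltered limit of simplicial finite $G/U$-sets. For every index $j$, the composition $X/U \to Z/U \to W_j$ is a $G/U$-equivariant map from a simplicial $G/U$-set to a simplicial finite $G/U$-set; its kernel pair is therefore a $G/U$-invariant simplicial equivalence relation on $X/U$ with finite quotient, hence an element of $\Rh_{G/U}(X/U)$. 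By the very definition of $\widehat{(X/U)}_{G/U} = \lim_{R \in \Rh_{G/U}} (X/U)/R$, this composition factors uniquely through $\widehat{(X/U)}_{G/U}$. Taking the limit over $j$ produces the required $\hat{f}_U$, and uniqueness at each $j$ gives uniqueness of $\hat{f}_U$.

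Finally, the maps $\hat{f}_U$ are compatible as $U$ varies: for $U' \subseteq U$, the induced map $\widehat{(X/U')}_{G/U'} \to \widehat{(X/U)}_{G/U}$ together with $\hat{f}_{U'}$ and $\hat{f}_U$ fit in a commuting square, by the uniqueness part of the factorization applied to the composite $X/U' \to Z/U' \to Z/U$. Taking the limit over $U$ then yields
$$\hat{f} : \hat{X}_G = \lim_U \widehat{(X/U)}_{G/U} \longrightarrow \lim_U Z/U \cong Z,$$
and the identity $f = \hat{f} \circ \varphi$ holds by construction at every level $U$. Uniqueness of $\hat{f}$ follows from the uniqueness of each $\hat{f}_U$ combined with the isomorphism $Z \cong \lim_U Z/U$.

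The only point that requires genuine care is the finite-group factorization step: verifying that $G/U$-equivariant maps from a simplicial $G/U$-set into a profinite $G/U$-space factor uniquely through the $G/U$-equivariant completion. This is the equivariant analogue of the basic universal property of profinite completion of sets and relies essentially on the fact, guaranteed by Lemma \ref{5.6.4}, that profinite $F$-spaces for $F$ finite are cofiltered limits of simplicial finite $F$-sets. The rest is a routine passage to the limit in $U$.
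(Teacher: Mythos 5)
Your proposal is correct and takes essentially the same route as the paper: decompose $Z$ as $\lim_U Z/U$ via Lemma \ref{actionlemma}, reduce to the finite quotient group $G/U$, and invoke the defining limit of the equivariant completion. You simply make explicit, via Lemma \ref{5.6.4} and the kernel-pair argument, the finite-group factorization step that the paper's proof treats as obvious.
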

\begin{proof}
Since $Z$ is a profinite $G$-space it is isomorphic to $\lim_U Z/U$ by Lemma \ref{actionlemma}. Hence, by definition of $\hat{X}_G$, we can assume that $G$ is finite. Then $\hat{X}_G$ is equal to $\lim_{R\in \Rh_{G}} X/R$ and it becomes obvious that $f$ factors uniquely through $\varphi$.
\end{proof}
\begin{remark}
Boggi \cite{boggi}, \cite{boggi2} and Lochak \cite{lochak} study also a $G$-completion functor for $G$-spaces. But in loc.\,cit.\,one starts with a discrete group $G$ acting on a simplicial set $X$ satisfying certain additional conditions. One obtains a profinite $\hat{G}$-space in our sense where $\hat{G}$ is the profinite group completion of $G$ (or a quotient of the full profinite completion). Here we start with a profinite group. The two approaches converge to a common result if $G$ is a strongly complete profinite group. We will discuss this property below.
\end{remark}
\subsection{Discrete $G$-spaces}
Let $G$ be a profinite group. Defining $G$-equivariant completion is more convenient when we require a natural assumption for the action of $G$ on the spaces. So we restrict our attention to discrete $G$-spaces in the sense of \cite{goerss}. A simplicial $G$-set is called a discrete $G$-space when the action of $G$ on each set $X_n$ equipped with the discrete topology is continuous and compatible with the face and degeneracy maps. Let $\Sh_{dG}$ be the category of such discrete $G$-spaces with $G$-equivariant maps as morphisms. 
We can simplify the profinite completion functor, when we restrict it to $\Sh_{dG}$. We denote again by $\Rh_{G}(X)$ the set of $G$-invariant simplicial equivalence relations on $X$ such that the quotient $X/R$ is a simplicial finite set. Then we get the following simplification.
\begin{prop}\label{compdiscrete}
Let $X$ be a discrete $G$-space. Then the limit 
$$\hat{X}'_G:=\lim_{R\in \Rh_{G}(X)} X/R$$ 
is a profinite $G$-space and is isomorphic to the $G$-completion $\hat{X}_G$ of $X$ of Definition \ref{defGcomp}.
\end{prop}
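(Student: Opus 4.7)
The plan is to identify $\hat{X}'_G$ as a profinite $G$-space that satisfies the same universal property as $\hat{X}_G$, and then invoke uniqueness of representing objects. First I would show that for each $R \in \Rh_G(X)$, the simplicial finite set $X/R$ is a profinite $G$-space in the sense of the paper. Since $X$ is a discrete $G$-space, the stabilizer in $G$ of every element $x \in X_n$ is open. For each degree $n$, the set $(X/R)_n$ is finite, so the $G$-action on $(X/R)_n$ factors through the finite quotient $G/K_n$ where $K_n$ is the intersection of the stabilizers of the finitely many orbit representatives; this $K_n$ is an open (normal) subgroup of $G$, so the action on $(X/R)_n$ is continuous. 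Hence $X/R$ is a profinite $G$-space.

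Next, cofiltered limits of profinite $G$-spaces are again profinite $G$-spaces (by Lemma \ref{actionlemma}, since continuity is preserved under limits, and the limit of compact Hausdorff totally disconnected sets is of the same type). So $\hat{X}'_G = \lim_{R \in \Rh_G(X)} X/R$ is a profinite $G$-space, and the canonical quotient maps assemble into a $G$-equivariant map $\varphi': X \to \hat{X}'_G$.

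Now I would verify that $(\hat{X}'_G,\varphi')$ satisfies the universal property that the lemma preceding the proposition establishes for $(\hat{X}_G, \varphi)$. Given a profinite $G$-space $Z$ and a $G$-equivariant map $f: X \to Z$, use Lemma \ref{5.6.4} to write $Z = \lim_i Z_i$ as a cofiltered limit of simplicial finite $G$-sets. For each $i$, the composition $f_i: X \to Z_i$ is $G$-equivariant, and the equivalence relation $R_i$ on $X$ defined by $x \sim_i y \iff f_i(x) = f_i(y)$ belongs to $\Rh_G(X)$: it is $G$-invariant because $f_i$ is, and $X/R_i$ embeds into the simplicial finite set $Z_i$ and hence is simplicially finite. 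So $f_i$ factors uniquely through $X/R_i$, and compatibility in $i$ produces a $G$-equivariant map $\hat{f}: \hat{X}'_G \to Z$ with $\hat{f} \circ \varphi' = f$. Uniqueness of $\hat{f}$ follows because the image of $\varphi'$ projects surjectively onto every $X/R$ in the defining system, hence is dense in $\hat{X}'_G$, and any two continuous maps to the Hausdorff profinite space $Z$ that agree on a dense subset must coincide. By the uniqueness of representing objects, $\hat{X}'_G \cong \hat{X}_G$ as profinite $G$-spaces compatibly with the maps from $X$.

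The main obstacle is the continuity of the $G$-action on each $X/R$; this is exactly where the discreteness hypothesis is essential, since it is what forces the stabilizers of elements of $X_n$ to be open and thus lets us avoid the intermediate step of first passing to quotients $X/U$ by open normal subgroups, as is needed in the general Definition \ref{defGcomp}. Once this continuity is in hand, the rest of the proof is formal: it is a matter of matching universal properties via Lemma \ref{5.6.4}.
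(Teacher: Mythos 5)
Your argument is correct and follows essentially the same route as the paper: use discreteness of the $G$-action on $X$ to see that the induced action on each finite quotient $X/R$, $R\in\Rh_G(X)$, is continuous, conclude that the limit is a profinite $G$-space, and then match the universal property with that of $\hat{X}_G$ (a step the paper declares immediate and which you spell out via Lemma \ref{5.6.4} and a density argument). One small imprecision: the subgroup through which the $G$-action on $(X/R)_n$ factors should be the kernel of the action, i.e.\ the intersection of the stabilizers of \emph{all} (finitely many) elements of $(X/R)_n$ --- each such stabilizer is open since it contains the open stabilizer of a preimage in $X_n$ --- rather than only of orbit representatives; this kernel is an open normal subgroup, so your conclusion stands unchanged.
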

\begin{proof}
Since the $G$-action is compatible with the simplicial structure, it suffices again to prove the corresponding assertion for discrete and profinite $G$-sets. So let $X$ be a discrete $G$-set. 
Every $R\in \Rh_{G}$ is an open equivalence relation. Hence the continuous action of $G$ on $X$ induces a continuous action of $G$ on the finite discrete sets $X/R$. Thus $(\hat{X}'_G)$ is the limit of continuous $G$-sets and hence it is itself a continuous $G$-set. So $\hat{X}'_G$ is a profinite $G$-space. 
Moreover, it follows immediately from the construction that $\hat{X}'_G$ satisfies the same universal property for $G$-equivariant maps from the discrete $G$-space $X$ to profinite $G$-spaces as $\hat{X}_G$. Thus there is a unique isomorphism $\hat{X}'_G \cong \hat{X}_G$ in $\hShg$. 
\end{proof}
\begin{remark}
There is a model structure on $\Sh_{dG}$ constructed by P. Goerss in \cite{goerss}, Theorem 1.12, in which a map is a weak equivalence (resp.\,cofibration) in $\Sh_{dG}$ if and only if it is a weak equivalence (resp.\,cofibration) in $\Sh$. In particular, every discrete $G$-space is cofibrant.
The relationship between $G$-equivariant completion functor and the functor that forgets the profinite structure on a profinite $G$-space is not as nice as in the non-equivariant case. The problem is that the underlying set of a profinite $G$-space is not a discrete $G$-space. Moreover, a profinite $G$-space $X$ is cofibrant if and only of the action is free on each $X_n$ as in the case for $\Sh_G$. Since all discrete $G$-spaces are cofibrant in the model structure of \cite{goerss}, cofibrations are not preserved by $G$-equivariant completion. So an analogue of Proposition \ref{adjcompletion} cannot be formulated for discrete and profinite $G$-spaces. 
\end{remark}
\subsection{Strongly complete profinite groups}
Now we return to abritrary $G$-spaces, but we require that $G$ has an additional property. A profinite group $G$ is called strongly complete in \cite{ribes}, if every subgroup of finite index is also open in $G$, or, in other words, if $G= \hat{G}$ as profinite groups. The profinite completion of an abstract group is itself strongly complete. But in general there are subgroups of finite index which are not open in the given topology. A discussion of this phenomenon is given in \cite{ribes} \S 4.2.\\
Serre has conjectured that every topologically finitely generated profinite group $G$ is strongly complete, where topologically finitely generated means that $G$ contains a dense finitely generated subgroup. He proved this conjecture for finitely generated pro-$p$-groups. Recently, Nikolov and Segal have proven the full conjecture for every finitely generated profinite group in \cite{niksegal}. The celebrated proof relies on the classification of finite simple groups.\\
The implication of strong completeness, which shows why it is interesting in this context, is the following. For a strongly complete profinite group $G$, every finite set $S$ with a $G$-action is also a continuous  discrete $G$-set. For, if $s$ is an element in $S$ and $Gs$ the orbit of $s$ under $G$ in $S$, then $G$ acts transitively on $Gs$ and hence there is bijection $G/G_s\cong Gs$, where $G_s$ denotes the stabilizer of $s$ in $G$. Since $Gs \subseteq S$ is a finite set, so is $G/G_s$, and, since $G$ is strongly complete, $G_s$ is open in $G$.\\
(One should note however that this does not imply that a strongly complete group is good in the sense of Serre. This is because the profinite completion of $G$ as a group is in general not equal to the profinite completion as a set. Hence the sets $\Hom_{\Eh}(G,S)$ and $\Hom_{\hEh}(\hat{G},S)$ can still be different for a finite set $S$.)\\
Hence when $G$ is strongly complete, every simplicial finite $G$-set is a simplicial discrete $G$-set. So profinite completion of any $G$-space $X$ can be defined more directly as in the case of discrete spaces. For any simplicial open $G$-invariant equivalence relation $R$ on $X$ with finite quotients, the finite set $X_n/R_n$ is finite and a continuous $G$-set for the discrete topology. The proof of Proposition \ref{compdiscrete} above shows that $\hat{X}_G$ is equal to the limit $\lim_{R\in \Rh_{G}(X)} X/R$, where $\Rh_{G}(X)$ denotes the set of $G$-invariant simplicial equivalence relations on $X$ such that the quotient $X/R$ is a simplicial finite set.\\
Arithmetically interesting examples of strongly complete groups are the absolute Galois groups of finite fields. More subtle examples provide the Galois groups of $p$-adic local fields, since they are finitely generated by the work of Jannsen and Wingberg \cite{jw}. Nevertheless, the absolute Galois group of a number field is in general not strongly complete as subgroups of finite index which are not open can be constructed in such groups. Another interesting example is provided by the  Morava stabilizer group of formal group laws in characteristic $p$ by the work of Ravenel \cite{ravenel}.\\ 
In order to understand the fundamental group of the $G$-completion of a space we recall the equivariant completion for groups. 
Let $\Gamma$ be a group that is equipped with a left $G$-action $\mu:G\times \Gamma \to \Gamma$ such that $\mu(g, \gamma_1 \gamma_2)=\mu(g,\gamma_1) \mu(g,\gamma_2)$ in $\Gamma$ for every $g\in G$ and $\gamma_1, \gamma_2 \in \Gamma$. We define the $G$-equivariant group completion of $\Gamma$ to be the limit of finite groups
$$\hat{\Gamma}_G:= \lim_{U_G} \Gamma/U_G$$
where $U_G$ runs through the normal subgroups of $\Gamma$ of finite index which are invariant under the action of $G$. It is clear that $\hat{\Gamma}_G$ is a profinite group with a $G$-action in the above sense. Since we restricted to the case that $G$ is strongly complete, each $\Gamma/U_G$ is a finite and hence discrete $G$-group. Thus $\hat{\Gamma}_G$ is a profinite group with a  continuous $G$-action and the map $\Gamma \to \hat{\Gamma}_G$ is universal for maps from $\Gamma$ into groups with this property. An example for this construction is given by the profinite fundamental group of a $G$-space.
\begin{prop}\label{Gfundamental}
Let $G$ be a strongly complete profinite group and $X$ a simplicial $G$-set. The profinite fundamental group of $\hat{X}_G$ is equal to the $G$-equivariant completion of the fundamental group of $X$, i.e. the canonical map $\hat{(\pi_1X)}_G \to \pi_1\hat{X}_G$ is an isomorphism of profinite $G$-groups.
\end{prop}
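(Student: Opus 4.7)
The plan is to exhibit both profinite $G$-groups as cofiltered limits over the same cofinal diagram of finite $G$-quotients of $\pi_1(X)$. I will assume $X$ is pointed and connected; the general case reduces to this componentwise. Since $G$ is strongly complete, Proposition \ref{compdiscrete} gives the presentation
\[
\hat X_G = \lim_{R \in \Rh_G(X)} X/R
\]
in which each $X/R$ is a simplicial finite $G$-set on which $G$ acts through a finite quotient. For each such quotient, Proposition \ref{profinitecompletion} identifies the profinite fundamental group of $X/R$ (viewed as an object of $\hSh$) with $\widehat{\pi_1(X/R)}$, and the functorial $G$-action descends to a continuous action on this profinite completion since $G$ already acts on $\pi_1(X/R)$ through a finite quotient. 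The first key step is to establish the cofiltered limit formula
\[
\pi_1(\hat X_G) = \lim_{R \in \Rh_G(X)} \widehat{\pi_1(X/R)}
\]
as profinite groups with continuous $G$-action. Combined with the functoriality of $\pi_1$ applied to the canonical $G$-equivariant map $X \to \hat X_G$, this formula and the universal property of $G$-equivariant group completion yield the canonical map $\hat{(\pi_1 X)}_G \to \pi_1(\hat X_G)$ of the statement.

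To check bijectivity I would match finite $G$-quotients on both sides. Every finite $G$-quotient of the target factors through some $\widehat{\pi_1(X/R)}$, hence through $\pi_1(X/R)$, and therefore arises as a finite $G$-quotient $\pi_1(X)/U_G$ for some $G$-invariant normal subgroup $U_G$ of finite index. For the converse, given $U_G$ with $K := \pi_1(X)/U_G$, consider the classifying cocycle $f : X \to \bar{W}K$ of the $G$-equivariant principal $K$-bundle over $X$ associated to $U_G$. Since $G$ acts on the finite group $K$ by automorphisms, $\bar{W}K$ is a simplicial finite $G$-set under the diagonal action on each $K^n$, and $f$ can be arranged to be strictly $G$-equivariant. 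The image of $f$ is then a $G$-invariant simplicial finite subset of $\bar{W}K$, so the induced $G$-equivariant surjection $X \twoheadrightarrow \mathrm{image}(f)$ defines an element $R \in \Rh_G(X)$; by construction the composition $\pi_1(X) \to \pi_1(X/R) \to \pi_1(\bar{W}K) = K$ is the projection $\pi_1(X) \twoheadrightarrow \pi_1(X)/U_G$, so $U_G$ arises as the pullback of a finite $G$-quotient of $\widehat{\pi_1(X/R)}$, and hence of $\pi_1(\hat X_G)$.

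The main obstacle I expect is justifying the displayed limit formula for $\pi_1(\hat X_G)$: this is a compatibility between the covering-space definition of the profinite fundamental group given in \cite{profinhom} and cofiltered limits of profinite spaces in $\hSh$, and ultimately rests on showing that every finite connected profinite cover of $\lim_R X/R$ is pulled back from a finite cover of some single $X/R$. A secondary technical point is arranging the classifying map $f : X \to \bar{W}K$ to be strictly $G$-equivariant rather than only $G$-equivariant up to homotopy; this can be handled because $\bar{W}K$ has a manifestly simplicial bar-style description on which the $G$-action induced by the automorphisms of $K$ acts diagonally on each $K^n$, and a $G$-equivariant cocycle representative can then be chosen from the $G$-equivariance of the underlying covering associated to $U_G$.
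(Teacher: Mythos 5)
Your reduction and the forward direction are fine in spirit, but the step you dismiss as a ``secondary technical point'' is a genuine gap: the strictly $G$-equivariant classifying map $f\colon X\to \bar{W}K$ need not exist, because the $G$-action on $X$ need not lift to a strict action on the covering associated to $U_G$ (the group acting on that covering is in general a possibly non-split extension of $G$ by $K$). Concretely, take $G=\Z/2$ (finite, hence strongly complete), $X$ a simplicial circle with $2n$ vertices on which $G$ acts freely by rotation through half the circle, $U_G=2\Z\subset \pi_1X=\Z$ (which is $G$-invariant since the action on $\pi_1$ is trivial), so $K=\Z/2$ with trivial $G$-action. A strictly $G$-equivariant map $X\to \bar{W}K$ then factors through $X/G$, and since $\pi_1X\to\pi_1(X/G)$ is the inclusion $2\Z\subset\Z$, the induced homomorphism $\pi_1X\to K$ is trivial: no strictly equivariant cocycle classifies the mod-$2$ covering. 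Equivalently, the connected double cover of $X$ admits no $\Z/2$-action lifting the given one (only a $\Z/4$-action), so ``the $G$-equivariant principal $K$-bundle associated to $U_G$'' does not exist as a strict object. Note that the proposition is not threatened in this example --- here $X$ is already a simplicial finite $G$-set, so one may take $R$ trivial and realize $K$ as a quotient of $\widehat{\pi_1(X/R)}=\hat{\Z}$ --- but your mechanism for producing the required $R$ and quotient fails, and this was exactly the step carrying the converse inclusion (injectivity of $\widehat{(\pi_1X)}_G\to\pi_1\hat{X}_G$). What you actually need is only that for each $G$-invariant $U_G$ there is some $R\in\Rh_G(X)$ with $\ker(\pi_1X\to\pi_1(X/R))\subseteq U_G$, and the quotient map $\widehat{\pi_1(X/R)}\to K$ need not be induced by any equivariant map of spaces into $\bar{W}K$; your argument conflates these.

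For comparison, the paper's proof avoids classifying maps and strictification altogether: it argues directly from the definition of the profinite fundamental group as the limit of the finite automorphism groups of the finite Galois coverings, observes that these coverings (and hence their automorphism groups) inherit the $G$-action, and identifies that limit simultaneously with $\pi_1\hat{X}_G$ and with the $G$-equivariant completion of $\pi_1X$. The point is that one only needs an induced $G$-action on the automorphism group of a Galois covering, not a strict $G$-action on the covering itself or an equivariant cocycle. Your other flagged step --- the continuity formula $\pi_1(\hat{X}_G)=\lim_R\widehat{\pi_1(X/R)}$ --- is true and is essentially the covering-space descent built into the definition in the cited reference, so it is a fillable omission; the equivariant classifying map is the step that would actually fail and needs to be replaced by a covering-theoretic (or groupoid-level) argument, which would also sidestep the basepoint issue your pointed-connected reduction quietly assumes (a $G$-fixed vertex need not exist).
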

\begin{proof}
The fundamental group of a profinite space is defined as the limit of the finite automorphism groups of the finite Galois coverings of $X$. Each covering inherits an action by $G$ from $X$. Hence $G$ also acts continuously on the finite automorphism groups. Their limit is the profinite fundamental group of $\hat{X}_G$ by definition, but also the $G$-completion of the fundamental group $\pi_1X$ of the simplicial set $X$. 
\end{proof}

We equip the category of simplicial $G$-sets $\Sh_{G}$ with the model structure of \cite{gj} V \S 2. In this model structure a map $f:X\to Y$ is a weak equivalence (respectively fibration) in $\Sh_{G}$ if its underlying map in $\Sh$ is a weak equivalence (respectively fibration); and $f$ is a cofibration in $\Sh_{G}$ if $f$ is a monomorphism and $G$ acts freely on $Y_n - f(X_n)$ for each $ n\geq 0$.
There is the following partial analogue of Proposition \ref{adjcompletion}.
\begin{prop}\label{adjGcompletion}
Let $G$ be a strongly complete profinite group. The forgetful functor $|\cdot|:\hShg \to \Sh_{G}$ preserves fibrations and weak equivalences between fibrant objects.
\end{prop}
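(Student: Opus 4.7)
The plan is to show that the forgetful functor factors compatibly through the corresponding non-equivariant forgetful functor, and then invoke Proposition \ref{adjcompletion}(2) together with the definition of the model structures on $\hShg$ and $\Sh_G$.

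First, I would observe that if $U_G:\hShg\to\Sh_G$ denotes the functor of the statement and $U:\hSh\to\Sh$ denotes the non-equivariant forgetful functor, then there is a commutative square
$$
\xymatrix{
\hShg \ar[r]^{U_G} \ar[d] & \Sh_G \ar[d] \\
\hSh \ar[r]_{U} & \Sh,
}
$$
where the vertical arrows forget the $G$-action. By Theorem \ref{Gmodelunstable}, a map $f$ in $\hShg$ is a fibration (respectively weak equivalence) if and only if its image in $\hSh$ under the left vertical arrow is. By the description of the Goerss-Jardine model structure on $\Sh_G$ recalled just before the proposition, a map $g$ in $\Sh_G$ is a fibration (respectively weak equivalence) if and only if its image in $\Sh$ under the right vertical arrow is. So both notions in $\hShg$ and in $\Sh_G$ are detected on the underlying simplicial set, and the proposition reduces to the corresponding statement for $U:\hSh\to\Sh$.

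For fibrations this is immediate from Proposition \ref{adjcompletion}(2): if $f$ is a fibration in $\hShg$, then its underlying map in $\hSh$ is a fibration, hence so is its underlying map in $\Sh$ by Proposition \ref{adjcompletion}(2), which by the description of the model structure on $\Sh_G$ says that $U_Gf$ is a fibration. For weak equivalences between fibrant objects the only extra point to check is that a fibrant object of $\hShg$ has fibrant underlying profinite space; but this follows from the very definition, since $X\to *$ is a fibration in $\hShg$ if and only if $X\to *$ is a fibration in $\hSh$ (Theorem \ref{Gmodelunstable}). Thus a weak equivalence $f:X\to Y$ between fibrant objects of $\hShg$ has underlying map a weak equivalence between fibrant objects in $\hSh$; Proposition \ref{adjcompletion}(2) then gives that this underlying map is a weak equivalence in $\Sh$, i.e.\ that $U_Gf$ is a weak equivalence in $\Sh_G$.

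I do not expect any real obstacle here: the argument is a pure formality in matching the definitions of the three model structures, and all nontrivial content is concentrated in Proposition \ref{adjcompletion}(2). The hypothesis that $G$ be strongly complete enters only as a standing assumption of this subsection that makes the preceding discussion of $G$-equivariant completion behave well; it is not actually needed in this particular argument, since fibrations and weak equivalences in $\hShg$ and in $\Sh_G$ are both detected on the underlying non-equivariant object.
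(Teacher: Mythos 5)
Your proposal is correct and follows the same route as the paper: both model structures detect fibrations and weak equivalences on the underlying non-equivariant object, so the statement reduces directly to Proposition \ref{adjcompletion}(2). Your extra observation that strong completeness is not actually used in this argument is accurate; the hypothesis is a standing assumption of the subsection rather than an ingredient of this proof.
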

\begin{proof}
In both $G$-model structures the weak equivalences and fibrations are determined by the underlying non-equivariant maps. The assertion is then a consequence of the second part of Proposition \ref{adjcompletion}.
\end{proof}
\subsection{$G$-equivariant fibrant replacements}
Let $G$ be again an arbitrary profinite group. We want to define an explicit fibrant replacement functor in the model structure on $\hShg$ of Theorem \ref{Gmodelunstable}. After the discussion in the two previous sections one would expect that we had to significantly modify the construction for the fibrant replacement in $\hSh$ in order make it $G$-equivariant and continuous. But it turns out that the only necessary change is that we have to decompose $X$ with respect to its $G$-invariant  equivalence relations. The continuity comes for free (as we will see below, in the true sense of the word).\\ 
\begin{remark}\label{5.6.1}
We recall the following fact from \cite{ribes}, Remark 5.6.1. Let $G$ be a profinite group acting continuously on a profinite set $S$. It induces a homomorphism $G \to \mathrm{Homeo}(S)$ from $G$ to the group of homeomorphisms of $S$. When we equip $\mathrm{Homeo}(S)$ with the compact-open topology, this homomorphism is continuous if and only if the action of $G$ on $S$ is continuous by \cite{bourb}, X 3.4 Th\'eor\`eme 3. If $S$ is finite and discrete, the finite group $\mathrm{Homeo}(S)$ is just a discrete group.
\end{remark}
This implies the following crucial observation for free profinite groups. 
\begin{lemma}\label{5.6.2}
Let $G$ be a profinite group acting on a profinite set $S$. Then this action extends to a continuous action of $G$ on the free profinite group $\hat{F}(S)$ on $S$. 
\end{lemma}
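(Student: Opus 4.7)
The plan is to reduce continuity of the extended action to the case of free profinite groups on finite $G$-sets, where the action factors through a finite discrete quotient of $G$.

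First, I would build the underlying homomorphism $G\to \Aut(\hat{F}(S))$ via the universal property. For each $g\in G$, the composite $S\xrightarrow{\mu_g} S\xrightarrow{\iota}\hat{F}(S)$ is continuous and its image topologically generates $\hat{F}(S)$, so by the universal property of $\hat{F}(S)$ it extends to a unique continuous endomorphism $\hat{\mu}_g\colon \hat{F}(S)\to \hat{F}(S)$. Uniqueness plus the identities $\hat{\mu}_e=\id$, $\hat{\mu}_{gh}=\hat{\mu}_g\circ\hat{\mu}_h$ (verified on generators) show that each $\hat{\mu}_g$ is a continuous automorphism and that $g\mapsto\hat{\mu}_g$ is a group homomorphism. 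By Remark \ref{5.6.1}, it remains to check that the resulting action map $G\times \hat{F}(S)\to \hat{F}(S)$ is continuous.

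Next, I would use Lemma \ref{5.6.4} to write $S=\lim_i S_i$ as a cofiltered limit of finite $G$-sets $S_i$ with $G$-equivariant transition maps. By Proposition 3.3.9 of \cite{ribes}, already invoked in the paper, this gives $\hat{F}(S)=\lim_i \hat{F}(S_i)$ as profinite groups, and by the first paragraph applied functorially at each level, the compatible system of continuous homomorphisms $G\to\Aut(\hat{F}(S_i))$ assembles, in the limit, into the homomorphism of the first paragraph.

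Since each $S_i$ is a finite set carrying a continuous $G$-action, the kernel $U_i$ of $G\to \mathrm{Homeo}(S_i)$ is an open normal subgroup, and the induced action on $\hat{F}(S_i)$ factors through the finite discrete group $G/U_i$: any $u\in U_i$ fixes every generator of $\hat{F}(S_i)$, hence acts as the identity by the uniqueness clause of the universal property. The action map $G\times \hat{F}(S_i)\to \hat{F}(S_i)$ thus factors as
\[
G\times \hat{F}(S_i)\longrightarrow G/U_i\times \hat{F}(S_i)\longrightarrow \hat{F}(S_i),
\]
and the second arrow is continuous because $G/U_i$ is finite discrete, so $G/U_i\times \hat{F}(S_i)$ is a finite disjoint union of copies of $\hat{F}(S_i)$ on each of which the action restricts to a single continuous automorphism.

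Finally, I would deduce the general case by passing to the limit. For each projection $\hat{F}(S)\to \hat{F}(S_i)$, the composite
\[
G\times \hat{F}(S)\xrightarrow{\id\times\mathrm{pr}_i} G\times \hat{F}(S_i)\longrightarrow \hat{F}(S_i)
\]
is continuous by the previous paragraph. Since $\hat{F}(S)=\lim_i \hat{F}(S_i)$ carries the limit topology, the universal property of the limit produces the desired continuous map $G\times \hat{F}(S)\to \hat{F}(S)$, which by construction agrees on the generators $\iota(S)$ with the action built via the universal property, hence everywhere. The main subtlety is simply the compatibility of the free profinite group functor with the cofiltered limit $S=\lim_i S_i$ and the check that the $G$-action on each $\hat{F}(S_i)$ is continuous; both are handled by the factorisation through the finite discrete quotient $G/U_i$.
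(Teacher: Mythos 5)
Your proof is correct, but it takes a genuinely different route from the paper. The paper stays entirely at the level of $S$ itself: it extends each $g$ to a continuous automorphism $\bar{\varphi}_g$ of $\hat{F}(S)$ by the universal property (as you do), and then deduces continuity of the action map by comparing $\mathrm{Homeo}(S)$ and $\Aut(\hat{F}(S))$ with their compact-open topologies and invoking the Bourbaki criterion of Remark \ref{5.6.1} twice: the action on $S$ is continuous iff $G\to\mathrm{Homeo}(S)$ is continuous, and likewise for $\Aut(\hat{F}(S))$, so continuity transfers. You instead reduce to finite $G$-sets: you decompose $S=\lim_i S_i$ into finite $G$-quotients (the set-level case of Lemma \ref{5.6.4}, i.e.\ Lemma 5.6.4 of \cite{ribes}), use $\hat{F}(S)=\lim_i\hat{F}(S_i)$ (Proposition 3.3.9 of \cite{ribes}, which the paper also quotes), observe that the action on each $\hat{F}(S_i)$ factors through the finite discrete quotient $G/U_i$ by the open kernel $U_i$ of $G\to\mathrm{Homeo}(S_i)$ and is therefore trivially continuous, and then assemble a continuous map $G\times\hat{F}(S)\to\hat{F}(S)$ by the universal property of the limit, identifying it with the algebraic action by comparing on the topologically generating set $\iota(S)$. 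What your approach buys is a more elementary and self-checking argument: it avoids the compact-open topology entirely, and in particular it does not need the paper's somewhat delicate assertion that $\mathrm{Homeo}(S)$ coincides with $\Aut(\hat{F}(S))$ (all one really needs there is a continuous homomorphism between them, which itself requires a small verification); it also exhibits the action concretely as a limit of actions through finite quotients of $G$, in the same spirit as the later construction $\hat{\Gamma}X=\lim_{R_G}\hat{\Gamma}(X/R_G)$. The price is that you must invoke the $G$-equivariant decomposition and the compatibility of the free profinite group functor with cofiltered limits, together with the (routine) equivariance of the transition maps $\hat{F}(S_i)\to\hat{F}(S_j)$ and of the projections, all of which you do address. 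One small remark: your appeal to Remark \ref{5.6.1} at the end of your first step is not actually needed, since continuity of the action map $G\times\hat{F}(S)\to\hat{F}(S)$ is exactly what is to be proved by definition; this is harmless.
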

\begin{proof}
The universal property of $\hat{F}(S)$ implies two things. First, the action of $G$ on $S$ induces an action on $\hat{F}(S)$ since the continuous map $\varphi_g:= \iota \circ g: S \to \hat{F}(S)$ induces a unique continuous group homomorphism $\bar{\varphi}_g:\hat{F}(S) \to \hat{F}(S)$ such that the diagram
$$\xymatrix{
S \ar[d]_{\iota} \ar[r]^g & S \ar[d]^{\iota} \\
\hat{F}(S) \ar[r]_{\bar{\varphi}_g} & \hat{F}(S)}$$
commutes. 
Second, there is an equality $\mathrm{Homeo}(S)=\Aut(\hat{F}(S))$ between the space $\mathrm{Homeo}(S)$ and the space of continuous automorphisms of $\hat{F}(S)$ with compact-open topology. By Remark \ref{5.6.1}, the action of $G$ on $S$ is continuous if and only if the homomorphism $G\to \mathrm{Homeo}(S)$ is continuous. By the same argument of \cite{bourb}, X 3.4 Th\'eor\`eme 3, the action of $G$ on $\Aut(\hat{F}(S))$ is continuous if and only if the induced homomorphism $G\to \Aut(\hat{F}(S))$ is continuous. This shows the assertion.
\end{proof}

We are now prepared for the fibrant replacement in $\hShg$. Recall that when $X$ is a reduced simplicial finite set, the first step in the construction of the fibrant replacement in $\hSh$ is to apply the free profinite loop group construction on $X$, which is the simplicial profinite group $\hat{\Gamma}X$ given in degree $n$ by the free profinite group on the finite set $X_{n+1} - s_0X_n$. 
Now Lemma \ref{5.6.2} shows that when $X$ is a simplicial finite discrete $G$-set, then $\hat{\Gamma}X$ is a simplicial profinite $G$-group. Hence the profinite classifying space $\bar{W}\hat{\Gamma}X$ is a profinite $G$-space and the canonical map $\heta:X \to \bar{W}\hat{\Gamma}X$ is a map in $\hShg$. Since a map is a weak equivalence (respectively fibration) in $\hShg$ if its underlying map in $\hSh$ is a weak equivalence (respectively fibration), $\heta$ is a functorial fibrant replacement in $\hShg$ by Theorem \ref{fibrep}.\\
Now let $X$ be an arbitrary simplicial finite set. In \S\,\ref{secfibrep}, we took the free profinite loop groupoid $\hat{\Gamma}X$ on $X$. It inherits a $G$-action from $X$, where an action on a groupoid can be described as follows. Let $\Gamma$ be a groupoid. A $G$-action on $\Gamma$ is a $G$-action $\mu_0$ on the set of objects of $\Gamma$ and an action $\mu_1$ of $G$ on the set of morphisms of $\Gamma$ such that $\mu_1(g, \gamma_1 \gamma_2)=\mu_1(g,\gamma_1) \circ \mu_1(g,\gamma_2)$ as morphisms of $\Gamma$ for every $g\in G$ and all morphisms $\gamma_1$ and $\gamma_2$ of $\Gamma$.\\ 
Since $\hat{\Gamma}X$ is the free profinite groupoid on $X$, the same argument as for the free profinite group on a set shows that the induced action of $G$ on $\hat{\Gamma}X$ is in fact continuous, i.e. $\mu_0$ and $\mu_1$ act continuously on the set of objects and morphisms of $\hat{\Gamma}X$ respectively. So the classifying space $\bar{W}\hat{\Gamma}X$ is again a profinite $G$-space whose underlying profinite space is fibrant.\\ 
%
Finally, for an arbitrary profinite $G$-space $X$, we use that it is isomorphic in $\hShg$ to the limit $\lim_{R_G\in \Rh_G(X)} X/R_G$ of simplicial finite $G$-sets by Lemma \ref{5.6.4}, where $\Rh_G(X)$ denotes the set of simplicial $G$-invariant open equivalence relations on $X$.  
We construct the free simplicial profinite groupoid on $X$ by 
$$\hat{\Gamma}X = \lim_{R_G \in \Rh_G(X)} \hat{\Gamma}(X/R_G).$$
By the previous argument, $G$ acts continuously on this free profinite groupoid. 
The application of the classifying space functor $\bar{W}$ yields a fibrant profinite $G$-space $\bar{W}\hat{\Gamma}X$ equipped with a canonical map 
$$\heta: X\to \bar{W}\hat{\Gamma}X=\lim_{R_G \in \Rh_G(X)} \bar{W}\hat{\Gamma}(X/R_G).$$
Since the construction of $\heta$ is a natural in $X$, this yields a functorial fibrant replacement in $\hShg$. Summarizing this discussion, we have proven the following result.
\begin{theorem}\label{Gfibrep}
Let $G$ be a profinite group and let $X$ be a profinite $G$-space. The map $\heta: X \to \bar{W}\hat{\Gamma}X$ is a weak equivalence in $\hShg$. The profinite $G$-space $\bar{W}\hat{\Gamma}X$ is fibrant in $\hSh$ and $\hShg$. Hence $\heta$ defines a functorial fibrant replacement in $\hShg$.
\end{theorem}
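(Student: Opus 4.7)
The plan is to leverage the non-equivariant fibrant replacement of Theorem \ref{fibrep} together with the fact that the $G$-model structure of Theorem \ref{Gmodelunstable} is transferred from the underlying model structure on $\hSh$. First I would invoke Lemma \ref{5.6.4} to write $X$ as the $G$-equivariant limit $X = \lim_{R_G \in \Rh_G(X)} X/R_G$ of simplicial finite $G$-sets, so that every construction can be checked on each $X/R_G$ and then passed to the limit.

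Next, for each finite $G$-set quotient $X/R_G$, Lemma \ref{5.6.2} together with its immediate extension to free profinite groupoids on $G$-sets (obtained by applying the same universal property argument to the generating sets $(X/R_G)_{n+1}-s_0(X/R_G)_n$ in each degree) shows that $\hat{\Gamma}(X/R_G)$ carries a continuous $G$-action on its sets of objects and morphisms, and therefore forms a simplicial profinite $G$-groupoid. Taking the cofiltered limit $\hat{\Gamma}X = \lim_{R_G} \hat{\Gamma}(X/R_G)$ preserves continuity of the action, and because $\bar{W}$ is built dimensionwise from finite products of morphism sets and commutes with cofiltered limits, the profinite space $\bar{W}\hat{\Gamma}X = \lim_{R_G} \bar{W}\hat{\Gamma}(X/R_G)$ inherits a continuous $G$-action, i.e.\,it lies in $\hShg$. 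Naturality of the canonical maps $\heta/R_G: X/R_G \to \bar{W}\hat{\Gamma}(X/R_G)$ in $X/R_G$ guarantees that each of them is $G$-equivariant, and passing to the limit yields a $G$-equivariant map $\heta: X \to \bar{W}\hat{\Gamma}X$ in $\hShg$.

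Finally I would appeal to Theorem \ref{Gmodelunstable}, which asserts that a map in $\hShg$ is a weak equivalence (respectively a fibration) exactly when its underlying map in $\hSh$ is one. The underlying morphism of $\heta$ is precisely the morphism of Theorem \ref{fibrep}, which is a trivial cofibration with fibrant target; thus $\heta$ is a weak equivalence in $\hShg$ and $\bar{W}\hat{\Gamma}X$ is fibrant in both $\hSh$ and $\hShg$. Naturality of $\heta$ in the variable $X$ upgrades this to a functorial fibrant replacement.

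The only genuine obstacle is the verification that all constructions involved, namely the free profinite groupoid $\hat{\Gamma}$ and the classifying space functor $\bar{W}$, transport a continuous $G$-action to a continuous $G$-action. This is exactly the content of Lemma \ref{5.6.2} and Remark \ref{5.6.1}, so the proof essentially amounts to correctly assembling these previously established facts, with no new homotopical input beyond the non-equivariant theorem.
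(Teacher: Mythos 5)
Your proposal is correct and follows essentially the same route as the paper: decompose $X$ via Lemma \ref{5.6.4}, use Lemma \ref{5.6.2} (and the same universal-property argument for free profinite groupoids) to see that $\hat{\Gamma}$ and $\bar{W}$ transport continuous $G$-actions to continuous $G$-actions, and then transfer Theorem \ref{fibrep} to $\hShg$ through Theorem \ref{Gmodelunstable}. The only point left implicit (in the paper as well) is that the limit over the $G$-invariant relations $\Rh_G(X)$ agrees with the non-equivariant construction because $\Rh_G(X)$ is cofinal in $\Rh(X)$, which is exactly what the proof of Lemma \ref{5.6.4} supplies.
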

%
%
\begin{cor}
Let $G$ be a profinite group and $X\in \Sh_{G}$. There is a $G$-equivariant profinite completion functor that sends $X$ to a limit of finite spaces which are simplicial finite $G$-sets:
$$X\mapsto \hat{X}_G \mapsto \hat{X}_{f,G}:= \lim_{n,R_G,U} \cosk_n \bar{W}(\Gamma(X/R_G))/U \in \hShg.$$ 
\end{cor}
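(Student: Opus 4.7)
The plan is to compose the $G$-equivariant completion functor $X\mapsto \hat X_G$ from Definition~4.4 with the explicit $G$-equivariant fibrant replacement $\heta: Y \to \bar W\hat{\Gamma}Y$ from Theorem~\ref{Gfibrep}, and then to apply the Postnikov--coskeleton decomposition already used in \S\ref{secrelation}. So the functor in question is literally the composite
\[
X \;\longmapsto\; \hat X_G \;\longmapsto\; \bar W\hat{\Gamma}\hat X_G \;\longmapsto\; \{\cosk_n \bar W((\Gamma(\hat X_G/R_G))/U)\}_{n,R_G,U},
\]
and the content of the corollary is that each entry of this inverse system is a simplicial finite $G$-set which, at the same time, is a finite space in the homotopy-theoretic sense.

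First I would apply Definition~4.4 to obtain $\hat X_G\in \hShg$, and invoke Lemma~\ref{5.6.4} to write it as an inverse limit $\hat X_G = \lim_{R_G} \hat X_G/R_G$ indexed over $G$-invariant simplicial open equivalence relations $R_G$ with simplicial finite quotient. Next I would apply the construction preceding Theorem~\ref{Gfibrep}: for each simplicial finite $G$-set $Y = \hat X_G/R_G$, Lemma~\ref{5.6.2} and its groupoid analogue imply that the $G$-action on $Y$ extends to a continuous $G$-action on the free profinite loop groupoid $\hat{\Gamma}Y$, hence on $\bar W\hat{\Gamma}Y$. Taking the limit gives the equivariant fibrant replacement $\heta:\hat X_G \to \bar W\hat{\Gamma}\hat X_G$.

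Third, I would run the decomposition of \S\ref{secrelation} $G$-equivariantly. For a simplicial profinite $G$-groupoid $\Gamma(Y)$ one writes $\Gamma(Y) = \lim_U \Gamma(Y)/U$ over $G$-invariant simplicial normal subgroupoids $U$ with simplicial finite quotient; applying $\bar W$ and then the coskeleton tower gives a tower of simplicial finite $G$-sets whose limit is $\bar W\hat{\Gamma}Y$. Lemma~\ref{finitehomgroups} combined with the computation $\pi_{n+1}\bar W\Lambda = \pi_n\Lambda = H_n(N\Lambda)$ (for a simplicial group $\Lambda$) applied to each component of the groupoid $(\Gamma(Y))/U$ shows that every $\bar W((\Gamma(Y))/U)$ has finite homotopy groups, so its Postnikov sections $\cosk_n$ are genuine finite spaces. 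Combining all three indexing variables produces the stated functor
\[
\hat X_{f,G} \;=\; \lim_{n,R_G,U}\;\cosk_n \bar W\bigl((\Gamma(\hat X_G/R_G))/U\bigr),
\]
and the natural map $\hat X_G \to \hat X_{f,G}$ is a weak equivalence in $\hShg$ by Theorem~\ref{Gfibrep} together with the homotopy invariance of limits in $\hSh$ (Proposition~2.14 of \cite{profinhom}).

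The main obstacle I anticipate is purely bookkeeping: one must verify that $G$ acts continuously at each step of the three-fold inverse limit, and that the indexing categories of $(R_G, U, n)$ really are cofiltered. Continuity is handed to us by Lemma~\ref{5.6.2} applied once to the free profinite groupoid $\hat{\Gamma}(\hat X_G/R_G)$ and then inherited by the classifying space and coskeleton functors, both of which are computed levelwise and so commute with the limit over $U$ and the $G$-action. Functoriality of the whole composite in $X\in \Sh_G$ is immediate from the functoriality of the three building blocks (Definition~4.4, Theorem~\ref{Gfibrep}, and $\cosk_n$), which completes the construction.
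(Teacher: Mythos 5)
Your proposal is correct and follows essentially the route the paper intends: the corollary is exactly the composite of the $G$-equivariant completion of Definition 4.4 with the equivariant fibrant replacement of Theorem \ref{Gfibrep}, decomposed into coskeleta of classifying spaces of finite quotient groupoids as in \S\ref{secrelation}, with finiteness of the homotopy groups supplied by Lemma \ref{finitehomgroups} and continuity of the $G$-action by Lemma \ref{5.6.2} and its groupoid analogue. Your added care about restricting to $G$-invariant subgroupoids $U$ and noting that $\bar W$ and $\cosk_n$ are computed levelwise (hence preserve the continuous $G$-action) fills in precisely the bookkeeping the paper leaves implicit.
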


\begin{thebibliography}{99}
%
\bibitem{artinmazur} 
M. Artin, B. Mazur, 
Etale homotopy, 
Lecture Notes in Mathematics, 100, Springer, 1969.
\bibitem{boggi}
M. Boggi,
Profinite Teichm\"uller theory,
Math. Nachr. 279 (2006), 953-987.
\bibitem{boggi2}
M. Boggi,
Faithfulness of Galois representations associated to hyperbolic curves,
preprint, 2009, arXiv:0910.4305v2.
\bibitem{bouskan} 
A. K. Bousfield, D. M. Kan, 
Homotopy limits, Completions and Localizations, 
Lecture Notes in Mathematics, 304, Springer-Verlag, 1972.
\bibitem{bourb} 
N. Bourbaki, 
Topologie G\'en\'erale, 
Hermann, 1971.
\bibitem{dwyfried} 
W. G. Dwyer, E. M. Friedlander, 
Algebraic and Etale K-Theory,
Trans. Amer. Math. Soc. 292 (1985), 247-280.
\bibitem{dk2} 
W. G. Dwyer, D. M. Kan, 
An obstruction theory for diagrams of simplicial sets,
 Nederl. Akad. Wetensch. Indag. Math. 46 (1984), no. 2, 139Ð146.
\bibitem{dk} 
W. G. Dwyer, D. M. Kan, 
Homotopy theory and simplicial groupoids, 
Nederl. Akad. Wetensch. Indag. Math. 46 (1984), no. 4, 379-385.
\bibitem{fried} 
E. M. Friedlander, 
Etale homotopy of simplicial schemes,
Annals of Mathematical Studies, 104, Princeton University Press, 1982.
\bibitem{goerss} 
P. G. Goerss, 
Homotopy Fixed Points for Galois Groups,
in The Cech centennial (Boston, 1993), 
Contemp. Math. 181, 1995, pp. 187-224.
\bibitem{goersscomp} 
P. G. Goerss, 
Comparing completions of a space at a prime, 
In: Homotopy theory via algebraic geometry and group representations, 
Evanston, IL, 1997,  
Contemp. Math. 220, Amer. Math. Soc., Providence, RI, 1998, pp.65-102.
\bibitem{gj} P. G. Goerss, J. F. Jardine, 
Simplicial Homotopy Theory, 
Birkh\"auser Verlag, 1999.
\bibitem{calclim} 
D. C. Isaksen, 
Calculating limits and colimits in pro-categories,
Fund. Math. 175, 2002, 175-194.
\bibitem{compofpro} 
D. C. Isaksen, 
Completions of pro-spaces,
Math. Zeit. 250, 2005, 113-143.
\bibitem{jw} 
U. Jannsen, K. Wingberg, 
Die Struktur der absoluten Galoisgruppe $p$-adischer Zahlkšrper, 
Invent. Math. 70 (1982), 71-98. 
\bibitem{lochak}
P. Lochak,
Results and conjectures in profinite Teichm\"uller theory,
preprint, 2011, available at http://www.math.jussieu.fr/~lochak/.
\bibitem{may} 
J. P. May, 
Simplicial objects in algebraic topology,
Van Nostrand Math. Studies, 11, 1967.
\bibitem{ensprofin} 
F. Morel, 
Ensembles profinis simpliciaux etinterpr\'etation g\'eom\'etrique du foncteur T, 
Bull. Soc. Math. France 124 (1996), 347-373.
\bibitem{niksegal} 
N. Nikolov, D. Segal, 
On finitely generated profinite groups. I. Strong completeness and uniform bounds, 
Ann. of Math. 165 (2007), 171-238. 
\bibitem{profinhom} 
G. Quick, 
Profinite homotopy theory, 
Doc. Math. 13 (2008), 585-612.
\bibitem{gspaces} 
G. Quick, 
Continuous group actions on profinite spaces, 
J. Pure Appl. Algebra 215 (2011), 1024-1039.
\bibitem{homalg} 
D. G. Quillen, 
Homotopical algebra,
Lecture Notes in Mathematics, 43, Springer-Verlag, 1967.
\bibitem{quillen2} 
D. G. Quillen, 
An application of simplicial profinite groups, 
Comment. Math. Helv. 44 (1969), 45-60.
\bibitem{ravenel}
D. Ravenel,
The Structure of Morava Stabilizer Algebras,
Invent. Math. 37 (1976), 109-120.
\bibitem{rector} 
D. L. Rector, 
Homotopy theory of rigid profinite spaces I, 
Pacif. J. of Math. 85 (1979), 413-445.
\bibitem{ribes} 
L. Ribes, P. Zalesskii, 
Profinite Groups, 
Ergebnisse der Mathematik und ihrer Grenzgebiete, 40, Springer Verlag, 2000.
\bibitem{serre} 
J. P. Serre, 
Cohomologie Galoisienne, 
Lecture Notes in Mathematics, 5, Springer Verlag, 1965.
\bibitem{sullivan} 
D. Sullivan, 
Genetics of Homotopy Theory and the Adams Conjecture, 
Ann. of Math. 100 (1974), 1-79.
%
\end{thebibliography}
\end{document}